\documentclass[pdflatex,sn-mathphys-num]{sn-jnl}

\usepackage{graphicx}%
\usepackage{multirow}%
\usepackage{amsmath,amssymb,amsfonts}%
\usepackage{amsthm}%
\usepackage{mathrsfs}%
\usepackage[title]{appendix}%
\usepackage{textcomp}%
\usepackage{manyfoot}%
\usepackage{booktabs}%
\usepackage{algorithm}%
\usepackage{algorithmicx}%
\usepackage{algpseudocode}%
\usepackage{listings}%

\usepackage{siunitx}

\usepackage{amsmath,amsthm,amsfonts,amssymb,bm}
\usepackage{mathtools}
\usepackage{enumitem}\setitemize{leftmargin=5mm}
\usepackage{accents}
\usepackage{color}
\usepackage{booktabs}
\usepackage{subcaption}
\usepackage[usenames,dvipsnames,svgnames,table]{xcolor}
\usepackage[capitalize]{cleveref}

\usepackage{csvsimple}

\usepackage{tikz}
\usetikzlibrary{calc}

\usepackage{todonotes}
\usepackage{pgfplotstable}

\theoremstyle{thmstyleone}%
\newtheorem{theorem}{Theorem}
\newtheorem{lemma}[theorem]{Lemma}

\newtheorem{assumption}[theorem]{Assumption}

\theoremstyle{thmstyletwo}%
\newtheorem{remark}{Remark}%

\theoremstyle{thmstylethree}%
\newtheorem{definition}{Definition}%

\usepackage{verbatim}

\usepackage{stmaryrd} %
\usepackage{pgf,tikz}
\usetikzlibrary{spy,matrix, calc, arrows,shapes,decorations}
\usepackage{tkz-euclide}
\usetikzlibrary{positioning,arrows,calc,decorations.markings,math,arrows.meta}
\usepackage{pgfplots} 
\usepgfplotslibrary{groupplots}
\usepackage{csvsimple} 
\usepackage{siunitx} %
\usepackage{tabularx}
\usepackage{booktabs}
\usepackage{multirow}
\usepackage[normalem]{ulem}
\useunder{\uline}{\ul}{}

\pgfplotscreateplotcyclelist{noMarkPlot}{%
 teal,every mark/.append style={solid,fill=teal!80!black},mark=*,mark size=0pt\\
 red!70!white,every mark/.append style={solid,fill=red!80!black},mark=pentagon*,mark size=0pt\\
 ForestGreen,every mark/.append style={solid,fill=ForestGreen!80!black},mark=triangle*,mark size=0pt\\
}

\pgfplotscreateplotcyclelist{barcolors}{%
teal,fill=teal,fill opacity=0.7,thick,postaction={pattern=crosshatch dots,pattern color=teal}\\
red!70!white,fill=red!70!white,fill opacity=0.7,thick,postaction={pattern=crosshatch dots, pattern color=red!70!white}\\
ForestGreen,fill=ForestGreen,fill opacity=0.7,thick,postaction={pattern=crosshatch dots, pattern color=ForestGreen}\\
cyan!60!black,fill=cyan!60!black,fill opacity=0.7,thick,postaction={pattern=crosshatch dots, pattern color=cyan!60!black}\\
orange,fill=orange,fill opacity = 0.7,thick,postaction={pattern=crosshatch dots, pattern color=orange}\\
}

\pgfplotscreateplotcyclelist{MachNum}{%
 teal,every mark/.append style={solid,fill=teal!80!black},mark=*,mark size=2.5pt\\
 teal,every mark/.append style={solid,fill=teal!80!black},mark=*,mark size=2.5pt\\
 ForestGreen,every mark/.append style={solid,fill=ForestGreen!80!black},mark=triangle*,mark size=2.5pt\\
 ForestGreen,every mark/.append style={solid,fill=ForestGreen!80!black},mark=triangle*,mark size=2.5pt\\
}

\pgfplotscreateplotcyclelist{LiftvsSIP}{%
 teal,every mark/.append style={solid,fill=teal!80!black},mark=*,mark size=2.5pt,opacity=.8\\
 orange,every mark/.append style={solid,fill=orange!80!black},mark=square*,mark size=2.5pt,opacity=.8\\
 ForestGreen,every mark/.append style={solid,fill=ForestGreen!80!black},mark=triangle*,mark size=2.5pt,opacity=.8\\
 red!70!white,every mark/.append style={solid,fill=red!80!black},mark=pentagon*,mark size=2.5pt\\
}

\pgfplotscreateplotcyclelist{ConvPlot}{%
 teal,every mark/.append style={solid,fill=teal!80!black},mark=*,mark size=2.5pt\\
 red!70!white,every mark/.append style={solid,fill=red!80!black},mark=pentagon*,mark size=2.5pt\\
 ForestGreen,every mark/.append style={solid,fill=ForestGreen!80!black},mark=triangle*,mark size=2.5pt\\
 cyan!60!black,every mark/.append style={solid,fill=cyan!80!black},mark=diamond*,mark size=2.5pt\\
}

\pgfplotscreateplotcyclelist{unfmixed1}{%
 teal,every mark/.append style={solid,fill=teal!80!black},mark=*,mark size=2.5pt\\
 red!70!white,every mark/.append style={solid,fill=red!80!black},mark=pentagon*,mark size=2.5pt\\
 ForestGreen,every mark/.append style={solid,fill=ForestGreen!80!black},mark=triangle*,mark size=2.5pt\\
cyan!60!black,every mark/.append style={solid,fill=cyan!80!black},mark=diamond*,mark size=2.5pt\\
orange,every mark/.append style={solid,fill=orange!80!black},mark=square*,mark size=2.5pt\\
}

\pgfplotscreateplotcyclelist{unfmixed2}{%
teal,every mark/.append style={solid,fill=teal!80!black},mark=*,mark size=2.5pt\\
orange,every mark/.append style={solid,fill=orange!80!black},mark=square*,mark size=2.5pt\\
}

\pgfplotscreateplotcyclelist{unfmixed3}{%
 orange,every mark/.append style={solid,fill=orange!80!black},mark=square*,mark size=2.5pt\\
 teal,every mark/.append style={solid,fill=teal!80!black},mark=*,mark size=2.5pt\\
 cyan!60!black,every mark/.append style={solid,fill=cyan!80!black},mark=diamond*,mark size=2.5pt\\
 red!70!white,every mark/.append style={solid,fill=red!80!black},mark=pentagon*,mark size=2.5pt\\
 ForestGreen,every mark/.append style={solid,fill=ForestGreen!80!black},mark=triangle*,mark size=2.5pt\\
}

\pgfplotscreateplotcyclelist{paulcolors}{%
violet,every mark/.append style={solid,fill=violet},mark=square*,very thick,mark size=3pt\\%
teal,every mark/.append style={solid,fill=teal},mark=*,very thick,mark size=3pt\\%
orange,every mark/.append style={solid,fill=orange},mark=diamond*,very thick,mark size=3pt\\%
violet,densely dashed,every mark/.append style={solid,fill=violet},mark=square*,very thick,mark size=3pt\\%
teal,densely dashed,every mark/.append style={solid,fill=teal},mark=*,very thick,mark size=3pt\\%
orange,densely dashed,every mark/.append style={solid,fill=orange},mark=diamond*,very thick,mark size=3pt\\%
violet,dash dot,every mark/.append style={solid,fill=violet},mark=square*,very thick,mark size=3pt\\%
teal,dash dot,every mark/.append style={solid,fill=teal},mark=*,very thick,mark size=3pt\\%
orange,dash dot,every mark/.append style={solid,fill=orange},mark=diamond*,very thick,mark size=3pt\\%
}
\pgfplotscreateplotcyclelist{paulcolors2}{%
teal,every mark/.append style={solid,fill=teal},mark=*,very thick,mark size=3pt\\%
orange,every mark/.append style={solid,fill=orange},mark=diamond*,very thick,mark size=3pt\\%
teal,densely dashed,every mark/.append style={solid,fill=teal},mark=*,very thick,mark size=3pt\\%
orange,densely dashed,every mark/.append style={solid,fill=orange},mark=diamond*,very thick,mark size=3pt\\%
teal,dash dot,every mark/.append style={solid,fill=teal},mark=*,very thick,mark size=3pt\\%
orange,dash dot,every mark/.append style={solid,fill=orange},mark=diamond*,very thick,mark size=3pt\\%
}
\pgfplotscreateplotcyclelist{paulcolors1}{%
teal,every mark/.append style={solid,fill=teal},mark=*,very thick,mark size=3pt\\%
}
\pgfplotscreateplotcyclelist{paulcolorsfill}{%
teal,fill=teal,fill opacity=0.5,thick\\
orange,fill=orange,fill opacity=0.5,thick\\
violet,fill=violet,fill opacity=0.5,thick\\
}

\pgfplotsset{
    discard if not/.style 2 args={
        x filter/.append code={
            \edef\tempa{\thisrow{#1}}
            \edef\tempb{#2}
            \ifx\tempa\tempb
            \else
                
            \fi
        }
    }
}
\pgfplotsset{compat=1.16}%

\pgfplotsset{tick label style={font=\small},label style={font=\small},legend style={font=\small},}
\pgfplotsset{ width=.49\linewidth}

\usepackage{listings}

\definecolor{maroon}{cmyk}{0, 0.87, 0.68, 0.32}
\definecolor{halfgray}{gray}{0.55}
\definecolor{ipython_frame}{RGB}{207, 207, 207}
\definecolor{ipython_bg}{RGB}{247, 247, 247}
\definecolor{ipython_red}{RGB}{186, 33, 33}
\definecolor{ipython_green}{RGB}{0, 128, 0}
\definecolor{ipython_cyan}{RGB}{64, 128, 128}
\definecolor{ipython_purple}{RGB}{170, 34, 255}

\lstdefinelanguage{iPython}{
    morekeywords=[2]{abs,all,any,basestring,bin,bool,bytearray,callable,chr,classmethod,cmp,compile,complex,delattr,dict,dir,divmod,enumerate,eval,execfile,file,filter,float,format,frozenset,getattr,globals,hasattr,hash,help,hex,id,input,int,isinstance,issubclass,iter,len,list,locals,long,map,max,memoryview,min,next,object,oct,open,ord,pow,property,range,raw_input,reduce,reload,repr,reversed,round,set,setattr,slice,sorted,staticmethod,str,sum,super,tuple,type,unichr,unicode,vars,xrange,zip,apply,buffer,coerce,intern},%
    sensitive=true,%
    morecomment=[l]\#,%
    morestring=[b]',%
    morestring=[b]",%
    morestring=[s]{'''}{'''},%
    morestring=[s]{"""}{"""},%
    morestring=[s]{r'}{'},%
    morestring=[s]{r"}{"},%
    morestring=[s]{r'''}{'''},%
    morestring=[s]{r"""}{"""},%
    morestring=[s]{u'}{'},%
    morestring=[s]{u"}{"},%
    morestring=[s]{u'''}{'''},%
    morestring=[s]{u"""}{"""},%
    literate=
    {á}{{\'a}}1 {é}{{\'e}}1 {í}{{\'i}}1 {ó}{{\'o}}1 {ú}{{\'u}}1
    {Á}{{\'A}}1 {É}{{\'E}}1 {Í}{{\'I}}1 {Ó}{{\'O}}1 {Ú}{{\'U}}1
    {à}{{\`a}}1 {è}{{\`e}}1 {ì}{{\`i}}1 {ò}{{\`o}}1 {ù}{{\`u}}1
    {À}{{\`A}}1 {È}{{\'E}}1 {Ì}{{\`I}}1 {Ò}{{\`O}}1 {Ù}{{\`U}}1
    {ä}{{\"a}}1 {ë}{{\"e}}1 {ï}{{\"i}}1 {ö}{{\"o}}1 {ü}{{\"u}}1
    {Ä}{{\"A}}1 {Ë}{{\"E}}1 {Ï}{{\"I}}1 {Ö}{{\"O}}1 {Ü}{{\"U}}1
    {â}{{\^a}}1 {ê}{{\^e}}1 {î}{{\^i}}1 {ô}{{\^o}}1 {û}{{\^u}}1
    {Â}{{\^A}}1 {Ê}{{\^E}}1 {Î}{{\^I}}1 {Ô}{{\^O}}1 {Û}{{\^U}}1
    {œ}{{\oe}}1 {Œ}{{\OE}}1 {æ}{{\ae}}1 {Æ}{{\AE}}1 {ß}{{\ss}}1
    {ç}{{\c c}}1 {Ç}{{\c C}}1 {ø}{{\o}}1 {å}{{\r a}}1 {Å}{{\r A}}1
    {€}{{\EUR}}1 {£}{{\pounds}}1
    {^}{{{\color{ipython_purple}\^{}}}}1
    {=}{{{\color{ipython_purple}=}}}1
    {+}{{{\color{ipython_purple}+}}}1
    {*}{{{\color{ipython_purple}$^\ast$}}}1
    {/}{{{\color{ipython_purple}/}}}1
    {+=}{{{+=}}}1
    {-=}{{{-=}}}1
    {*=}{{{$^\ast$=}}}1
    {/=}{{{/=}}}1,
    literate=
    *{-}{{{\color{ipython_purple}-}}}1
     {?}{{{\color{ipython_purple}?}}}1,
    identifierstyle=\color{black}\ttfamily,
    commentstyle=\color{ipython_cyan}\ttfamily,
    stringstyle=\color{ipython_red}\ttfamily,
    keepspaces=true,
    showspaces=false,
    showstringspaces=false,
    rulecolor=\color{ipython_frame},
    framexleftmargin=0mm,
    numbers=left,
    numberstyle=\tiny\color{halfgray},
    numbersep=1mm,
    xleftmargin=1mm,
    basicstyle=\scriptsize,
    keywordstyle=\color{ipython_green}\ttfamily,
}

\lstdefinestyle{trefftzy}{
    language=iPython,
    emptylines=1,
    breaklines=true,
    basicstyle=\footnotesize\ttfamily\color{black},    
    moredelim=**[is][\color{teal}]{<}{>},
    moredelim=**[is][\color{purple}]{'}{'},
}

\newcommand{\nv}{\boldsymbol{\nu}}

\DeclareMathOperator{\Id}{\mathrm{id}}

\newcommand{\Th}{{\mathcal{T}_h}}

\newcommand{\Fh}{\mathcal{F}_h}

\newcommand{\jump}[1]{[\![ #1 ]\!]}

\newcommand{\hdgjump}[1]{[\![ \underline{#1} ]\!]}

\newcommand{\seqh}[1]{(#1_h)_{h \in \mathcal{H}}}

\renewcommand\Re{\operatorname{Re}}
\renewcommand\Im{\operatorname{Im}}

\newcommand{\lsp}{\Pi^{p-2}_{F}}
\newcommand{\lspPerp}{(\Id-\lsp)}
\newcommand{\hessian}{D^2}
\let \vec \mathbf

\newcommand{\stab}{\eta}

\newcommand{\sterm}{\mathcal{S}}

\newcommand{\dn}{\partial_{\bm{\nu}}}
\newcommand{\nHn}[1]{\vec{n}^\top \! (\hessian #1) \vec{n}}

\renewcommand{\ul}[1]{\underline{#1}}

\renewcommand{\u}{\ul{u}}
\newcommand{\pih}{\ul{\pi}_h}
\newcommand{\Pih}{\ul{\Pi}_h}

\newcommand{\uh}{\ul{u}_h}
\newcommand{\uvol}{u_h}
\newcommand{\ufac}{\sigma_{F}}

\newcommand{\vh}{\ul{v}_h}
\newcommand{\vvol}{v_h}
\newcommand{\vfac}{\mu_{F}}

\newcommand{\wh}{\ul{w}_h}
\newcommand{\wvol}{w_h}

\newcommand{\GammaR}{\Gamma_{\! \! R}}
\newcommand{\GammaD}{\Gamma_{\! \! D}}

\newcommand{\lift}{\mathcal{R}_h}

\newcommand{\Am}{\mathbb{A}}
\newcommand{\FOtermu}{\alpha \Delta u + \beta \nHn{u}}

\renewcommand{\FOtermu}{\mathbb{A} \! : \! \hessian u}

\newcommand{\FOtermuh}{\alpha \Delta \uvol + \beta \nHn{\uvol}}
\renewcommand{\FOtermuh}{\mathbb{A} \! : \! \hessian \uvol}

\newcommand{\FOterm}[1]{\mathbb{A} \! : \! \hessian #1}

\newcommand{\frobip}[2]{#1 \! : \! #2}

\newcommand{\CR}{C_{\! R}}

\newcommand{\CE}{C_{\! E}}

\newcommand{\dCr}{\delta_{\CR}}
\newcommand{\dCe}{\delta_{C_{\! E}}}

\newcommand{\Clift}{C_{\lift}}

\newcommand{\sconst}{C_{\text{stab}}}
\newcommand{\Idm}{\mathbb{I}}

\newcommand{\Pvol}{\Pi_h^{\text{vol}}}

\begin{document}
\title{A hybrid $C^{0}$-interior penalty method for the nematic Helmholtz--Korteweg equation}

\author[1]{\fnm{Tim} \sur{van Beeck}}\email{t.beeck@math.uni-goettingen.de}

\affil[1]{\orgdiv{Institute for Numerical and Applied Mathematics}, \orgname{ University of G\"{o}ttingen}, \orgaddress{\street{Lotzestr. 16-18}, \city{G\"{o}ttingen}, \postcode{37083}, \country{Germany}}}

\keywords{Nematoacoustics, Hybridization, $C^0$-interior penalty, Compact perturbations, Cordes condition}

\abstract{The nematic Helmholtz--Korteweg equation models the propagation of time-harmonic acoustic waves in nematic Korteweg fluids, such as nematic liquid crystals. The PDE augments the classical Helmholtz equation with two additional fourth-order terms, one of which is anisotropic in the direction of the nematic field. We refine the previous continuous analysis of Farrell et al.~(2025) by using the Cordes condition and present a $C^0$-hybrid interior penalty discretization. The proposed discretization offers greater flexibility than $C^1$-conforming methods and is well-suited for applications in three dimensions and on curved domains. We prove stability of the method for any polynomial degree greater than or equal to two, independent of the spatial dimension, provided that the anisotropy is sufficiently small. Further, we show that the sequence of discrete solutions converges to the continuous solution under minimal regularity assumptions and derive convergence rates if the continuous solution has additional regularity. Finally, we illustrate the capabilities of the method through numerical examples.}

\maketitle

\section{Introduction}
\noindent The mathematical and physical communities have studied liquid crystals intensively in the last decades \cite{S16,B17,V18,WZZ21}, not only because of the intriguing challenges they present but also due to their significant potential for technological applications. One prominent example is Liquid Crystal Displays (LCDs), whose core mechanism is based on the principle that polarized light waves can be manipulated by controlling the orientation of the liquid crystal molecules.

In this work, we consider the propagation of time-harmonic acoustic waves in \emph{nematic liquid crystals}, which are liquid crystals whose local average orientation is described by a unit vector $\bm{n} \in \mathbb{R}^d$, known as the nematic director. As in LCDs, the nematic director interacts with the propagating wave and can be controlled by an external electromagnetic field, thereby enabling control of the propagation itself. In other words, the nematic director induces \emph{anisotropy} in the attenuation and the speed of sound of acoustic waves; in particular, the wave propagates faster in the direction of the nematic director $\bm{n}$ \cite{LL70,SelingerEtAl02,SV12,V18,FZ24,FvBZ25}. We illustrate this with an example in \Cref{fig:intro:waveguide}. The study of such interactions is known as \emph{nematoacoustics} \cite[Chap.~5]{SV12} \cite{V09}.

The propagation of time-harmonic acoustic waves in nematic liquid crystals can be modeled by the \emph{nematic Helmholtz--Korteweg equation} \cite{FZ24}. Let $\Omega \subset \mathbb{R}^d$, $d = 2,3$, be a bounded Lipschitz domain that either has a smooth boundary or is a convex polygon. For a given source term $f \in L^2(\Omega)$, we consider the problem: find $u : \Omega \subset \mathbb{R}^d \to \mathbb{C}$ such that 
\begin{align}\label{eq:nematic_helmholtz_korteweg}
      \Delta (\alpha \Delta u + \beta \nHn{u}) - \Delta u - k^2 u = f \text{ in } \Omega, 
\end{align}
where the parameters $\alpha > 0$ and $\beta \in \mathbb{R}$ are constitutive parameters and $\bm{n} \in \mathbb{R}^d$ is the constant nematic director. Further, we denote by $\hessian u$ the Hessian matrix of $u$ and by $\dn u \coloneqq \nabla u \cdot \nv$ the normal derivative, where $\nv$ is the outer unit normal to $\partial \Omega$. In contrast to the Helmholtz equation, where we typically model the acoustic pressure, the solution of \eqref{eq:nematic_helmholtz_korteweg} models the \emph{condensation} of the time-harmonic acoustic wave. 

We supplement \eqref{eq:nematic_helmholtz_korteweg} with mixed \emph{sound-soft} and \emph{impedance} boundary conditions. Let $\GammaD$ and $\GammaR$ be a non-intersecting partition of the boundary $\partial \Omega$ such that $\GammaD \cup \GammaR = \partial \Omega$, where we allow the cases where $\GammaR = \emptyset$ or $\GammaD = \emptyset$. For a specified impedance parameter $\theta  > 0$, we consider
\begin{subequations}\label{eq:BNDCondsNew}
   \begin{align}
      \begin{split}
      u &= 0 \quad \text{on } \Gamma_D, \\
      \dn \bigl(u - (\alpha \Delta u + \beta \nHn{u})\bigr) - i \theta u &= 0 \quad \text{on } \Gamma_R,
      \end{split}
      \label{eq:BNDCondsNew:a} \\
      \alpha \Delta u + \beta \nHn{u} &= 0 \quad \text{on } \partial \Omega.
      \label{eq:BNDCondsNew:b}
   \end{align}
\end{subequations}

The nematic Helmholtz--Korteweg equation \eqref{eq:nematic_helmholtz_korteweg} has first been derived in \cite{FZ24} by using the fact that nematic liquid crystals can be considered as nematic Korteweg fluids \cite{V09}. The impedance boundary conditions here are different from the original impedance boundary conditions $\dn (\alpha \Delta u + \beta \nHn{u}) = i \theta (\alpha \Delta u + \beta \nHn{u})$ considered in the derivation of the equation in \cite{FZ24}. Mathematically, this modification is motivated by the fact that the original boundary conditions are difficult to treat and require higher regularity assumptions, cf.~\cite[Rem.~2.2]{FvBZ25}. Instead, we impose \eqref{eq:BNDCondsNew:b}, which physically models the situation where the nematic liquid crystal is surrounded by an ideal gas. In this setting, we require \eqref{eq:BNDCondsNew:b} to ensure that the pressure and the density of the fluid match across the boundary $\partial \Omega$.

\begin{figure}[!htbp]
   \centering
   \includegraphics[width=0.98\textwidth]{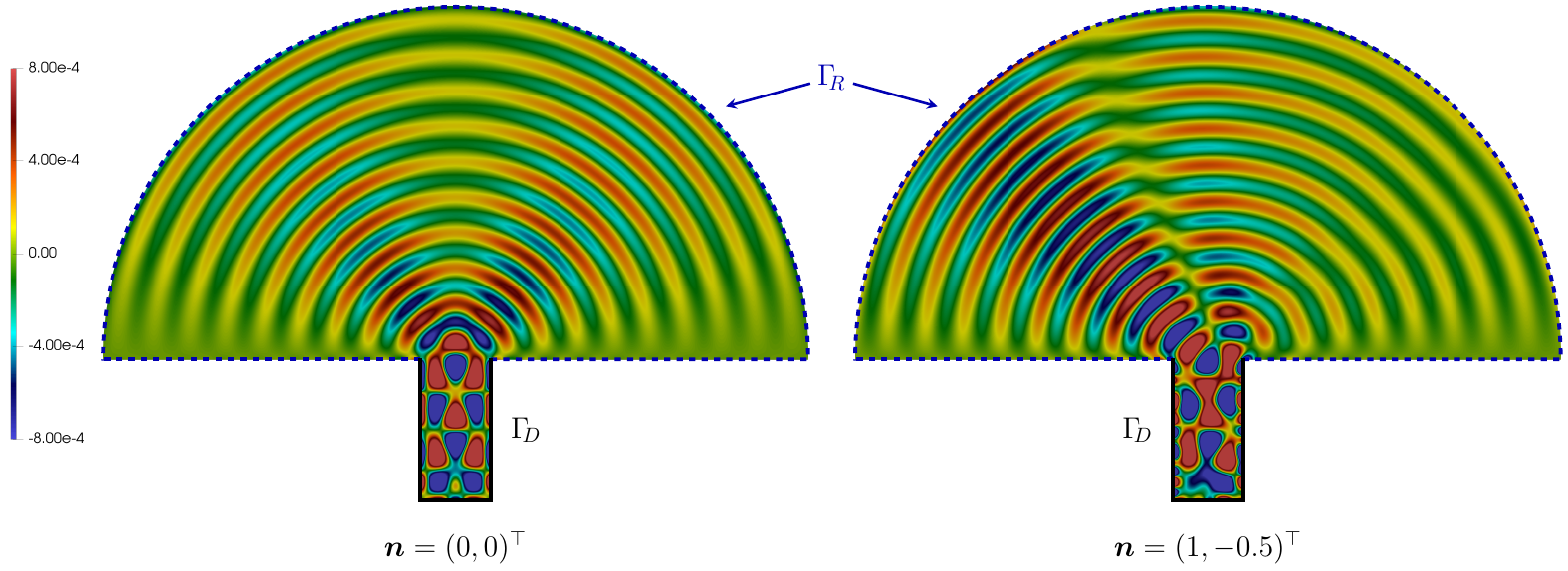}
   \caption{We compare the propagation of a Gaussian pulse into the depicted domain without (left) and with a nematic field (right). The solutions are computed with the method introduced in \Cref{subsec:DWF} for the parameters $\alpha = 5\cdot 10^{-3}$, $\beta = 10^{-3}$, $k = \theta = 75$ with polynomial degree $p = 7$. The anisotropy caused by the presence of the nematic field $\bm{n}$ is clearly visible in the right figure.}
   \label{fig:intro:waveguide}
\end{figure}

Beyond the difficulties inherent to the classical Helmholtz and biharmonic problems, the analysis of \eqref{eq:nematic_helmholtz_korteweg} is further complicated by the anisotropic fourth-order term. A first rigorous analysis of \eqref{eq:nematic_helmholtz_korteweg}, together with an $H^2$-conforming finite element discretization, was carried out in \cite{FvBZ25} for sound-soft and sound-hard (i.e., impedance boundary conditions with $\theta = 0$) boundary conditions. There, well-posedness is established by considering the problem as a perturbation of the isotropic case $\beta = 0$ for $\vert \beta \vert < C_{\beta = 0}$, where the threshold constant $C_{\beta = 0}$ depends on the stability constant of the isotropic problem. While a smallness condition on $\beta$ relative to $\alpha$ is physically reasonable \cite{V09}, the dependence on the isotropic stability constant is more delicate. This constant degenerates as $k$ approaches a resonant frequency of the isotropic problem, thus forcing $\beta \to 0$. This degeneracy is an artifact of the perturbation argument rather than a feature of the equation itself: the anisotropic problem possesses its own, distinct resonances (cf.~\Cref{rem:Eigenvalues}), and may remain well-posed at the isotropic resonant frequencies, exactly where the perturbation bound forces $\beta \to 0$.

The present work encompasses two major contributions. First, we prove that the problem \eqref{eq:nematic_helmholtz_korteweg} with boundary conditions \eqref{eq:BNDCondsNew} is Fredholm with index zero under an improved smallness condition on $\beta$, by exploiting the Cordes condition \cite{C56,SmearsSueli13,NSZ17Acta} to quantify the anisotropic effect of the nematic term.
Crucially, the resulting smallness condition on $\beta$ is independent of the stability constant of the isotropic case $\beta = 0$, and for example, is satisfied for all $\beta > - \alpha$ in the case of sound-soft boundary conditions on convex domains $\Omega \subset \mathbb{R}^2$. Assuming injectivity, the proof of which we leave for future work but justify numerically, this implies the well-posedness of \eqref{eq:nematic_helmholtz_korteweg} for a significantly broader range of $\beta \in \mathbb{R}$, independent of the stability constant of the isotropic case.

Second, we introduce a $C^0$-hybrid interior penalty ($C^0$-HIP) discretization of \eqref{eq:nematic_helmholtz_korteweg} that is considerably more flexible than comparable $C^1$-conforming discretizations. 
The implementation of $C^1$-conforming finite elements, such as the Argyris element \cite{Argyris} or the Hsieh--Clough--Tocher (HCT) macroelement \cite{HCT}, in curved or three-dimensional domains is notoriously difficult. Moreover, these elements require the polynomial degree to be sufficiently large; for example, the Argyris element requires polynomial degree $p \ge 5$ in two dimensions and $p \ge 9$ in three dimensions \cite{Z73}. To overcome these limitations, fourth-order problems are often discretized with non-conforming methods. For the biharmonic problem, fully discontinuous Galerkin methods have been considered in \cite{SM07,MSB07,MS03,D19} and hybrid high-order (HHO) methods in \cite{DE22,LTT25}. 

An alternative approach is the $C^{0}$-interior penalty ($C^0$-IP) method \cite{EGHLM02,BS05,Brenner2012,BN11}, which uses an $H^1$-conforming finite element space and discontinuous Galerkin techniques to treat the non-conformity with respect to $H^2$. We adopt this approach to discretize \eqref{eq:nematic_helmholtz_korteweg} because $C^0$-IP methods require fewer degrees of freedom than fully discontinuous Galerkin methods but still avoid the challenges of $C^1$-conforming methods. 

Furthermore, we use \emph{hybridization} \cite{CGL09,L10,DPE15,LS16} to improve the computational efficiency. In general, the idea of hybridization is to introduce additional facet unknowns that decouple the volume unknowns. The latter then can be eliminated through static condensation, greatly reducing the computational costs. While hybridization can also be applied to $C^0$-IP methods, the reduction is slightly less pronounced because static condensation only eliminates higher-order interior bubbles \cite[Rem.~3.3]{DE24C0HHO}. To the best of the author's knowledge, this has so far only been studied in \cite{DE24C0HHO} for the biharmonic problem and briefly discussed in \cite{F21C0HDG}. As such, the present work offers further insight into the hybridization of $C^0$-IP methods, with the problem \eqref{eq:nematic_helmholtz_korteweg} serving as a concrete example.

We prove that the proposed $C^0$-HIP discretization is stable for polynomial degree $p \ge 2$, whenever the continuous problem is well-posed. The key is a discrete G\r{a}rding inequality that reproduces the coercivity-up-to-compactness structure of the continuous problem at the discrete level. Casting the discretization into the framework of \cite{HLS22H1,Thesis_vB23,HLvB25}, this yields convergence of the approximations to the continuous solution under minimal regularity, that is, $u \in H^2(\Omega)$. We then derive optimal-order convergence rates under additional regularity assumptions on the continuous solution.

Finally, we provide numerical examples illustrating the capabilities of the proposed method. In particular, we present the first three-dimensional numerical simulations with \eqref{eq:nematic_helmholtz_korteweg} of the anisotropic effect of the nematic field.

\medskip
 
\textbf{Organization of the article.} In \Cref{sec:prelim}, we introduce the notation and the abstract framework that we use to analyze \eqref{eq:nematic_helmholtz_korteweg} and the proposed $C^{0}$-HIP discretization. Afterwards, we introduce and analyze the continuous weak formulation in \Cref{sec:weakForms}. The discretization is introduced in \Cref{subsec:DWF} and analyzed in \Cref{sec:NumAnalysis}. Finally, we present numerical examples in \Cref{sec:numex}.

\section{Notations and abstract framework}\label{sec:prelim}
This section collects the notation and the abstract theory used in the forthcoming sections.  Both our continuous and discrete analyses rest on a single mechanism: coercivity up to a compact perturbation, in the form of a G\r{a}rding inequality. At the continuous level this makes the operator Fredholm with index zero, see \Cref{lem:GardingsInequality}, so that well-posedness reduces to injectivity through the Fredholm alternative \cite[Thm.~8.2]{SBH19}. At the discrete level, reproducing the same structure uniformly yields asymptotic stability of the non-conforming discretization, see \Cref{thm:discreteGarding}.

\subsection{Notations} \label{sec:notation}
Let $\{ \Th \}_{h \in \mathcal{H}}$ be a family of shape-regular, simplicial triangulations of the bounded domain $\Omega \subset \mathbb{R}^d$, $d = 2,3$, indexed by the mesh size $h \coloneqq \max_{T \in \Th} h_T$, $h_T \coloneqq \operatorname{text}{diam}(T)$. We collect the mesh sizes in an index set $\mathcal{H} \subset \mathbb{R}_{> 0}$ and typically consider the case $h \to 0$. For $h \in \mathcal{H}$, we denote by $\Fh$ the collection of all facets of $\Th$, and by $\partial \Th$ the collection of all element boundaries. We assume that the triangulation respects the partition of the boundary $\partial \Omega = \GammaD \cup \GammaR$, that is, that each boundary facet $F \in \Fh \cap \partial \Omega$ can be uniquely associated with one of the sets $\GammaD$ or $\GammaR$. Then, we denote by 
\begin{align*}
   \Fh^{\mathcal{B}} \coloneqq \{ F \in \Fh : F \subset \Gamma_{\! \mathcal{B}} \},\ \mathcal{B} \in \{R,D\}, \text{ and } \Fh^{i} \coloneqq \{ F \in \Fh : F \cap \partial \Omega = \emptyset \},
\end{align*}
the collection of all facets associated with the respective parts of the boundary and the collection of all interior facets. The setup is visualized in \Cref{fig:notationAndSpace}. 

With abuse of notation, we denote by $\nv$ the unit outer normal to $\partial \Omega$ and the unit outer normal for each element $T \in \Th$. In the latter case, we assume the orientation of the normal vectors to be arbitrary but fixed in one direction and the mesh facets $F \in \Fh$ to be oriented accordingly. Across mesh interfaces $F \in \Fh$, we define the DG-jump of a scalar function $u$ as $\jump{u} \coloneqq u \vert_{T_1} - u \vert_{T_2}$ where $T_1, T_2 \in \Th$ are such that $T_1 \cap T_2 = F$.

We use standard notation for Sobolev spaces $H^s(\Omega)$ on the domain $\Omega$.
For any of the above defined collections $\Sigma_h \in \{ \Th, \Fh, \partial \Th \}$, we denote by $\mathbb{P}^p(\Sigma_h)$ the space of piecewise (discontinuous) polynomials of order up to $p$ on the elements of $\Sigma_h$, and for any broken Hilbert space $\mathbb{S}(\Sigma_h)$ defined piecewise on the collection $\Sigma_h$ (for example $L^2(\Th)$ or $H^s(\Th)$), we use the abbreviation 
\begin{align*}
   (\cdot,\cdot)_{\mathbb{S}(\Sigma_h)} \coloneqq \sum_{\sigma \in \Sigma_h} (\cdot,\cdot)_{\mathbb{S}(\sigma)}, \qquad \Vert \cdot \Vert_{\mathbb{S}(\Sigma_h)}^2 \coloneqq \sum_{\sigma \in \Sigma_h} \Vert \cdot \Vert_{\mathbb{S}(\sigma)}^2.
\end{align*}
Furthermore, we abbreviate $(\cdot,\cdot)_{\Omega} = (\cdot,\cdot)_{L^2(\Omega)}$ and $\Vert \cdot \Vert_{\Omega} = \Vert \cdot \Vert_{L^2(\Omega)}$ and similarly $(\cdot,\cdot)_{\Sigma_h} = (\cdot,\cdot)_{L^2(\Sigma_h)}$ for any collection $\Sigma_h \in \{ \Th, \Fh, \partial \Th \}$. 

The differential operators $\nabla u$, $\Delta u$, and $\hessian u$ are interpreted in a weak sense and for functions defined piecewise on the triangulation $\Th$, for example, elements of $\mathbb{P}^p(\Th)$, we consider these operators to be defined in a broken sense. We denote by $\dn u \coloneqq \nabla u \cdot \nv$ the normal derivative defined with respect to the normal vector $\nv$. 

Throughout the article, we denote by $C > 0$ a generic constant, which may change at each occurrence but is independent of the mesh size $h$ and the coefficients $\alpha$ and $\beta$.

\subsection{Abstract framework}
Let $V$ be a Hilbert space equipped with the inner product $(\cdot,\cdot)_{V}$ and let $\mathcal{L}(V)$ be the space of bounded linear operators acting on $V$. For a given bounded sesquilinear form $a(\cdot,\cdot) : V \times V \to \mathbb{C}$, we denote by $A \in \mathcal{L}(V)$ the associated bounded linear operator defined via $(Au,v)_{V} \coloneqq a(u,v)$, $u,v \in V$. Given $f \in L^2(\Omega)$, we study the well-posedness of the abstract problem
\begin{align}\label{eq:Abstract:CP}
   \text{Find } u \in V \text{ such that } a(u,v) = (f,v)_{\Omega} \text{ for all } v \in V. 
\end{align}
We employ the following G\r{a}rding inequality to show that the operator $A \in \mathcal{L}(V)$ is Fredholm with index zero. 

\begin{lemma}\label[lemma]{lem:GardingsInequality}
   Let $V, W$ be two Hilbert spaces such that $V \subset W$ with compact embedding $\iota : V \to W$. If there exist constants $C_V, C_W > 0$ such that 
   \begin{align}\label{eq:Abstract:Garding}
      \Re a(u,u) \ge C_V \Vert u \Vert^2_{V} - C_W \Vert u \Vert^2_{W} \qquad \forall u \in V, \tag{G}
   \end{align}
   then the associated operator $A \in \mathcal{L}(V)$ is a compact perturbation of a coercive operator and thus Fredholm with index zero.
\end{lemma}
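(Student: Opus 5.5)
The plan is to split $A$ into a coercive part and a compact part, using only the G\r{a}rding inequality \eqref{eq:Abstract:Garding} and the compactness of $\iota$. First I would build the compact part. The sesquilinear form $(u,v) \mapsto (\iota u, \iota v)_W$ is bounded on $V \times V$, since $\Vert \iota u\Vert_W \le \Vert \iota \Vert \Vert u \Vert_V$. By the Riesz representation theorem it defines an operator $K \in \mathcal{L}(V)$ with $(Ku,v)_V = (\iota u, \iota v)_W$ for all $u,v \in V$. Equivalently, $K = \iota^* \iota$, where $\iota^* : W \to V$ is the Hilbert adjoint. Since $\iota$ is compact and $\iota^*$ is bounded, $K$ is compact.

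Next I would set $B \coloneqq A + C_W K$. For every $u \in V$,
\begin{align*}
 \Re (Bu,u)_V = \Re a(u,u) + C_W \Vert u \Vert_W^2 \ge C_V \Vert u \Vert_V^2 ,
\end{align*}
so $B$ is coercive. By the Lax--Milgram lemma, $B$ is then an isomorphism on $V$: it is injective with closed range by the coercivity bound, and its range is dense because the same bound applies to $B^*$. Thus $A = B - C_W K$ is a compact perturbation of a coercive operator.

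Finally, for the Fredholm property I would write $A = B(\Id - C_W B^{-1}K)$. Here $B^{-1}K$ is compact, so $\Id - C_W B^{-1}K$ is Fredholm of index zero by the Riesz--Schauder theory. Composing with the isomorphism $B$ leaves kernel dimension and cokernel dimension unchanged, so $A$ is Fredholm with index zero; the Fredholm alternative \cite[Thm.~8.2]{SBH19} can also be cited for this step.

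No step is a real obstacle. The only points needing care are that $K$ is compact as an operator on $V$, which is why I factor it as $\iota^*\iota$ rather than arguing directly on $W$, and that Lax--Milgram applies to complex sesquilinear forms when only the real part is bounded below.
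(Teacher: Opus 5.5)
Your proposal is correct and follows essentially the paper's route: split $A$ into the coercive operator $B$ and the compact operator induced by $-C_W(\iota u,\iota v)_W$, apply Lax--Milgram to $B$, and conclude that $A$ is Fredholm of index zero. The only difference is that you prove the steps the paper delegates to citations, namely compactness of $K$ via the factorization $K=\iota^*\iota$ and the index-zero property via $A = B(\mathrm{id}_V - C_W B^{-1}K)$ and Riesz--Schauder theory.
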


\begin{proof}
   The G\r{a}rding inequality \eqref{eq:Abstract:Garding} implies that $A = B + K$ where $B \in \mathcal{L}(V)$ is coercive and $K \in \mathcal{L}(V)$ defined by $(Ku,v)_{V} \coloneqq - C_{W} (\iota u, \iota v)_{W}$, $u,v \in V$, is compact \cite[Thm.~5.20]{Spence14}. By the Lax--Milgram theorem, $B$ is a bounded isomorphism and thus $A$ is Fredholm with index zero.
\end{proof}

Consequently, the well-posedness of \eqref{eq:Abstract:CP} is equivalent to injectivity. For our intended application to \eqref{eq:nematic_helmholtz_korteweg}, we will apply \Cref{lem:GardingsInequality} with $V = H^2(\Omega) \hookrightarrow L^2(\Omega) = W$, leveraging the compactness of the Sobolev embedding $\iota : H^2(\Omega) \hookrightarrow L^2(\Omega)$.

We now consider approximations of the abstract problem \eqref{eq:Abstract:CP}, with the aim of transferring the structure induced by \eqref{eq:Abstract:Garding} to the discrete setting to obtain stability. Let $\seqh{V}$ be a sequence of finite-dimensional Hilbert spaces and $a_h(\cdot,\cdot) : V_h \times V_h \to \mathbb{C}$ be a bounded sesquilinear form. We denote by $\seqh{A}$, $A_h \in \mathcal{L}(V_h)$ the associated sequence of linear operators. We consider the discrete problem:
\begin{align}\label{eq:Abstract:DP}
   \text{Find } u_h \in V_h \text{ such that } a_h(u_h,v_h) = (f,v_h)_{\Omega} \text{ for all } v_h \in V_h.
\end{align}
The well-posedness of \eqref{eq:Abstract:DP} is equivalent to the well-known \emph{inf-sup condition}:
\begin{align}\label{eq:Abstract:inf-sup}
   \inf_{v_h \in V_h} \sup_{w_h \in V_h} \frac{\vert a_h(v_h,w_h) \vert}{\Vert v_h \Vert_{V_h} \Vert w_h \Vert_{V_h}} \ge \sconst > 0.
\end{align}
In the following, we call $a_h(\cdot,\cdot)$ or the sequence $\seqh{A}$ \emph{stable} if \eqref{eq:Abstract:inf-sup} is satisfied with constant $\sconst > 0$ independent of the mesh size $h$ and \emph{asymptotically stable} if there exists an $h_0 > 0$ such that stability holds for all $h \le h_0$.
The conforming\footnotemark \ Galerkin approximation of problems that are compact perturbations of coercive operators is well studied; see, for example, \cite{Dem94}, \cite[Thm.~4.2.9]{SS11}, \cite[Thm.~5.18]{Spence14}, \cite{BHP17}, or \cite[Sec.~8.9]{SBH19}. In particular, conforming Galerkin approximations of such problems are asymptotically stable if the continuous problem is well-posed.

\footnotetext{To be precise, we are considering a conforming Galerkin approximation, where the spaces $V_h$ are nested subspaces of $V$ such that $\bigcup_{h \in \mathcal{H}} V_h$ is dense in $V$. The discrete sesquilinear form is the restriction of $a(\cdot,\cdot)$ to the discrete subspace, i.e., $a_h \coloneqq a \vert_{V_h \times V_h}$.}

An abstract counterpart of this statement for non-conforming discretizations follows from the more general result in \cite[Thm.~3]{HLS22H1}. Therein, compact perturbations of T-coercive operators are considered (T-coercivity is equivalent to the inf-sup condition; see \Cref{appendix:Tcomp} for details). Intuitively, the statement is that we obtain asymptotic stability if we can transfer the structure to the discrete level consistently, that is, if we are uniformly (T-)coercive up to a compact sequence of perturbations.

The result has been applied in \cite{H23Hdiv,Thesis_vB23,HLvB25} to (hybrid) discontinuous Galerkin discretizations, and is formalized within the framework of \emph{discrete approximation schemes (DAS)} \cite{Vai81}, which allows for a non-conforming analysis without additional regularity assumptions on the continuous solution. To avoid unnecessary technicalities, we only introduce the parts of the framework that are directly relevant to our analysis and give additional details in \Cref{appendix:Tcomp}.
Since $V_h \not \subset V$, we cannot directly compare continuous and discrete elements. To circumvent this, we assume that there is a sequence of projection operators $\pi_h \in \mathcal{L}(V,V_h)$ such that 
\begin{align}\label{eq:pihCondition}
   \lim_{h \to 0} \Vert \pi_h u \Vert_{V_h} = \Vert u \Vert_{V} \text{ for all } u \in V. 
\end{align}
Then, we say that a sequence $\seqh{u}$, $u_h \in V_h$ \emph{converges} to an element $u \in V$ and write $u_h \overset{\pi_h}{\to} u$ if 
\begin{align}\label{eq:abstr:convergence}
   \lim_{h \to 0} \Vert u_h - \pi_h u \Vert_{V_h} = 0.
\end{align}
Similarly, we say that $\seqh{u}$, $u_h \in V_h$, \emph{converges weakly} to $u \in  V$ and write $u_h \overset{\pi_h}{\rightharpoonup} u$ if $\lim_{h \to 0} (u_h,\pi_h v)_{V_h} = (u,v)_{V}$ for all $v \in V$. As usual, any uniformly bounded sequence $\seqh{u}$, $u_h \in V_h$, has a weakly convergent subsequence \cite{Vai81}, and the strong and weak limits are unique.
Further, we require an alternative notion of consistency that holds \emph{asymptotically}, which is similar to the notion of consistency from \cite[Def.~5.9]{DPE12}.

\begin{definition}[Asymptotic consistency]\label[definition]{def:asympConsistency}
   We call a sesquilinear form $a_h (\cdot,\cdot) : V_h \times V_h \to \mathbb{C}$ \emph{asymptotically consistent} with $a(\cdot,\cdot) : V \times V \to \mathbb{C}$ if for all $u \in V$ and all uniformly bounded sequences $\seqh{v}$, $v_h \in V_h$, with  $v_h \overset{\pi_h}{\rightharpoonup} v$, $v \in V$, it holds that
   \begin{align}
      \lim_{h \to 0} a_h(\pi_h u,v_h) = a(u,v).
   \end{align}
\end{definition}

If a suitable projection $\pi_h$ exists and $a_h(\cdot,\cdot)$ is asymptotically consistent with $a(\cdot,\cdot)$, we say that the discretization $(a_h,\pi_h,V_h)$ constitutes a DAS of $(a,V)$. Then, we say that the sequence of sesquilinear forms $a_h(\cdot,\cdot)$ satisfies a discrete G\r{a}rding inequality if there exist constants $\widetilde{C}_{V}, \widetilde{C}_{W} > 0$ independent of $h$ such that 
\begin{align}\label{eq:Abstract:DiscrGarding}
   \Re a_h(u_h,u_h) \ge \widetilde{C}_{V} \Vert u_h \Vert^2_{V_h} - \widetilde{C}_{W} \Vert \iota_h u_h\Vert^2_{W}, \qquad \forall u_h \in V_h, \tag{G$_h$}
\end{align}
where $\iota_h : V_h \to W$ is a suitable operator. To show that \eqref{eq:Abstract:DiscrGarding} implies asymptotic stability by applying \cite[Thm.~3]{HLS22H1}, we need to verify that the perturbations in \eqref{eq:Abstract:DiscrGarding} form a compact sequence in the sense of DAS, cf.~\Cref{def:SeqCompactness}. This is guaranteed by the following assumption on $\iota_h$ that mirrors the compactness of the embedding $\iota: V \hookrightarrow W$.

\begin{assumption}\label[assumption]{ass:InclusionConsistentlyCompact}
   There exist a uniformly bounded sequence of mappings $\iota_h : V_h \to W$ and an operator $J \in \mathcal{L}(V,W)$ such that for all uniformly bounded sequences $\seqh{v}$ with $v_h \overset{\pi_h}{\rightharpoonup} v$, $v\in V$, we have that $\iota_h v_h \to Jv$ strongly in $W$. 
\end{assumption}

\Cref{ass:InclusionConsistentlyCompact} does \emph{not} require that $V_h \subset W$, which is important for our application in \Cref{sec:NumAnalysis} because the hybrid ansatz space introduced in \Cref{subsec:HIP} consists of tuples of volume and facet unknowns. Thus, it has a different structure from $W$ and is formally not a subspace. Finally, the following result shows that we obtain asymptotic stability, provided that we have a DAS and can mirror the structure of '$\text{\eqref{eq:Abstract:Garding}}+\text{compactness}$' suitably on the discrete level.

\begin{theorem}\label{thm:discreteGarding}
   Suppose that $a(\cdot,\cdot)$ admits the structure from \Cref{lem:GardingsInequality} and that the problem \eqref{eq:Abstract:CP} is well-posed. Let $(a_h,\pi_h,V_h)$ be a DAS of $(a,V)$ and $V_h$ satisfy \Cref{ass:InclusionConsistentlyCompact}. If $a_h(\cdot,\cdot)$ satisfies \eqref{eq:Abstract:DiscrGarding},
   then $a_h(\cdot,\cdot)$ is asymptotically stable on $V_h \times V_h$.
\end{theorem}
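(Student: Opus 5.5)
We argue by contradiction, using the standard compactness argument for non-conforming approximations of Fredholm operators. (Equivalently, one can invoke \cite[Thm.~3]{HLS22H1} once the discrete G\r{a}rding inequality is rewritten as uniform coercivity plus a compact perturbation; the direct argument below makes that translation explicit.)

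\textbf{Step 1: a violating sequence.} Suppose asymptotic stability fails. Then there are a sequence $h \to 0$ and elements $u_h \in V_h$ with $\Vert u_h \Vert_{V_h} = 1$ such that
\[
\sup_{w_h \in V_h} \frac{\vert a_h(u_h,w_h)\vert}{\Vert w_h\Vert_{V_h}} \to 0 .
\]
By the weak compactness of bounded sequences in the DAS sense \cite{Vai81}, we may pass to a subsequence and assume $u_h \overset{\pi_h}{\rightharpoonup} u$ for some $u \in V$.

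\textbf{Step 2: the weak limit satisfies the homogeneous problem.} Fix $v \in V$. Since $\Vert \pi_h v\Vert_{V_h} \to \Vert v\Vert_V$, the sequence $\pi_h v$ is bounded, so $a_h(u_h,\pi_h v) \to 0$.

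To identify this limit we need consistency with the arguments swapped. Asymptotic consistency, \Cref{def:asympConsistency}, puts the projection in the first slot. We therefore apply it to the adjoint forms $a_h^*(v,w) \coloneqq \overline{a_h(w,v)}$. The hypotheses of \Cref{def:asympConsistency} are symmetric in this respect, since the weak convergence $u_h \overset{\pi_h}{\rightharpoonup} u$ is all that is required. I would either assume, or verify from the definition of a DAS in \cite{Vai81,HLS22H1}, that consistency holds for the adjoint pair $(a_h^*,a^*)$ as well. This gives
\[
\overline{a(v,u)} = \lim_{h \to 0} a_h^*(\pi_h v,u_h) = \lim_{h \to 0} \overline{a_h(u_h,\pi_h v)} = 0,
\]
that is, $a(u,v) = 0$ for all $v \in V$. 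Well-posedness of \eqref{eq:Abstract:CP} then forces $u = 0$.

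This is the step I expect to be the main obstacle. The consistency must be used in the transposed direction, and I would check carefully that the DAS notion used in the paper (made precise in \Cref{appendix:Tcomp}) covers it.

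\textbf{Step 3: contradiction via \eqref{eq:Abstract:DiscrGarding}.} By \Cref{ass:InclusionConsistentlyCompact}, $\iota_h u_h \to Ju = 0$ strongly in $W$. Testing with $w_h = u_h$ gives
\[
\widetilde{C}_V = \widetilde{C}_V \Vert u_h\Vert_{V_h}^2 \le \Re a_h(u_h,u_h) + \widetilde{C}_W \Vert \iota_h u_h\Vert_W^2 .
\]
Here $\Re a_h(u_h,u_h) \le \sup_{w_h} \vert a_h(u_h,w_h)\vert/\Vert w_h\Vert_{V_h} \to 0$, and the second term tends to zero. Hence $\widetilde{C}_V \le 0$, a contradiction.

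\textbf{Conclusion.} Consequently there exist $h_0 > 0$ and $\sconst > 0$ such that \eqref{eq:Abstract:inf-sup} holds for all $h \le h_0$. Since $V_h$ is finite-dimensional, the inf-sup condition in this direction also yields surjectivity, so this is exactly asymptotic stability.
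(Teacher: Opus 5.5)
Your Step 2 has a genuine gap, and it is the one you flagged. \Cref{def:asympConsistency} is \emph{not} symmetric. It gives $a_h(\pi_h u, v_h) \to a(u,v)$ with the projected element in the first slot and the weakly convergent sequence in the second. In the operator form of \Cref{appendix:Tcomp}, this is $A_h\pi_h u - \pi_h A u \to 0$, a statement about $A_h$, not $A_h^*$. What you use is $a_h(u_h,\pi_h v)\to a(u,v)$ for $u_h \overset{\pi_h}{\rightharpoonup} u$, and this is not implied by the DAS definition.

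A counterexample: take $V=\ell^2$, $V_n=\operatorname{span}\{e_1,\dots,e_n\}$ with orthogonal projections $\pi_n$, $a(x,y)=(x,y)$ and $a_n(x,y)=(x,y)+(x,e_n)(e_1,y)$. These forms are uniformly bounded, uniformly coercive for $n\ge 2$ (since $\vert x_n x_1\vert\le\tfrac12\Vert x\Vert^2$), and asymptotically consistent in the sense of \Cref{def:asympConsistency}. Yet $e_n \overset{\pi_n}{\rightharpoonup} 0$ while $a_n(e_n,\pi_n e_1)=1\not\to a(0,e_1)=0$. So as written you have not shown $u=0$, and Step 3 has nothing to contradict.

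The simplest repair keeps your structure but runs the contradiction on the adjoint. Since $\dim V_h<\infty$, the left-hand side of \eqref{eq:Abstract:inf-sup} equals $\inf_{w_h}\sup_{v_h}\vert a_h(v_h,w_h)\vert/(\Vert v_h\Vert_{V_h}\Vert w_h\Vert_{V_h})$; both are the smallest singular value of $A_h$. So choose $\Vert u_h\Vert_{V_h}=1$ with $\sup_{w_h}\vert a_h(w_h,u_h)\vert/\Vert w_h\Vert_{V_h}\to 0$ and $u_h\overset{\pi_h}{\rightharpoonup}u$. Now \Cref{def:asympConsistency} applies verbatim: $a_h(\pi_h v,u_h)\to a(v,u)$, while the left side tends to $0$ because $\Vert\pi_h v\Vert_{V_h}$ is bounded. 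Hence $a(v,u)=0$ for all $v\in V$, and testing with $v=A^{-1}u$ gives $u=0$. Your Step 3 then goes through unchanged, since $\vert a_h(u_h,u_h)\vert$ is bounded by the same supremum.

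Alternatively, keep the primal sequence but first upgrade to strong convergence. Apply \eqref{eq:Abstract:DiscrGarding} to $e_h\coloneqq u_h-\pi_h u\overset{\pi_h}{\rightharpoonup}0$:
\begin{itemize}
\item $a_h(u_h,e_h)\to 0$ by the violating property;
\item $a_h(\pi_h u,e_h)\to a(u,0)=0$ by forward consistency;
\item $\iota_h e_h\to 0$ by \Cref{ass:InclusionConsistentlyCompact}.
\end{itemize}
Hence $\Vert e_h\Vert_{V_h}\to0$ and $\Vert u\Vert_V=1$. Then $a(u,v)=\lim a_h(\pi_h u,\pi_h v)=\lim a_h(u_h,\pi_h v)=0$, using uniform boundedness of $a_h$, which contradicts injectivity. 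This variant is essentially the regular-convergence mechanism behind \cite[Thm.~3]{HLS22H1}. The paper invokes that theorem after splitting $A_h=B_h+K_h$ into a uniformly coercive and a compact part. Either repaired version gives a self-contained alternative to that citation.
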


\begin{proof}
   We provide a proof in \Cref{appendix:Tcomp}. \Cref{ass:InclusionConsistentlyCompact} implies that the sequence of perturbations in \eqref{eq:Abstract:DiscrGarding} is compact in a suitable sense and then the claim follows from \cite[Thm.~3]{HLS22H1} and \cite[Lem.~1]{HLS22H1}.
\end{proof}

\begin{remark}[Asymptotic threshold]
   For conforming finite element discretizations, the G\r{a}rding inequality is classically used in combination with the Schatz argument \cite{S74} to obtain an explicit characterization of the asymptotic threshold $h_0$ in terms of the \emph{adjoint approximation property}. This argument does \emph{not} immediately carry over to our setting, because the discretization is non-conforming and our notion of consistency itself is asymptotic. Thus, the threshold $h_0$ in \Cref{thm:discreteGarding} is not explicit.
\end{remark}

\section{The continuous problem}\label{sec:weakForms}
In \Cref{subsec:WF}, we introduce the weak formulation of \eqref{eq:nematic_helmholtz_korteweg}. In \Cref{subsec:WP}, we show that the associated operator is Fredholm with index zero provided that $\beta$ is sufficiently small, and we discuss the injectivity of the problem. 

\subsection{Weak formulation}\label{subsec:WF}
In preparation for the forthcoming analysis, we rewrite the highest-order term in \eqref{eq:nematic_helmholtz_korteweg} as
\begin{align}\label{eq:highestOrderTerm}
   \Delta (\alpha \Delta u + \beta \nHn{u})  = \Delta(\FOtermu), \qquad  \mathbb{A} \coloneqq \alpha \Idm + \beta \bm{n} \otimes \bm{n} \in \mathbb{R}^{d \times d},
\end{align}
where $\frobip{\mathbb{A}}{\mathbb{B}} = \operatorname{tr}(\mathbb{A}^\top \mathbb{B})$, $\Am, \mathbb{B} \in \mathbb{R}^{d \times d}$, denotes the Frobenius inner product. Throughout, we set $\gamma \coloneqq \operatorname{tr} \Am / \vert \Am \vert^2_{F} > 0$, where $\vert \cdot \vert_{F}$ is the Frobenius norm; the role of this scaling factor becomes apparent in \Cref{subsec:WP}. Then, we define the Hilbert space and inner product
\begin{align}\label{eq:def:V}
   V \coloneqq \{ v \in H^2(\Omega) : v = 0 \text{ on } \GammaD \}, \qquad 
   (u,v)_V \coloneqq \gamma^{-1} (\Delta u,\Delta v)_{\Omega} + (u,v)_{H^1(\Omega)},
\end{align}
with induced norm $\Vert u \Vert_V^2 \coloneqq (u,u)_V$. To derive the weak formulation of \eqref{eq:nematic_helmholtz_korteweg}, we multiply by a test function $v \in V$, integrate by parts, and use the boundary conditions \eqref{eq:BNDCondsNew}, which yields
\begin{align}\label{eq:PartialIntegration}
   (\Delta(\FOtermu),v)_{\Omega} - (\Delta u, v)_{\Omega} = (\FOtermu,\Delta v)_{\Omega} + (\nabla u, \nabla v)_{\Omega} - i \theta (u,v)_{\GammaR}.
\end{align}
Accordingly, we define the sesquilinear form $a(\cdot,\cdot) : V \times V \to \mathbb{C}$ as 
\begin{align}\label{eq:cont:a}
   a(u,v) &\coloneqq (\FOtermu,\Delta v)_{\Omega} + (\nabla u, \nabla v)_{\Omega} - k^2(u,v)_{\Omega} - i \theta (u,v)_{\GammaR}.
\end{align}   
The weak formulation of \eqref{eq:nematic_helmholtz_korteweg} then reads as: find $u \in V$ such that 
\begin{equation}\label{eq:Cont:WF}
   a(u,v) = (f,v)_{\Omega} \quad \forall v \in V. \tag{WF}
\end{equation}

\subsection{Fredholmness of the continuous problem}\label{subsec:WP}
The highest-order terms in \eqref{eq:cont:a} vanish when testing with a harmonic function, but the nematic term couples with the full Hessian. Thus, we need to ensure that all second order derivatives can be controlled by the Laplacian. In other words, we need the following elliptic regularity result as a critical ingredient of the forthcoming analysis. To this end, we assume the domain $\Omega \subset \mathbb{R}^d$ to either have a smooth boundary or to be a convex polygon. For two-dimensional domains with sound-soft boundary conditions, it is sufficient if $\Omega$ is polygonal. 

\begin{lemma}[Elliptic regularity]\label[lemma]{lem:ellipticReg}
   Suppose that $\Omega$ is sufficiently regular in the above sense. Then, there exists a constant $\CR \ge 1$ such that 
   \begin{align}\label{eq:ellipticReg}
      \Vert \hessian u \Vert_{\Omega} \le \CR \left( \Vert \Delta u \Vert_{\Omega} + \Vert u \Vert_{\Omega} \right), \qquad \forall u \in V. \tag{ER}
   \end{align}
   If $\partial \Omega = \GammaD$ and $\Omega \subset \mathbb{R}^d$, $d = 2,3$, is convex or $\Omega \subset \mathbb{R}^2$ is polygonal, the Miranda--Talenti inequality holds, i.e.,
   \begin{align}\label{eq:MT}
      \Vert \hessian u \Vert_{\Omega} \le \Vert \Delta u \Vert_{\Omega}, \qquad \forall u \in V = H^2(\Omega) \cap H^1_0(\Omega). \tag{MT}
   \end{align}
\end{lemma}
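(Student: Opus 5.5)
We would prove (MT) first and then obtain (ER) from standard elliptic regularity, since the Miranda--Talenti estimate is the sharper and more structural statement.

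\textbf{Step 1: the identity for smooth functions.} For $u \in H^2(\Omega)\cap H^1_0(\Omega)$ and smooth $\partial\Omega$, we integrate by parts twice:
\begin{align*}
\Vert \Delta u\Vert_\Omega^2 - \Vert \hessian u\Vert_\Omega^2 = \int_{\partial\Omega} \bigl(\Delta u\, \overline{\dn u} - \nabla u\cdot \overline{(\hessian u)\nv}\bigr)\, ds .
\end{align*}
Because $u=0$ on $\partial\Omega$, the tangential gradient vanishes, so $\nabla u = (\dn u)\nv$. Decomposing the Laplacian on the boundary as $\Delta u = \nHn{u} \text{ (with $\nv$ in place of $\bm n$)} + (d-1)H\,\dn u$, the boundary integral reduces to $\int_{\partial\Omega}(d-1)H\vert\dn u\vert^2\,ds$. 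Here $H$ is the mean curvature, which is nonnegative for convex domains.

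\textbf{Step 2: approximation for convex domains.} We take smooth convex domains $\Omega_m$ exhausting $\Omega$ and solve $-\Delta u_m = -\Delta u$ in $\Omega_m$ with zero Dirichlet data. The identity of Step 1 gives $\Vert\hessian u_m\Vert_{\Omega_m} \le \Vert\Delta u\Vert_{\Omega}$. We then pass to the limit using weak compactness, as in Grisvard's proof of $H^2$-regularity on convex domains.

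\textbf{Step 3: planar polygons.} For polygonal $\Omega\subset\mathbb{R}^2$, each boundary edge is straight and so contributes zero to the boundary integral. The remaining work is to handle the corners. We argue by density of functions in $H^2\cap H^1_0$ that are smooth up to the boundary and vanish near the corners. This density holds in 2D (Grisvard, Thm.~1.6.2 / Lemma 4.3.1.1–4.3.1.3), and for such functions the corner contributions vanish. This step is the main obstacle and we rely on Grisvard rather than redoing it.

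\textbf{Step 4: general (ER).} For a smooth boundary, or a convex domain with mixed conditions, we use standard $H^2$-regularity. We set $g=-\Delta u + u$, view $u$ as the solution of the corresponding Dirichlet or Neumann/Robin-type problem with trace data determined by $u$, and obtain $\Vert u\Vert_{H^2}\le C(\Vert\Delta u\Vert+\Vert u\Vert)$. More concretely, for $V$ with Dirichlet part we argue by contradiction. Suppose there were a sequence with $\Vert\hessian u_n\Vert=1$ and $\Vert\Delta u_n\Vert+\Vert u_n\Vert\to0$. The a priori $H^2$ bound, which holds for domains of this class, combined with compactness would force the limit to be a harmonic function with zero $L^2$ norm, which is a contradiction. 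Since $\Vert\hessian u\Vert \le \Vert u\Vert_{H^2}$, we may take $\CR\ge1$ without loss of generality.
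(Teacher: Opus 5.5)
\textbf{Main gap: Step 4 fails whenever $\GammaR \neq \emptyset$.} Functions in $V$ satisfy no boundary condition on $\GammaR$, so there is no homogeneous Neumann/Robin problem that $u$ solves. If you use the ``trace data determined by $u$'', the $H^2$ estimate picks up a term like $\Vert \dn u \Vert_{H^{1/2}(\GammaR)}$, which $\Vert \Delta u \Vert_\Omega + \Vert u \Vert_\Omega$ does not control.

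Your contradiction argument cannot rescue this, because (ER) is false on such $V$. Take the unit disk with $\GammaR = \partial\Omega$, so $V = H^2(\Omega)$. In polar coordinates, the harmonic functions $u_n = r^n \cos(n\phi)$ satisfy $\Delta u_n = 0$ and $\Vert \hessian u_n \Vert_\Omega / \Vert u_n \Vert_\Omega = 2n\sqrt{n^2-1} \to \infty$. Harmonic extensions of boundary data that oscillate inside $\GammaR$ and vanish on $\GammaD$ behave the same way whenever $\GammaR$ has nonempty relative interior, so a Dirichlet part does not help.

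The paper's proof has exactly the same limitation. The a priori bound $\Vert u \Vert_{H^2(\Omega)} \le C(\Vert \Delta u \Vert_\Omega + \Vert u \Vert_{H^1(\Omega)})$ it quotes from Grisvard requires a homogeneous boundary condition on all of $\partial\Omega$ (otherwise a boundary-data norm appears), and the same $u_n$ violate it. Both arguments, and the statement itself, are valid only when $\GammaD = \partial\Omega$.

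\textbf{The Dirichlet case.} Here your Step 4 is the paper's proof: an a priori estimate plus compactness, i.e.\ Ehrling's lemma. As written, though, your contradiction is not closed. A weak limit equal to zero does not contradict $\Vert \hessian u_n \Vert_\Omega = 1$. You must feed the strong convergence $u_n \to 0$ in $H^1$ back into the a priori estimate to get $1 \le C(\Vert \Delta u_n \Vert_\Omega + \Vert u_n \Vert_{H^1(\Omega)}) \to 0$. In fact no compactness is needed. For $u \in H^2(\Omega) \cap H^1_0(\Omega)$, the bound $\Vert \nabla u \Vert_\Omega^2 = -(\Delta u, u)_\Omega \le \Vert \Delta u \Vert_\Omega \Vert u \Vert_\Omega$ controls the $H^1$-term directly. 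On convex domains or planar polygons, (MT) gives (ER) with $\CR = 1$ outright.

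\textbf{Your proof of (MT).} This part is correct and takes a genuinely different route from the paper, which only cites the Miranda--Talenti inequality and a remark extending it to non-convex planar polygons. You reproduce the classical Grisvard argument:
\begin{enumerate}
\item the boundary integrand reduces to $(d-1)H\vert \dn u \vert^2 \ge 0$;
\item inner approximation by smooth convex domains;
\item for polygons, density of functions vanishing near the corners, for which the straight edges contribute nothing.
\end{enumerate}
This makes the mechanism explicit and even gives equality on polygons. The cost is length, and you still rely on Grisvard for the density lemma. Note that the Step 1 identity needs $H^3$ regularity to make sense of boundary traces of $\hessian u$; derive it there and extend to $H^2 \cap H^1_0$ by density.
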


\begin{proof}
   Classical elliptic regularity results \cite[Thm.~2.3.3.2]{Grisvard} yield that $\Vert u \Vert_{H^2} \le C( \Vert \Delta u \Vert_{\Omega} + \Vert u \Vert_{H^1(\Omega)})$. Ehrling's lemma \cite[Thm.~7.30]{RR04}, together with the compactness of the embeddings $H^2 \hookrightarrow H^1 \hookrightarrow L^2$, allows us to replace the $H^1$-norm with the $L^2$-norm. The special case \eqref{eq:MT} is the \emph{Miranda--Talenti} inequality, see, for instance, \cite{MPS00} or \cite[Rem.~2.37]{NSZ17Acta}, where it is shown that the convexity assumption is not necessary for two-dimensional polygonal domains.
\end{proof}

An immediate consequence of \eqref{eq:ellipticReg} is that the norm $\Vert \cdot \Vert_{V}$ is equivalent to the $H^2$-norm and that the sesquilinear form $a(\cdot,\cdot)$ is bounded with respect to $\Vert \cdot \Vert_{V}$.

The principal term $(\FOtermu,\Delta v)_{\Omega}$ has the same structure that frequently appears in the analysis of second-order PDEs in non-divergence form, see, for instance, \cite{SmearsSueli13,NSZ17Acta,S26}.
A crucial ingredient in the analysis of those problems is the \emph{Cordes condition} \cite{C56}, which limits the anisotropy induced by the matrix $\Am$. 
We work directly with its quantitative form and require that $\vert \gamma \Am - \Idm \vert_{F} < 1$. The relevance of this quantity stems from the pointwise estimate
\begin{align}\label{eq:cordesEstimate}
   \vert \gamma \FOtermu - \Delta u \vert \le \vert \gamma \Am - \Idm \vert_{F} \vert \hessian u \vert_{F}, \quad \forall u \in V, \text{ a.e. in } \Omega. 
\end{align}
In the setting of \cite{SmearsSueli13,NSZ17Acta,S26}, the right-hand side of \eqref{eq:cordesEstimate} can be estimated with \eqref{eq:MT} and $\vert \gamma \Am - \Idm \vert_{F} < 1$ is sufficient to obtain \eqref{eq:Abstract:Garding}. In the general case, the elliptic regularity constant enters the condition.

\begin{lemma}[G\r{a}rding inequality]\label[lemma]{lem:weakcoercivity}
   Assume that $\CR \vert \gamma \Am - \Idm \vert_{F} < 1$. Then, there exist constants $C_{V}, C_{L^2}  > 0$ such that 
   \begin{align*}
      \Re a(u,u) \ge C_V \Vert u \Vert^2_{V} - C_{L^2} \Vert u \Vert^2_{\Omega} \qquad \forall u \in V.
   \end{align*} 
\end{lemma}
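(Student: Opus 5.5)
The plan is to test $a(u,u)$ and split the principal term using the Cordes scaling $\gamma$. Since $\gamma>0$, write
\begin{align*}
   \gamma(\FOtermu,\Delta u)_{\Omega} = \Vert \Delta u \Vert_{\Omega}^2 + (\gamma\, \FOtermu - \Delta u, \Delta u)_{\Omega}.
\end{align*}
Multiplying by $\gamma^{-1}$ and taking real parts gives
\begin{align*}
   \Re a(u,u) \ge{}& \gamma^{-1}\Vert \Delta u\Vert_{\Omega}^2 - \gamma^{-1}\Vert \gamma\,\FOtermu - \Delta u\Vert_{\Omega}\Vert\Delta u\Vert_{\Omega} \\
   &+ \Vert \nabla u\Vert_{\Omega}^2 - k^2\Vert u\Vert_{\Omega}^2 .
\end{align*}
The impedance term $-i\theta(u,u)_{\GammaR}$ is purely imaginary, so it drops out of the real part.

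Next, bound the perturbation pointwise by \eqref{eq:cordesEstimate} and integrate. With $\delta\coloneqq\vert\gamma\Am-\Idm\vert_F$ this gives $\Vert\gamma\,\FOtermu-\Delta u\Vert_\Omega\le \delta\Vert\hessian u\Vert_\Omega$. Then \eqref{eq:ellipticReg} yields
\begin{align*}
   \delta\Vert\hessian u\Vert_\Omega \le \delta\CR\Vert\Delta u\Vert_\Omega+\delta\CR\Vert u\Vert_\Omega .
\end{align*}
Inserting this, the principal part is bounded below by
\begin{align*}
   \gamma^{-1}(1-\CR\delta)\Vert\Delta u\Vert_\Omega^2-\gamma^{-1}\CR\delta\Vert u\Vert_\Omega\Vert\Delta u\Vert_\Omega .
\end{align*}
The key point is that $1-\CR\delta>0$ by assumption. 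Apply Young's inequality to the cross term with weight $\varepsilon=(1-\CR\delta)/2$. This absorbs half of the good Laplacian term and leaves a penalty $C(\gamma,\CR,\delta)\Vert u\Vert_\Omega^2$.

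Collecting terms, we get
\begin{align*}
   \Re a(u,u)\ge \tfrac{1-\CR\delta}{2}\gamma^{-1}\Vert\Delta u\Vert^2_\Omega+\Vert\nabla u\Vert_\Omega^2-(k^2+C)\Vert u\Vert^2_\Omega .
\end{align*}
To recover the full $V$-norm, add and subtract $\Vert u\Vert_\Omega^2$, which accounts for the $L^2$ part of the $H^1$-norm. This gives the claim with
\begin{align*}
   C_V=\min\{\tfrac{1-\CR\delta}{2},1\}, \qquad C_{L^2}=k^2+C+1 .
\end{align*}

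The only delicate step is the second one. Here the lower-order remainder $\Vert u\Vert_\Omega$ in \eqref{eq:ellipticReg} produces a cross term that must be absorbed by Young's inequality without destroying the coercive factor $1-\CR\delta$. This is exactly where the hypothesis $\CR\delta<1$ is needed. In the Miranda--Talenti case \eqref{eq:MT}, this lower-order remainder disappears entirely.
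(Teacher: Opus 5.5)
Your proposal is correct and follows the paper's proof essentially step by step. Both use the same $\gamma$-splitting of the principal term, the Cordes estimate \eqref{eq:cordesEstimate} combined with \eqref{eq:ellipticReg}, a weighted Young inequality to absorb the cross term, and adding and subtracting $\Vert u \Vert_\Omega^2$; the only difference is that you fix the Young weight at $(1-\CR\delta)/2$, while the paper keeps a free $\epsilon$ with $\CR\delta(1+\epsilon)<1$.
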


\begin{proof} 
   By assumption, $\dCr \coloneqq \CR \vert \gamma \Am - \Idm \vert_{F} < 1$, so that we can fix $\epsilon > 0$ with $\dCr(1+\epsilon) < 1$. The Cauchy--Schwarz inequality, the pointwise estimate \eqref{eq:cordesEstimate}, \Cref{lem:ellipticReg}, and a weighted Young's inequality yield that
   \begin{equation}\label{eq:ContCoercivity:CordesConsequences}
      \begin{aligned}
         \left\vert (\gamma \FOtermu - \Delta u, \Delta u)_{\Omega} \right\vert &\le \vert \gamma \Am - \Idm \vert_{F} \Vert \hessian u \Vert_{\Omega} \Vert \Delta u \Vert_{\Omega} \\
         &\le \CR \vert \gamma \Am - \Idm \vert_{F} \left( \Vert \Delta u \Vert_{\Omega}^2 + \Vert u \Vert_{\Omega} \Vert \Delta u \Vert_{\Omega} \right) \\
         &\le \dCr (1 + \epsilon) \Vert \Delta u \Vert^2_{\Omega} + \frac{\dCr}{4 \epsilon} \Vert u \Vert_{\Omega}^2.
      \end{aligned}
   \end{equation}
   Consequently, we obtain that 
   \begin{equation}\label{eq:ContCoercivity:AD2Estimate}
      \begin{aligned}
         \Re (\FOtermu,\Delta u)_{\Omega} &= \frac{1}{\gamma} \Vert \Delta u \Vert_{\Omega}^2 + \frac{1}{\gamma} \Re (\gamma \FOtermu - \Delta u, \Delta u)_{\Omega} \\
         &\stackrel{\eqref{eq:ContCoercivity:CordesConsequences}}{\ge} \frac{1}{\gamma} \left( 1 - \dCr (1 + \epsilon) \right) \Vert \Delta u \Vert_{\Omega}^2 - \frac{\dCr}{4 \gamma \epsilon} \Vert u \Vert_{\Omega}^2.
      \end{aligned}
   \end{equation}
   Since the Robin term in \eqref{eq:cont:a} is purely imaginary, adding and subtracting $\Vert u \Vert_{\Omega}^2$ yields that 
   \begin{align*}
      \Re \{ a(u,u) \} &\ge \frac{ 1 - \dCr (1 + \epsilon)}{\gamma} \Vert \Delta u \Vert^2_{\Omega} + \Vert \nabla u \Vert^2_{\Omega} + \Vert u \Vert^2_{\Omega} - (1 + k^2 + \dCr (4 \gamma \epsilon)^{-1}) \Vert u \Vert^2_{\Omega} \\
      &\ge \min \{ 1 - \dCr (1 + \epsilon), 1 \} \Vert u \Vert^2_{V} - C_{L^2} \Vert u \Vert^2_{\Omega},
   \end{align*}
   which proves the claim with $C_V = 1 - \dCr(1+\epsilon)$ and $C_{L^2} = 1 + k^2 + \dCr (4 \gamma \epsilon)^{-1}$.  
\end{proof}

Since $V \hookrightarrow W \coloneqq L^2(\Omega)$ with compact embedding due to Rellich's theorem, \Cref{lem:GardingsInequality} applies under the assumptions of \Cref{lem:weakcoercivity}, so that the operator associated with \eqref{eq:cont:a} is Fredholm with index zero. Before discussing injectivity, we investigate the smallness assumption on $\Am$. 

For $\alpha > 0$ and $\beta > - \alpha$, the matrix $\Am \coloneqq \alpha \Idm + \beta \bm{n} \otimes \bm{n} \in \mathbb{R}^{d \times d}$ is symmetric positive definite with 
\begin{align}\label{eq:AmProps}
   (\lambda_1, \dots, \lambda_{d-1}, \lambda_d) = (\alpha, \dots, \alpha,\alpha + \beta),  \qquad \vert \Am \vert^2_{F} = d \alpha^2 + 2 \alpha \beta + \beta^2, \qquad \operatorname{tr} \Am = d \alpha + \beta.
\end{align}
This allows us to derive an explicit smallness condition on $\beta$ in terms of $\alpha$, the elliptic regularity constant $\CR$, and the dimension $d$.

\begin{lemma}\label[lemma]{lem:alphabetaConditionImpliesCordes}
   We have that $\vert \gamma \Am -\Idm \vert^2_{F} = (d-1) \beta^2 \vert \Am \vert_{F}^{-2}$.
   Further, $\CR \vert \gamma \Am - \Idm \vert_{F} < 1$ if and only if 
   \begin{align}\label{eq:alphabetaCondition}
      \beta \in (\alpha \mu^{-1}(1-\sqrt{1+d\mu}),\alpha \mu^{-1}(1+\sqrt{1+d\mu})), \qquad \mu \coloneqq \CR^2(d-1)-1,
   \end{align}
   where, for $\mu = 0$, the interval is understood in the sense of the limit $\mu \to 0^+$.
\end{lemma}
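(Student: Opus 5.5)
Both claims reduce to explicit algebra, using the eigenstructure \eqref{eq:AmProps}. For the identity, we expand
\[
\vert \gamma \Am - \Idm \vert_F^2 = \gamma^2 \vert \Am \vert_F^2 - 2\gamma \operatorname{tr}\Am + d .
\]
Since $\gamma = \operatorname{tr}\Am / \vert \Am\vert_F^2$, this equals $d - (\operatorname{tr}\Am)^2/\vert\Am\vert_F^2$. Inserting $\operatorname{tr}\Am = d\alpha+\beta$ and $\vert\Am\vert_F^2 = d\alpha^2+2\alpha\beta+\beta^2$, the numerator becomes
\[
d(d\alpha^2+2\alpha\beta+\beta^2) - (d\alpha+\beta)^2 = (d-1)\beta^2 ,
\]
which gives the first claim. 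The formulas for $\operatorname{tr}\Am$ and $\vert \Am\vert_F^2$ follow directly from $\Am = \alpha\Idm + \beta\bm n\otimes\bm n$ with $\vert\bm n\vert = 1$, so they hold for every real $\beta$. The only requirement is $\vert\Am\vert_F \neq 0$, which holds because $\alpha>0$ and $\vert \Am\vert_F^2 = (d-1)\alpha^2 + (\alpha+\beta)^2$.

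For the equivalence, square the condition $\CR\vert\gamma\Am-\Idm\vert_F<1$; both sides are nonnegative, so this is legitimate. Multiplying by $\vert\Am\vert_F^2>0$ gives
\[
\CR^2(d-1)\beta^2 < d\alpha^2+2\alpha\beta+\beta^2, \qquad\text{i.e.}\qquad \mu\beta^2 - 2\alpha\beta - d\alpha^2 < 0 .
\]
When $\mu>0$, this quadratic in $\beta$ has discriminant $4\alpha^2(1+d\mu)>0$ and roots $\alpha\mu^{-1}(1\pm\sqrt{1+d\mu})$. Since the leading coefficient is positive, the inequality holds exactly between the roots, which is \eqref{eq:alphabetaCondition}.

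Since $\CR\ge1$ and $d\ge2$, we have $\mu \ge d-2 \ge 0$. The degenerate case $\mu=0$ therefore arises only when $d=2$ and $\CR=1$. There the inequality is linear, $-2\alpha\beta - d\alpha^2<0$, i.e.\ $\beta > -d\alpha/2$. To match the stated convention, I will check that as $\mu\to0^+$ the lower endpoint $\alpha\mu^{-1}(1-\sqrt{1+d\mu}) = -d\alpha/(1+\sqrt{1+d\mu})$ tends to $-d\alpha/2$, while the upper endpoint tends to $+\infty$. This reproduces the statement in the introduction that $\beta>-\alpha$ suffices when $d=2$ with \eqref{eq:MT}.

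The only step needing any care is this degenerate case and the sign bookkeeping: $\mu$ could in principle be negative, but $\CR\ge1$ and $d\ge2$ rule that out. The rest is routine algebra.
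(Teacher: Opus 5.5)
Your proof is correct and follows the same route as the paper: expand $\vert\gamma\Am-\Idm\vert_F^2$, use the definition of $\gamma$ to obtain $d - (\operatorname{tr}\Am)^2/\vert\Am\vert_F^2$, and reduce the condition to the sign of the quadratic $\mu\beta^2 - 2\alpha\beta - d\alpha^2$. You treat the degenerate case $\mu=0$ explicitly, checking that the linear inequality $\beta > -d\alpha/2$ coincides with the limiting interval; this is slightly more careful than the paper, which only declares that this case is to be read as a limit.
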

\begin{proof}
   We assume $\mu > 0$ and calculate with the definition of $\gamma$ that
   \begin{align*}
      \vert \Am \vert^2_{F} \vert \gamma \Am - \Idm \vert_{F}^2 = \vert \Am \vert^2_{F} \left( \gamma^2 \vert \Am \vert^2_{F} - 2 \gamma \operatorname{tr}(\Am) + d \right) = d \vert \Am \vert^2_{F} - (\operatorname{tr}(\Am))^2. 
   \end{align*}
   After inserting \eqref{eq:AmProps}, some algebraic manipulations yield the first claim. Using this expression, we have that $\CR^2 \vert \gamma \Am - \Idm \vert^2_{F} < 1$ if and only if 
   \begin{align*}
      \CR^2(d-1)\beta^2 < \vert \Am \vert^2_{F} = d \alpha^2 + 2 \alpha \beta + \beta^2 \quad \Leftrightarrow \quad q(\beta) \coloneqq \mu \beta^2 - 2\alpha \beta - d \alpha^2 < 0.
   \end{align*}
   Since $\mu > 0$ for $d \ge 2$, $\CR \ge 1$, the inequality is satisfied for $\beta_{-} < \beta < \beta_{+}$, where $\beta_{-}$ and $\beta_{+}$ are the negative and positive roots of the quadratic polynomial $q(\cdot)$. Solving for the roots with the quadratic formula yields \eqref{eq:alphabetaCondition}. The special case $\mu = 0$ only occurs when $d = 2$ and $\CR = 1$, in which case we interpret $\mu = 0$ as a limit $\mu \to 0$.
\end{proof}

For large elliptic regularity constants $\CR$, the condition \eqref{eq:alphabetaCondition} enforces that $\vert \beta \vert = \mathcal{O}(\alpha \CR^{-1})$. For convex domains with $\GammaD = \partial \Omega$, we may take $\CR  = 1$ due to \eqref{eq:MT}.  In this case, for $d = 2$ we have $\mu = 0$ and hence $\beta \in (-\alpha, \infty)$; that is, whenever \eqref{eq:MT} applies in the two-dimensional setting, the smallness requirement is satisfied for every admissible $\beta > -\alpha$. For $d = 3$, we obtain $\beta \in (-\alpha, 3\alpha)$. In the general setting, we conclude from \Cref{lem:alphabetaConditionImpliesCordes} and \Cref{lem:weakcoercivity} that the operator associated with \eqref{eq:cont:a} is Fredholm with index zero provided that $\beta$ satisfies \eqref{eq:alphabetaCondition}. To establish the well-posedness of the problem, we need to show that the operator is injective. We recall the well-posedness result from \cite{FvBZ25}.

\begin{lemma}[Injectivity]
   The operator associated with \eqref{eq:cont:a} is injective for all non-resonant wave numbers if $\vert \beta \vert < C_{\beta = 0}$, where $C_{\beta = 0}$ depends on the stability constant of the isotropic problem. 
\end{lemma}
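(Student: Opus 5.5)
The plan is a perturbation argument around the isotropic case $\beta = 0$. Write $a = a_0 + \beta\, b$, where $a_0$ is the sesquilinear form \eqref{eq:cont:a} with $\Am = \alpha \Idm$, and
\begin{align*}
   b(u,v) \coloneqq (\nHn{u}, \Delta v)_{\Omega}.
\end{align*}
For a non-resonant wave number $k$, we take as given (this is the meaning of "non-resonant" and the content of the cited analysis in \cite{FvBZ25}) that the isotropic operator $A_0$ is injective. Since the isotropic form also satisfies \Cref{lem:weakcoercivity}, with $\Am - \Idm$ contributions vanishing, \Cref{lem:GardingsInequality} makes $A_0$ Fredholm of index zero and hence an isomorphism. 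Its inverse gives an inf-sup constant $c_0 > 0$, with $\Vert A_0^{-1} \Vert = c_0^{-1}$, in the norm $\Vert \cdot \Vert_V$; this $c_0$ is the isotropic stability constant.

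Next I will bound the perturbation. By Cauchy--Schwarz, $\vert \nHn{u} \vert \le \vert \hessian u \vert_{F}$ pointwise since $\vert \bm{n} \vert = 1$, so
\begin{align*}
   \vert b(u,v) \vert \le \Vert \hessian u \Vert_{\Omega} \Vert \Delta v \Vert_{\Omega}.
\end{align*}
Applying \eqref{eq:ellipticReg} to $\Vert \hessian u \Vert_{\Omega}$ and using the definition \eqref{eq:def:V} of $\Vert \cdot \Vert_V$ then gives
\begin{align*}
   \vert b(u,v) \vert \le C_b \Vert u \Vert_V \Vert v \Vert_V, \qquad C_b \lesssim \CR\, \gamma .
\end{align*}
The factor $\gamma$ is taken at $\beta = 0$, or bounded uniformly for small $\beta$. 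So the operator $B$ associated with $b$ satisfies $\Vert B \Vert \le C_b$.

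The conclusion is a Neumann series argument. We have $A = A_0(I + \beta A_0^{-1} B)$, and $\Vert \beta A_0^{-1} B \Vert \le \vert \beta \vert C_b / c_0 < 1$ whenever $\vert \beta \vert < C_{\beta=0} \coloneqq c_0 / C_b$. In that regime $A$ is invertible, in particular injective. Equivalently, if $Au = 0$ then $A_0 u = -\beta B u$, so $c_0 \Vert u \Vert_V \le \vert \beta \vert C_b \Vert u \Vert_V$, which forces $u = 0$.

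The main obstacle is bookkeeping rather than depth. The norm $\Vert \cdot \Vert_V$ depends on $\beta$ through $\gamma$, so I will fix the comparison norm at the isotropic value and use the equivalence from \Cref{lem:ellipticReg} to absorb this dependence into the constants. I also need the invertibility of $A_0$ for non-resonant $k$; I will cite it from \cite{FvBZ25} rather than reprove it. The resulting constant $C_{\beta=0}$ is proportional to $c_0$, and hence degenerates near isotropic resonances.
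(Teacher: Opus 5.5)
Your proposal is correct and follows essentially the same route as the paper, whose proof simply invokes \cite[Thm.~3.2]{FvBZ25}: a Kato-type perturbation argument around the well-posed isotropic problem, which you make explicit through the splitting $a = a_0 + \beta b$, the bound on $b$ via \eqref{eq:ellipticReg}, and a Neumann series. One cosmetic correction: with the norm fixed at $\gamma = 1/\alpha$, the perturbation constant is $C_b \le \CR(\gamma + \gamma^{1/2})$ rather than $\lesssim \CR\gamma$, and this does not affect the argument.
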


\begin{proof}
   The statement follows from \cite[Thm.~3.2]{FvBZ25}. Essentially, the argument is based on a classical perturbation argument \cite{Kato66}, where we first show that the isotropic problem with $\beta = 0$ is well-posed and then infer well-posedness of \eqref{eq:Cont:WF} for all $\beta$ such that $\vert \beta \vert < C_{\beta = 0}$.
\end{proof}

The constant $C_{\beta = 0}$ degenerates as $k^2$ approaches an eigenvalue of the isotropic operator $\alpha \Delta^2 - \Delta$, forcing $\beta \to 0$. However, we expect the anisotropic problem with $\beta > 0$ to have its own set of resonant frequencies $\mathcal{K}_{\text{res}}$ related to the eigenvalues of the operator $\Delta(\frobip{\Am}{\hessian}) - \Delta$, cf.~also \Cref{rem:Eigenvalues}. We therefore make the following assumption. 

\begin{assumption}[Injectivity under \eqref{eq:alphabetaCondition}]\label[assumption]{ass:injectivity}
   The operator associated with \eqref{eq:cont:a} is injective for all $\beta$ satisfying \eqref{eq:alphabetaCondition} and all non-resonant wave numbers $k^2 \not \in \mathcal{K}_{\text{res}}$.
\end{assumption}

A proof of the injectivity assumed in \Cref{ass:injectivity} is beyond the scope of this article. The non-symmetric volume term introduces compounding difficulties: a spectral analysis of non-symmetric perturbations of self-adjoint operators requires additional technical machinery \cite{CT16}, and classical uniqueness arguments for impedance problems, such as unique continuation \cite[Lem.~35.10]{EG_FE2} or Morawetz--Rellich identities \cite{GPS19}, do not carry over directly  because the nematic term contributes to the imaginary part $\Im a(u,u)$.

\begin{remark}[Error blow-up at resonant $k^2$]\label{rem:Eigenvalues}
   Let $\Omega = [0,1]^2$, $\GammaD = \partial \Omega$, and $\bm{n} = (1,0)^{\top}$. A direct calculation shows that the functions $e_{mn} \coloneqq \sin(\pi m x) \sin(\pi ny)$, $m, n \in \mathbb{N}_{\ge 1}$ are eigenfunctions of $\Delta(\frobip{\Am}{\hessian}) - \Delta$ with associated eigenvalues
   \begin{align}\label{eq:EValsUnitSquare}
      \lambda_{mn}^{\beta} \coloneqq \pi^4 \left( \alpha (m^2 + n^2)^2 + \beta m^2(m^2 + n^2) \right) + \pi^2 (m^2 + n^2).
   \end{align}
   \Cref{tab:eigenvalues} lists the first four eigenvalues for several values of $\beta$. For larger values of $\beta$, the spectrum shifts significantly; in particular, $\lambda_{mn}^{\beta} \neq \lambda_{nm}^{\beta}$ for $\beta > 0$. Moreover, in \Cref{fig:EVBlowUp} we observe that the discretization error blows up close to resonant wave numbers $k^2 \approx \lambda_{mn}^{\beta}$. For larger values of $\beta$, the blow-up occurs when $k^2 \approx \lambda_{mn}^{\beta}$, but not when $k^2 \approx \lambda_{mn}^{0}$. This supports \Cref{ass:injectivity}, as the stability of the problem with $\beta > 0$ does not appear to deteriorate when $C_{\beta = 0} = \mathcal{O}(\vert k^2 - \lambda^0_{mn} \vert)$ becomes small.
   \begin{figure}[!htbp]
      \centering
      \begin{subfigure}{0.47\textwidth}
          \centering
          \includegraphics[width=\linewidth]{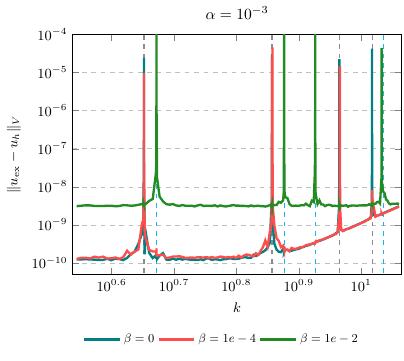}
          \caption{Discretization error for $k \in (3.5,11.5)$ and $\beta \in \{0, 10^{-4}, 10^{-2}\}$.}
          \label{fig:EVBlowUp}
      \end{subfigure}
      \hfill
      \begin{subfigure}{0.47\textwidth}
          \centering
          \vspace*{-2cm}
          \begin{tabularx}{\linewidth}{l *{3}{>{\raggedleft\arraybackslash}X}}
              \toprule
              & $\beta = 0$ & $\beta = 10^{-4}$ & $\beta = 10^{-2}$ \\
              \midrule
              $\lambda_{11}^{\beta}$ & $20.128$  & $20.148$ & $22.077$ \\
              $\lambda_{12}^{\beta}$ & $51.783$ & $51.831$ & $56.653$ \\
              $\lambda_{21}^{\beta}$ & $51.783$ & $51.978$ & $71.265$ \\
              $\lambda_{22}^{\beta}$ & $85.191$ & $85.502$ & $116.361$ \\
              \bottomrule
          \end{tabularx}
          \vspace{2cm}
          \caption{First four eigenvalues computed with \eqref{eq:EValsUnitSquare} for $\alpha = 10^{-3}$ and different values of $\beta$.}
          \label{tab:eigenvalues}
      \end{subfigure}
      \caption{Error blow-up at $k^2 \approx \lambda_{mn}^{\beta}$ and corresponding eigenvalues for different values of $\beta$ with fixed $\alpha = 10^{-3}$. We discretize \eqref{eq:Cont:WF} with the discretization presented in \Cref{subsec:HIP} on a fixed mesh with polynomial degree $p = 7$ and calculate the error against the reference solution \eqref{eq:planewave}.}
      \label{fig:EVals:combined}
  \end{figure}
\end{remark}

\begin{remark}[Smoothly varying coefficients]
  With minor modifications, the analysis, and in particular \Cref{lem:weakcoercivity}, extends to smoothly varying bounded coefficients $\alpha, \beta \in C^{\infty}(\overline{\Omega})$ with $0 < C_{\min}^{\alpha} \le \alpha(x) \le C_{\max}^{\alpha}$ and $C_{\min}^{\beta} \le \beta(x) \le C_{\max}^{\beta}$. In this case, $\gamma$ becomes a function of $x$, and we require that $\alpha + \beta > 0$ and $\vert \gamma \Am - \Idm \vert_{F} < 1$ hold pointwise a.e. in $\Omega$.
\end{remark}

\section{The discrete setting}\label{subsec:DWF}
In \Cref{subsec:HIP}, we introduce a discretization of \eqref{eq:nematic_helmholtz_korteweg} with a non-conforming $C^{0}$-hybrid interior penalty ($C^{0}$-HIP) method. In \Cref{subsec:Projections}, we construct the projection operators that allow us to apply the framework from \Cref{sec:prelim} and to show convergence estimates in \Cref{thm:convergenceEst}.

\subsection{$C^{0}$-hybrid interior penalty discretization}\label{subsec:HIP}
For polynomial degree $p \ge 2$, we define the discrete space $V_h \coloneqq V_{\Th} \times V_{\Fh}$ with
\begin{align}\label{eq:def:VhSpaces}
   V_{\Th} \coloneqq \mathbb{P}^p(\Th) \cap \{ v \in H^1(\Omega) : v = 0 \text{ on } \GammaD \}, \qquad
   V_{\Fh} \coloneqq \mathbb{P}^{p-1}(\Fh),
\end{align}
that is, $V_{\Th}$ is a Lagrange finite element space of degree $p$ on the domain $\Omega$ and $V_{\Fh}$ is a space of piecewise polynomials of degree $p-1$ on the collection of facets $\Fh$. The facet unknowns represent the normal derivative of the volume unknowns.
\Cref{fig:notationAndSpace} shows the degrees of freedom of $V_h$ for $p = 4$ and recalls the notation introduced in \Cref{sec:notation}.

\colorlet{Ldofs}{teal!90!black}
\colorlet{Fdofs}{red!75!black}

\begin{figure}[!htbp]
   \begin{center}
      \resizebox{0.99\textwidth}{!}{
      \begin{tikzpicture}
         \node at (0,0) {\includegraphics[width=0.98\textwidth]{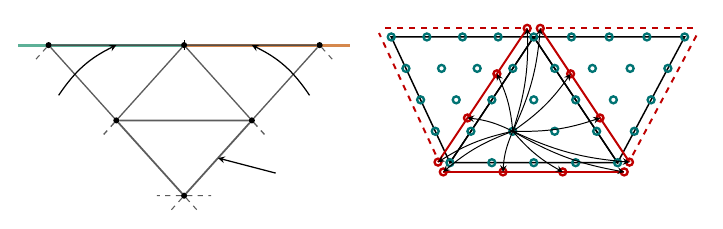}};
         \node at (-0.45,1.65) {\footnotesize $\GammaD$};
         \node at (-5.8,1.65) {\footnotesize$\GammaR$};
         \node at (-6.2,0.15) {\footnotesize $\in \Fh^{R}$};
         \node at (-0.75,0.15) {\footnotesize $\in \Fh^{D}$};
         \node[draw, circle,very thick,inner sep=1.25pt,color=Ldofs] (ID) at (1.65,-1.8) {};
         \node at ($(ID)+(1.45,-0.0125)$) {\footnotesize interior dof ($p = 4$)};
         \node[draw, circle,very thick,inner sep=1.25pt,color=Fdofs] at ($(ID)+(0,-0.5)$) {};
         \node at ($(ID)+(0,-0.5)+(1.29,-0.0125)$) {\footnotesize facet dof ($p = 3$)};
         \node at (-3.6,-0.4) {\footnotesize $T \in \Th$};
         \node at (-1,-1.3) {\footnotesize $\partial T \in \partial \Th$};
      \end{tikzpicture}
      }
   \end{center}
   \caption{In the left part of the figure, we recall the setup and notation as introduced in \Cref{sec:notation}. In the right part of the figure, we display the degrees of freedom (dofs) of $V_h$ for $p = 4$. In two dimensions, the volume unknowns have $(p+2)(p+1)/2$ dofs per element, where neighboring elements share the dofs on the common facet, and the facet unknowns have $p$ dofs per facet. The interior element bubbles couple only through the facet unknowns and can be eliminated by static condensation.}
   \label{fig:notationAndSpace}
\end{figure}

We write $\uh = (\uvol, \ufac) \in V_h$ and $\vh = (\vvol, \vfac) \in V_h$, where $\uvol,\vvol \in V_{\Th}$ denote the volume components and $\ufac, \vfac \in V_{\Fh}$ denote the facet components. Then, we define the HDG-jump operator by
\begin{align}
   \hdgjump{\dn u_h} \coloneqq \dn \uvol - \ufac, \qquad (\uvol,\ufac) \in V_h,
\end{align} 
and the stabilization term and associated semi-norm by
\begin{align}
   \sterm_h(\uh,\vh) &\coloneqq ( h^{-1} \hdgjump{\dn u_h}, \hdgjump{\dn v_h})_{\partial \Th}, \quad \vert \uh \vert_{\sterm_h}^2 \coloneqq \sterm_h(\uh,\uh), \qquad \uh, \vh \in V_h.
\end{align}
For a stabilization parameter $\stab > 0$, we then define the discrete sesquilinear form $a_h(\cdot,\cdot) : V_h \times V_h \to \mathbb{C}$ as 
\begin{equation}
   \begin{aligned}
      a_h(\uh,\vh) &\coloneqq (\FOtermuh,\Delta \vvol)_{\Th} - (\FOtermuh,\hdgjump{\dn \vvol})_{\partial \Th} + \stab \sterm_h(\uh,\vh) \\
      &\qquad+ (\nabla \uvol, \nabla \vvol)_{\Omega}  - k^2(\uvol,\vvol)_{\Omega} - i \theta (\uvol,\vvol)_{\Fh^{R}}. 
   \end{aligned}
\end{equation} 
In contrast to the classical symmetric interior penalty method, we do not add a symmetrization term, since the volume term itself is non-symmetric; this is similar to the incomplete interior penalty method, cf.~\cite{DSW04,DPE12}. The discrete weak formulation reads as: find $\uh \in V_h$ such that 
\begin{align}\label{eq:discrete_weak_form}
   a_h(\uh,\vh) = (f,\vvol)_{\Omega} \quad \forall \vh \in V_h. \tag{WF$_h$}
\end{align}

Similar to the reconstruction operator used in HHO methods \cite{DPE15,DE24C0HHO}, we define a lifting operator $\lift : V_h \to \mathbb{P}^{p-2}(\Th)$ by
\begin{align}\label{eq:def:lifting}
   (\psi_h,\lift \vh)_{\Th} \coloneqq - (\psi_h,\hdgjump{\dn \vvol})_{\partial \Th} \qquad \forall \psi_h \in \mathbb{P}^{p-2}(\Th).
\end{align}
By thee Cauchy--Schwarz and the discrete trace inequality, we have that 
\begin{align}\label{eq:LiftBounded}
   \Vert \lift \vh \Vert_{\Th} \le \Clift  \vert \vh \vert_{\sterm_h} \qquad \forall \vh \in V_h.
\end{align}
We further define a discrete Laplacian element-wise for $T \in \Th$ through $(\Delta_h \vh) \vert_{T} \coloneqq \Delta \vvol + \lift \vh 
\in \mathbb{P}^{p-2}(\Th)$, $\vh \in V_h$. Since $\FOtermuh \in \mathbb{P}^{p-2}(\Th)$, we may choose $\psi_h = \FOtermuh$ in \eqref{eq:def:lifting} and rewrite the sesquilinear form $a_{h}(\cdot,\cdot)$ as 
\begin{equation}\label{eq:ahLift}
   \begin{aligned}
      a_{h}(\uh,\vh) &= (\FOtermuh,\Delta_h \vh)_{\Th} + \stab \sterm_h(\uh,\vh) \\
      &\qquad+ (\nabla \uvol, \nabla \vvol)_{\Omega} - k^2(\uvol,\vvol)_{\Omega} - i \theta (\uvol,\vvol)_{\Fh^{R}}.
   \end{aligned}
\end{equation}

Finally, we equip the discrete space $V_h$ with the following inner product
\begin{align*}
   (\uh,\vh)_{V_h} \coloneqq \gamma^{-1} (\Delta_h \uh,\Delta_h \vh)_{\Th} + (\uvol,\vvol)_{H^1(\Omega)} + \gamma^{-1} \sterm_h(\uh,\vh), 
\end{align*}
and define the induced discrete norm through $\Vert \uh \Vert^2_{V_h} \coloneqq (\uh,\uh)_{V_h}$, $\uh \in V_h$.

\subsection{Projection operators}\label{subsec:Projections}
To apply the theory from \Cref{sec:prelim}, in particular \Cref{thm:discreteGarding}, we need to construct a suitable projection operator that satisfies \eqref{eq:pihCondition}.
Since $u \in V \subset H^2(\Omega)$, the jump of the normal derivative across interfaces is well-defined and continuous. Formally, we identify $u \in V$ with a tuple $\u \in V \times L^2(\Fh)$ via the mapping 
\begin{align*}
   V \to V \times L^2(\Fh), \quad u \mapsto \u \coloneqq (u, \operatorname{tr}_{\Fh} (\dn u)),
\end{align*}
so that $\hdgjump{\dn u} = 0$ by construction. Setting $\Delta_h \u = \Delta u$, the discrete norm $\Vert \cdot \Vert_{V_h} $ is well-defined for such tupless $\u \in V \times L^2(\Fh)$ with $\Vert \u \Vert_{V_h} = \Vert u \Vert_{V}$, and the vanishing jump implies that $\sterm_h(\u,\vh) = 0$ for all $\vh \in V_h$.

Next, we introduce a suitable interpolation operator $\Pih : V \cap H^{2+s}(\Omega) \to V_h$, $s \ge 0$. For each element $T \in \Th$, we define
\begin{align}
   \Pih \vert_{T} : V \cap H^{2+s}(T) \to \mathbb{P}^p(T) \times \mathbb{P}^{p-1}(\partial T), \quad u \mapsto (\Pi_h^p u, \Pi_F^{p-1} (\dn u)),
\end{align}
where $\Pi_h^p$ is the Lagrange interpolation operator and $\Pi_F^{p-1}$ is the $L^2$-orthogonal projection onto $\mathbb{P}^{p-1}(F)$, $F \in \Fh$. The Lagrange interpolation operator is well-defined on $V$, since $H^2(\Omega) \hookrightarrow C^0(\overline{\Omega})$ for $d \in \{2,3\}$. 

\begin{lemma}\label[lemma]{lem:PihApprox}
   For all $u \in V \cap H^{2+s}(\Omega)$, $s \ge 0$, we have that 
   \begin{align*}
      \Vert \u - \Pih u \Vert_{V_h} \le C h^{\min\{p-1,s\}} \Vert u \Vert_{H^{2+s}(\Omega)}.
   \end{align*}
\end{lemma}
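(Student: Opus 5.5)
We split $\u - \Pih u = (e, \delta)$ with volume error $e \coloneqq u - \Pi_h^p u$ and facet error $\delta \coloneqq \dn u - \Pi_F^{p-1}(\dn u)$. By definition of $\Vert \cdot \Vert_{V_h}$ we must bound three contributions: the $H^1$ part $\Vert e \Vert_{H^1(\Omega)}$, the discrete Laplacian part $\gamma^{-1/2}\Vert \Delta u - \Delta_h \Pih u \Vert_{\Th}$, and the stabilization part $\gamma^{-1/2}\vert \u - \Pih u\vert_{\sterm_h}$. Here $\gamma$ depends only on $\Am$; we absorb it into the constant, or we observe that the lemma is meant with constants possibly depending on $\gamma$. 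Standard Lagrange interpolation estimates give $\Vert e \Vert_{H^m(T)} \le C h_T^{2+\min\{p-1,s\}-m} \vert u \vert_{H^{2+\min\{p-1,s\}}(T)}$ for $m=0,1,2$. This is valid since $H^2 \hookrightarrow C^0$ in $d\le 3$. For $s=0$ the right-hand side reduces to boundedness, i.e.\ an $\mathcal{O}(1)$ estimate. The $H^1$ term is therefore immediate.

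For the stabilization term we note that, because $\hdgjump{\dn u}=0$, the HDG-jump of the error equals $\dn e - \delta$ on each $\partial T$. We handle $\delta$ by inserting $\Pi_F^{p-1}$. Since $\Pi_F^{p-1}\dn(\Pi_h^p u)$ is not needed, we instead write $\dn e - \delta = \dn e|_T - (\dn u - \Pi_F^{p-1}\dn u)$ and bound both pieces separately in $L^2(\partial T)$. The continuous trace inequality $\Vert w\Vert_{\partial T}^2 \le C(h_T^{-1}\Vert w\Vert_T^2 + h_T \Vert \nabla w\Vert_T^2)$ applied to $w=\nabla e$ yields $h^{-1/2}\Vert \dn e\Vert_{\partial T} \le C h^{\min\{p-1,s\}}\vert u\vert_{H^{2+s}(T)}$. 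For $\delta$, the $L^2(F)$-optimality of $\Pi_F^{p-1}$ gives $\Vert \delta\Vert_F \le \Vert \dn u - \dn(\Pi_h^p u)|_T\Vert_F$, since the latter trace is in $\mathbb{P}^{p-1}(F)$. This reduces $\delta$ to the same trace bound.

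The discrete Laplacian term is the main point. We write $\Delta u - \Delta_h\Pih u = \Delta e - \lift(\Pih u)$ and note that $\lift(\Pih u) = \lift(\Pih u - \u)$ formally, since $\u$ has zero HDG-jump. Using \eqref{eq:def:lifting} directly, for $\psi_h\in\mathbb{P}^{p-2}(\Th)$ we have $(\psi_h,\lift\Pih u)_{\Th} = -(\psi_h, \dn \Pi_h^p u - \Pi_F^{p-1}\dn u)_{\partial\Th} = (\psi_h, \dn e - \delta)_{\partial \Th}$. A discrete trace inequality then gives $\Vert \lift \Pih u\Vert_{\Th} \le C\, h^{-1/2}\Vert \dn e - \delta\Vert_{\partial\Th}$, i.e.\ \eqref{eq:LiftBounded} extended to the error. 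This is controlled exactly as the stabilization term. Combined with $\Vert \Delta e\Vert_{\Th}\le C h^{\min\{p-1,s\}}\vert u\vert_{H^{2+s}}$, all three contributions are bounded by $C h^{\min\{p-1,s\}}\Vert u\Vert_{H^{2+s}(\Omega)}$.

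The only delicate step is making sure the trace estimate of $\nabla e$ on $\partial T$ is legitimate for $s$ small, where $\nabla e \in H^1(T)$ only. The continuous trace inequality above needs just $H^1(T)$, so this works. For noninteger $s$ we use the fractional-order Lagrange interpolation estimates, available since $2+s>d/2$.
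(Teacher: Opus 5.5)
Your proof is correct and follows essentially the same route as the paper. In both, the $V_h$-error reduces to the broken $H^2$ interpolation error plus the stabilization seminorm, which also controls the lifting via the discrete trace inequality. Both then use $\dn \Pi_h^p u|_F \in \mathbb{P}^{p-1}(F)$, the multiplicative trace inequality and Lagrange interpolation with $r=\min\{2+s,p+1\}$. The only cosmetic difference is how the jump is handled: the paper writes it directly as $\Pi_F^{p-1}\dn(\Pi_h^p u - u)$ and uses $L^2$-stability of the projection, whereas you split $\dn e - \delta$ and apply best approximation of $\Pi_F^{p-1}$ to $\delta$, which costs only a factor of two.
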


\begin{proof}
   Standard interpolation results \cite[Thm.~11.13]{EG_FE1} for the Lagrange interpolation operator yield that 
   \begin{align}\label{eq:LagrInterpolEst}
      \vert u - \Pi_h^p u \vert_{H^m(T)} \le C h^{r-m} \vert u \vert_{H^r(T)}, \qquad 2 \le r \le p+1, \ m \le r.
   \end{align}
   Since $\hdgjump{\dn \u} = 0$, we have that 
   \begin{align*}
      \Vert \u - \Pih u \Vert_{V_h} \le C \left( \Vert u - \Pi_h^p u \Vert_{H^2(\Th)} + \vert \Pih u \vert_{\sterm_h} \right).
   \end{align*}
   For the second term, observe that $\dn \Pi_h^p u \vert_F \in \mathbb{P}^{p-1}(F)$ for all $F \in \Fh$, so that  
   \begin{align*}
      \vert \Pih u \vert_{\sterm_h}  = \Vert h^{-1/2} \Pi_F^{p-1} \dn (u - \Pi_h^p u) \Vert_{\partial \Th} \le C \left( h^{-1} \vert u - \Pi_h^p u \vert_{H^1(\Th)} + \vert u - \Pi_h^p u \vert_{H^2(\Th)} \right),
   \end{align*}
   where we used the multiplicative trace inequality \cite[Lem.~12.15]{EG_FE1} in the last step.
   Applying \eqref{eq:LagrInterpolEst} with $r = \min\{2+s,\, p+1\}$ yields the claim.
\end{proof}

As a candidate for the projection operator, we define $\pih : V \to V_h$ by
\begin{align*}
   (\pih u,\vh)_{V_h} =  \gamma^{-1} (\Delta u, \Delta_h \vh)_{\Th} + (u,\vvol)_{H^1(\Omega)} \quad \forall \vh \in V_h
\end{align*}
and denote by $\pi_h$ the projection onto the volume component of $\pih$. Clearly, the operators $\pih$ are uniformly bounded. From the following result it directly follows that $\pih$ indeed satisfies \eqref{eq:pihCondition}.

\begin{lemma}\label[lemma]{lem:pihtoZero}
   For all $u \in V$, we have that $\lim_{h \to 0} \Vert \u - \pih u \Vert_{V_h} = 0$. 
\end{lemma}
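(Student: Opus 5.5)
The idea is to recognize $\pih u$ as a best approximation of $\u$ in the discrete norm, then combine Céa-type optimality with density of smooth functions. Extend the inner product $(\cdot,\cdot)_{V_h}$ to tuples $\u = (u,\operatorname{tr}_{\Fh}(\dn u))$, using $\Delta_h \u = \Delta u$ and $\sterm_h(\u,\vh)=0$. For every $\vh \in V_h$ this gives
\begin{align*}
   (\u,\vh)_{V_h} = \gamma^{-1}(\Delta u,\Delta_h \vh)_{\Th} + (u,\vvol)_{H^1(\Omega)} + \gamma^{-1}\sterm_h(\u,\vh) = (\pih u,\vh)_{V_h}.
\end{align*}
Hence $\pih u$ is the $(\cdot,\cdot)_{V_h}$-orthogonal projection of $\u$ onto $V_h$, taken inside the Hilbert space $V_h + \{\u : u\in V\}$. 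Galerkin orthogonality then gives the best-approximation property
\begin{align*}
   \Vert \u - \pih u \Vert_{V_h} = \min_{\wh \in V_h} \Vert \u - \wh \Vert_{V_h} \le \Vert \u - \Pih w \Vert_{V_h} + \Vert \u - \ul{w}\Vert_{V_h}
\end{align*}
for any auxiliary smooth $w \in V$, where the second term comes from the triangle inequality.

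Fix $\varepsilon>0$ and choose $w \in V \cap H^{3}(\Omega)$ with $\Vert u - w\Vert_{V} \le \varepsilon$. By linearity and the identity $\Vert \ul{v}\Vert_{V_h} = \Vert v\Vert_V$ we get $\Vert \u - \ul{w}\Vert_{V_h} = \Vert u-w\Vert_V \le \varepsilon$. \Cref{lem:PihApprox} with $s=1$ and $p\ge2$ gives $\Vert \ul{w} - \Pih w\Vert_{V_h} \le C h \Vert w\Vert_{H^3(\Omega)}$. Therefore $\limsup_{h\to0}\Vert \u-\pih u\Vert_{V_h} \le \varepsilon$, and the claim follows since $\varepsilon$ was arbitrary.

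The main obstacle is the density step: we need $V\cap H^3(\Omega)$ to be dense in $V$ with respect to the $H^2$-norm, while keeping the boundary condition $v=0$ on $\GammaD$ with mixed boundary parts. I would first try regularization via the elliptic problem. Approximate $\Delta u$ in $L^2$ by smooth $g_n$, solve the corresponding Poisson problem with homogeneous Dirichlet data on $\GammaD$ and the Neumann data of $u$ on $\GammaR$, and invoke the regularity assumptions on $\Omega$ (smooth or convex) together with \Cref{lem:ellipticReg} to control the $H^2$-error. If that route is delicate near the interface of $\GammaD$ and $\GammaR$, I would fall back on \Cref{lem:PihApprox} with $s=0$. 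It still yields uniform boundedness, $\Vert \ul{w}-\Pih w\Vert_{V_h}\le C\Vert w\Vert_{H^2(\Omega)}$, and by the Lagrange interpolation estimates the error tends to zero for each fixed $w \in V$ by a local density argument element-wise. The Lagrange interpolant is only defined pointwise, so I would also double-check that $\Pih$ acts continuously enough on $H^2$, which it does since $H^2\hookrightarrow C^0$.
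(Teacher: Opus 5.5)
Your argument is correct and is essentially the paper's proof. The paper also uses that $\u - \pih u$ is $(\cdot,\cdot)_{V_h}$-orthogonal to $V_h$ to bound $\Vert \u - \pih u \Vert_{V_h}$ by $\Vert \u - \Pih u \Vert_{V_h}$, applies \Cref{lem:PihApprox} for smooth $u$, and reaches general $u \in V$ by density. It does this via the triangle inequality and the stability of $\pih$, rather than your direct choice $\wh = \Pih w$, and it simply asserts that smooth functions are dense in $V$.

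Two small fixes to your write-up:
\begin{itemize}
\item In your display, the first term after the inequality should read $\Vert \ul{w} - \Pih w \Vert_{V_h}$, since that is the quantity you actually bound afterwards.
\item Of your two ways to close the density step, use the fallback. The estimates in \Cref{lem:PihApprox} are element-local and hold for every $w \in H^2(\Omega)$, so density of $C^\infty(\overline{\Omega})$ in $H^2(\Omega)$, with no boundary condition, already gives $\Vert \u - \Pih u \Vert_{V_h} \to 0$. Your regularization route through the mixed $\GammaD$--$\GammaR$ Poisson problem generally fails, because such problems have no $H^2$ shift theorem near the interface.
\end{itemize}
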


\begin{proof}
   By definition of $\pih$, we have that 
   \begin{align*}
      \Vert \u - \pih u \Vert_{V_h}^2 = (\u - \pih u,\u - \Pih u)_{V_h} \le \Vert \u - \pih u \Vert_{V_h} \Vert \u - \Pih u \Vert_{V_h}.
   \end{align*}   
   Hence, \Cref{lem:PihApprox} implies that 
   \begin{align}\label{eq:pihtoZeroWithSgezero}
      \lim_{h \to 0} \Vert \u - \pih u \Vert_{V_h} = 0 \quad \forall u \in V \cap H^{2+s}(\Omega), s > 0.
   \end{align}
   For any $u \in V$, we can find by density $\tilde{u} \in C^\infty(\Omega) \cap V$ such that $\Vert u - \tilde{u} \Vert_{V} < \epsilon$ for any $\epsilon > 0$. Then, we have that 
   \begin{align*}
      \Vert \u - \pih u \Vert_{V_h} \le \Vert u - \tilde{u} \Vert_{V} + \Vert \ul{\tilde{u}} - \pih \tilde{u} \Vert_{V_h} + \Vert \pih (\tilde{u} - u) \Vert_{V_h} \le 2 \epsilon +  \Vert \ul{\tilde{u}} - \pih \tilde{u} \Vert_{V_h}.
   \end{align*} 
   The last term vanishes as $h \to 0$ by \eqref{eq:pihtoZeroWithSgezero}, so that $\limsup_{h \to 0} \Vert \u - \pih u \Vert_{V_h} \le 2\epsilon$. Since $\epsilon > 0$ was arbitrary, the claim follows.
\end{proof}

\section{Analysis of the discrete problem}\label{sec:NumAnalysis}
The main goal of this section is to show that the discrete problem \eqref{eq:discrete_weak_form} is stable under the same restrictions on $\Omega$, $\alpha$, and $\beta$ as in \Cref{subsec:WP} and that the solution $\uh \in V_h$ converges to the continuous solution $u \in V$ of \eqref{eq:Cont:WF}. 

After some preparations in \Cref{subsec:NA:prelims}, we show in \Cref{subsec:NA:AsympCons} that the discrete sesquilinear form is asymptotically consistent in the sense of \Cref{def:asympConsistency}. Then, in \Cref{sec:DA:stability}, we establish stability and convergence by applying \Cref{thm:discreteGarding}, exploiting the fact that $V_{\Th} \subset H^1(\Omega)$. Finally, we derive convergence estimates in \Cref{subsec:NA:ConvEst} under the assumption that the continuous solution $u$ has additional regularity. 

\subsection{Preliminaries}\label{subsec:NA:prelims}
At the discrete level, we need to control the piecewise Hessian. Since $V_{\Th} \not \subset H^2(\Omega)$, \Cref{lem:ellipticReg} does not apply directly. Instead, we prove a discrete analogue by using an enrichment operator $E_h : V_{\Th} \to H^2(\Omega)$ that maps the $H^1$-conforming Lagrange finite element space $V_{\Th}$ into an $H^2$-conforming space. 

Classically, such operators map $V_{\Th}$ into a $C^1$-conforming counterpart, for instance, the Argyris space or the Hsieh--Clough--Tocher macro-element \cite{BS05,BGS10,GHV11}, but in the three-dimensional case these constructions are limited to polynomial degrees $p \leq 3$.
This drawback is overcome in \cite{BS19VEO} by mapping into $H^2$-conforming virtual elements, which are available for any polynomial degree $p \ge 2$. While the enrichment operator $E_h$ is also useful for designing fast solvers for fourth-order problems \cite{BW05}, we use it here only as a theoretical tool and refer to the literature for details on its construction \cite{BS05,BGS10,GHV11,BS19VEO}.

\begin{lemma}\label[lemma]{lem:EnrichmentOperator}
   Let $p \ge 2$. There exists an operator $E_h : V_{\Th} \to H^2(\Omega)$ such that 
   \begin{align*}
      \Vert \hessian \left( \vvol - E_h \vvol \right) \Vert_{\Th} \le C \Vert h^{-1/2} \jump{\dn \vvol} \Vert_{\Fh^{i}},
   \end{align*}
   where the constant $C > 0$ depends only on $p$ and the shape regularity of $\Th$. In particular, $E_h$ maps $V_{\Th}\cap H^1_0(\Omega)$ into $H^2(\Omega) \cap H^1_0(\Omega)$.
\end{lemma}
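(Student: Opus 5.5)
The plan is to take an existing $H^2$-conforming enrichment construction and verify the stated estimate through a local averaging argument. The paper treats $E_h$ as a theoretical tool built from the literature.

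\emph{Construction.} In two dimensions, or for $p\le 3$ in three dimensions, we let $E_h$ map into the Hsieh--Clough--Tocher (or Argyris) macro-element space, as in \cite{BS05,BGS10,GHV11}. For general $p\ge 2$ in $d=3$, we use the $H^2$-conforming virtual element space of \cite{BS19VEO}. In either case the target space $W_h\subset H^2(\Omega)$ is characterized by nodal degrees of freedom: point values, gradients (and higher derivatives at vertices), and normal-derivative moments on edges and faces.

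We define $E_h v_h$ by \emph{averaging}. Every degree of freedom of $W_h$ at a node $z$ is set to the arithmetic mean of the corresponding quantity of $v_h|_T$ over all elements $T$ containing $z$. Boundary nodes on $\GammaD$ are modified so that function values vanish there. Since $v_h$ is continuous, point values and tangential derivatives along facets are already single-valued and are copied exactly. This gives the final claim: if $v_h\in H^1_0(\Omega)$, we prescribe zero values and zero tangential derivatives on $\partial\Omega$, and $E_hv_h$ vanishes on $\partial\Omega$.

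\emph{Local estimate.} On each $T$, $v_h|_T - E_h v_h|_T$ lies in a finite-dimensional local space, since both restrict to polynomials or VEM functions of bounded degree. A scaling argument on the reference element gives
\begin{equation*}
\Vert \hessian(v_h-E_hv_h)\Vert_T^2\le C\sum_{z}\;h_T^{d-4+2|\alpha|}\,\bigl|\partial^\alpha(v_h|_T-E_hv_h)(z)\bigr|^2 ,
\end{equation*}
summed over the local degrees of freedom, with the corresponding face and edge moments weighted analogously. Each difference $\partial^\alpha v_h|_T(z)-\text{avg}$ is bounded by the jumps of $\partial^\alpha v_h$ between elements sharing $z$.

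Because $v_h$ is continuous, jumps of first derivatives reduce to jumps of normal derivatives $\jump{\dn v_h}$ across interior facets. Jumps of second derivatives at vertices reduce, via tangential differentiation along facets, to tangential derivatives of $\jump{\dn v_h}$ together with jumps of $\dn v_h$. Inverse inequalities on facets then convert these point and moment quantities into $h_F^{-1/2}\Vert\jump{\dn v_h}\Vert_F$ times suitable powers of $h$, and the powers cancel to exactly the claimed scaling. Shape regularity bounds the number of elements around each node, so summing over $T$ gives the estimate with $C$ depending only on $p$ and the mesh regularity.

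\emph{Main obstacle.} The delicate part is boundary facets, because the right-hand side only involves interior facets $\Fh^i$. At boundary nodes the averaged degrees of freedom must be fixed without introducing boundary normal-derivative terms. For this reason we only impose the zero conditions that $v_h$ already satisfies exactly, namely values and tangential derivatives, and keep normal derivatives as averages of interior values. A second difficulty is the VEM case in three dimensions, where the local norm equivalence for the virtual space must be taken from \cite{BS19VEO} rather than proven. I would cite that stability result directly.
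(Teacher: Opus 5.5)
Your strategy (average the degrees of freedom of an $H^2$-conforming target space, then use a local scaling argument) is the standard argument behind the references the paper cites. The paper's own proof is nothing more than those citations: \cite{BS05,BGS10,GHV11} for $p=2$ and the virtual enriching operators of \cite{BS19VEO} for general $p$. However, one step of your sketch is false, and it breaks the proof for part of the constructions you allow.

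\textbf{The false step.} You claim that jumps of second derivatives at vertices reduce to tangential derivatives of $\jump{\dn v_h}$ plus jumps of $\dn v_h$. Take a facet $F$ with normal $\nu$ and tangent $t$. Continuity of $v_h$ gives $\jump{\partial_{tt}v_h}=0$ and $\jump{\partial_{t\nu}v_h}=\partial_t\jump{\dn v_h}$. But $\jump{\partial_{\nu\nu}v_h}$ is not constrained by $\jump{\dn v_h}$ at all. For example, $\max\{x_1,0\}^2$ has $\jump{\dn v_h}=0$ across $\{x_1=0\}$ but a nonzero Hessian jump there.

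\textbf{Consequence.} For a target with Hessian degrees of freedom at vertices, such as the Argyris element you offer as an alternative, the estimate is actually false. Take $\GammaD=\emptyset$, a mesh aligned with $\{x_1=0\}$, and $v_h=\max\{x_1,0\}^2\in V_{\Th}$.
\begin{itemize}
\item The right-hand side vanishes, so $\hessian(v_h-E_hv_h)=0$ on every element.
\item Since $v_h$ is $C^1$, $v_h-E_hv_h\in H^2(\Omega)$ is piecewise affine, hence globally affine.
\item Then $v_h$ would inherit the $C^2$ vertex continuity of $E_hv_h$, which it does not have.
\end{itemize}
So Argyris-type targets must be excluded, together with the parenthetical ``higher derivatives at vertices''.

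\textbf{The fix.} The target must satisfy two conditions: (i) its vertex degrees of freedom are only values and gradients; (ii) its local spaces contain $\mathbb{P}^p(T)$, so that $E_hv_h=v_h$ whenever $\jump{\dn v_h}=0$. Suitable choices are:
\begin{itemize}
\item the generalized HCT family of degree $\max\{p,3\}$ in two dimensions;
\item a cubic $C^1$ split element (Worsey--Farin type, not ``HCT'') in three dimensions for $p\le 3$;
\item the $H^2$ virtual element spaces of \cite{BS19VEO}, which is what the paper relies on.
\end{itemize}
With this restriction, the rest of your sketch goes through as in the literature: averaging, the facet inverse inequality producing the weight $h^{-1/2}$, and the patch-counting via shape regularity.

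\textbf{A smaller boundary point.} It is not true that $v_h$ already satisfies the imposed zero tangential-derivative conditions exactly at boundary nodes. Elements touching $\GammaD$ only at a vertex $z$ do not satisfy them. Where two non-parallel $\GammaD$ facets meet, the whole gradient degree of freedom must be set to zero. You therefore need to bound $\nabla v_h|_T(z)$ itself by interior jumps. This works because the boundary tangents span the relevant directions and the elements around $z$ are connected through interior facets. It should be stated rather than assumed.
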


\begin{proof}
   For $p = 2$, we can use the standard constructions from \cite{BS05,BGS10,GHV11}. For general $p \ge 2$, we refer to \cite[Sec.~2.4]{BS19VEO} for $d = 2$ and to \cite[Sec.~3.4]{BS19VEO} for $d = 3$.
\end{proof}

The enrichment operator allows us to prove the following discrete analogue of \Cref{lem:ellipticReg}.

\begin{lemma}\label[lemma]{lem:discreteHessianEst}
   Let $h \le 1$. For any $\uh \in V_h$, we have that 
   \begin{align}\label{eq:discreteHessianEst}
      \Vert \hessian \uvol \Vert_{\Th} \le \CR \left( \Vert \Delta_h \uh \Vert_{\Th} + \Vert \uvol \Vert_{\Omega} \right) + \CE \vert \uh \vert_{\sterm_h},
   \end{align}
   where $\CR > 0$ is the elliptic regularity constant from \eqref{eq:ellipticReg} and $\CE > 0$ is independent of $h$.
\end{lemma}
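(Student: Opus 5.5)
Split $\uvol = E_h \uvol + (\uvol - E_h \uvol)$ and bound both parts. For the nonconforming remainder, \Cref{lem:EnrichmentOperator} gives
\begin{align*}
\Vert \hessian(\uvol - E_h\uvol)\Vert_{\Th} \le C \Vert h^{-1/2}\jump{\dn \uvol}\Vert_{\Fh^i}.
\end{align*}
On an interior facet $F = T_1\cap T_2$ the facet unknown $\ufac$ is single-valued. Hence $\jump{\dn\uvol} = \hdgjump{\dn u_h}\vert_{T_1} \pm \hdgjump{\dn u_h}\vert_{T_2}$, with the sign fixed by the orientation convention for $\nv$. This gives $\Vert h^{-1/2}\jump{\dn\uvol}\Vert_{\Fh^i} \le C\vert\uh\vert_{\sterm_h}$, so this part contributes to $\CE\vert\uh\vert_{\sterm_h}$.

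For the conforming part we need $E_h\uvol \in V$ in order to apply \eqref{eq:ellipticReg}. For the Dirichlet part this follows because $E_h$ preserves vanishing traces on $\GammaD$, which the constructions in \cite{BS19VEO} do facet-wise, as stated for $H^1_0$ in \Cref{lem:EnrichmentOperator}. \Cref{lem:ellipticReg} then gives
\begin{align*}
\Vert\hessian E_h\uvol\Vert_\Omega \le \CR\bigl(\Vert\Delta E_h\uvol\Vert_\Omega + \Vert E_h\uvol\Vert_\Omega\bigr).
\end{align*}
Next we return to discrete quantities. First, $\Vert E_h\uvol - \uvol\Vert_\Omega \le C h^2 \Vert h^{-1/2}\jump{\dn\uvol}\Vert_{\Fh^i}$, using the standard $L^2$ companion estimate of the enrichment operator or a Poincaré/Friedrichs argument elementwise. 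Second, write
\begin{align*}
\Vert\Delta E_h\uvol\Vert_\Omega \le \Vert\Delta_h\uh\Vert_{\Th} + \Vert\Delta(E_h\uvol-\uvol)\Vert_{\Th} + \Vert\lift\uh\Vert_{\Th}.
\end{align*}
The middle term is at most $\sqrt d\,\Vert\hessian(E_h\uvol-\uvol)\Vert_{\Th}$, and the last is at most $\Clift\vert\uh\vert_{\sterm_h}$ by \eqref{eq:LiftBounded}. Every extra term is therefore a multiple of $\vert\uh\vert_{\sterm_h}$, with $h\le1$ absorbing the powers of $h$. The leading terms keep exactly the coefficient $\CR$, which matters for later use with the Cordes condition.

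Collecting everything,
\begin{align*}
\Vert\hessian\uvol\Vert_{\Th} \le \CR\bigl(\Vert\Delta_h\uh\Vert_{\Th}+\Vert\uvol\Vert_\Omega\bigr) + \bigl(C + \CR(\sqrt d\, C + \Clift + C)\bigr)\vert\uh\vert_{\sterm_h},
\end{align*}
which defines $\CE$.

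The main obstacle is the boundary facets. On Robin facets $\dn E_h\uvol$ is unconstrained, and $V$ imposes only $u=0$ on $\GammaD$, so \Cref{lem:EnrichmentOperator} needs only interior jumps. One must still check that $E_h\uvol$ vanishes on $\GammaD$ when only part of the boundary is Dirichlet. If the cited construction guarantees this only for full $H^1_0$, I would modify the boundary degrees of freedom on $\Fh^D$ locally, setting the boundary nodal values to zero. The $L^2$ estimate $\Vert E_h\uvol-\uvol\Vert_\Omega\lesssim h^{3/2}\Vert\jump{\dn\uvol}\Vert$ is the other ingredient to confirm from \cite{BS19VEO}. If it is unavailable, one can instead apply Ehrling-type reasoning with $\Vert E_h \uvol\Vert_{H^1}$.
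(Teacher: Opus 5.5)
Your proposal is correct and matches the paper's proof essentially step by step. Both apply \eqref{eq:ellipticReg} to $E_h \uvol$, bound $\hessian(\uvol - E_h\uvol)$ and $\uvol - E_h \uvol$ by the interior normal-derivative jumps (and hence by $\vert \uh \vert_{\sterm_h}$ using $h \le 1$), and pass from $\Delta \uvol$ to $\Delta_h \uh$ via \eqref{eq:LiftBounded}, so the leading terms keep exactly the coefficient $\CR$. Two of your extra checks are points the paper passes over silently: bounding $\jump{\dn \uvol}$ by the HDG jumps through the single-valuedness of $\ufac$, and verifying that $E_h \uvol$ vanishes on $\GammaD$ so that \eqref{eq:ellipticReg}, which is stated only on $V$, really applies under mixed boundary conditions; your $h^2$ scaling of the $L^2$ enrichment estimate (instead of $h^{3/2}$) is the standard one and works equally well.
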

 
\begin{proof}
   Since $E_h \uvol \in H^2(\Omega)$, we can apply \Cref{lem:ellipticReg} to estimate 
   \begin{align*}
      \Vert \hessian E_h \uvol \Vert_{\Omega} \le \CR \left( \Vert \Delta E_h \uvol \Vert_{\Omega} + \Vert E_h \uvol \Vert_{\Omega} \right).
   \end{align*}
   Combining this with the triangle inequality and the elementwise estimate $\Vert \Delta v \Vert_{\Th} \le \sqrt{d} \Vert \hessian v \Vert_{\Th}$, we obtain
   \begin{align*}
      \Vert \hessian \uvol \Vert_{\Th} &\le \CR \left( \Vert \Delta E_h \uvol \Vert_{\Omega} + \Vert E_h \uvol \Vert_{\Omega} \right) + \Vert \hessian (\uvol - E_h \uvol) \Vert_{\Th} \\
      &\le \CR \left( \Vert \Delta \uvol \Vert_{\Th} + \Vert \uvol \Vert_{\Omega} \right) + (1 + \sqrt{d} \CR) \Vert \hessian (\uvol - E_h \uvol) \Vert_{\Th} \\
      &\qquad+ \CR \Vert \uvol - E_h \uvol \Vert_{\Omega}.
   \end{align*}
   It remains to bound the two terms involving $\uvol - E_h \uvol$ by the stabilization seminorm. \Cref{lem:EnrichmentOperator} yields that  
   \begin{align*}
      \Vert \hessian (\uvol - E_h \uvol) \Vert_{\Th} \le C \Vert h^{-1/2} \jump{\dn \uvol} \Vert_{\Fh^{i}} \le C \vert \uh \vert_{\sterm_h},
   \end{align*} 
   and furthermore, the construction from \cite[Eq.~(2.30)]{BS19VEO} yields that 
   \begin{align}
      \Vert \uvol - E_h \uvol \Vert_{\Omega} \le C h^{3/2} \Vert h^{-1/2} \jump{\dn \uvol} \Vert_{\Fh^{i}} \stackrel{h \le 1}{\le} C \vert \uh \vert_{\sterm_h}.
   \end{align}
   Since $\Vert \Delta \uvol \Vert_{\Th} \le \Vert \Delta_h \uh \Vert_{\Th} + \Clift \vert \uh \vert_{\sterm_h}$, combining all three estimates yields the claim.
\end{proof}

\begin{remark}[Lehrenfeld--Sch\"{o}berl stabilization] 
   In the context of HDG methods, a natural idea would be to replace the stabilization term $\sterm_h(\cdot,\cdot)$ in the sesquilinear form and the norm by the Lehrenfeld--Sch\"{o}berl stabilization term $\sterm_h(\lsp \cdot, \lsp \cdot)$, allowing us to reduce the polynomial degree of the facet space $V_{\Fh}$ to $p-2$ \cite{L10,LS16}. Unfortunately, we cannot\footnotemark \ weaken the stabilization term in \eqref{eq:discreteHessianEst}. Following \cite[Lem.~4.1]{DE22}, we can estimate 
   \begin{align*}
      \vert \vh \vert_{\sterm_h} \le \vert \lsp \vh \vert_{\sterm_h} + \vert \lspPerp \hdgjump{\dn v_h}\vert_{\sterm_h} \le \vert \lsp \vh\vert_{\sterm_h} + C \Vert \hessian \vvol \Vert_{\Th},
   \end{align*}
   but the Hessian term on the right-hand side cannot be controlled with the sesquilinear form $a_h(\cdot,\cdot)$. Hence, at least with the current approach, we cannot show stability of the proposed method with the Lehrenfeld--Sch\"{o}berl stabilization term. The only exception is the case $\beta = 0$, where the estimate \eqref{eq:discreteHessianEst} can be avoided.
   \footnotetext{On mesh consisting of two elements, choose a harmonic function $\uvol \in \mathbb{P}^2(\Th)$ whose normal-derivative jump across the interface has zero mean value. With the weaker stabilization $\sterm_h(\lsp \cdot, \lsp \cdot)$, the right-hand side of \eqref{eq:discreteHessianEst} contains only the $H^1$-term, which does not control the full Hessian of $u_h$ with a constant independent of $h$.}
\end{remark}

A direct consequence of \Cref{lem:discreteHessianEst} and the Cauchy--Schwarz inequality is the continuity of $a_h(\cdot,\cdot)$ with respect to the $\Vert \cdot \Vert_{V_h}$-norm. 

\begin{lemma}[Continuity]
   There exists a constant $C > 0$ such that
   \begin{align*}
      \left\vert a_h(\uh,\vh) \right\vert \le C \Vert \uh \Vert_{V_h} \Vert \vh \Vert_{V_h} \qquad \forall \uh,\vh \in V_h.
   \end{align*}
\end{lemma}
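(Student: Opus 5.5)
We bound each term of $a_h(\cdot,\cdot)$ in the rewritten form \eqref{eq:ahLift} by Cauchy--Schwarz and then control every factor by $\Vert \cdot \Vert_{V_h}$. This form is the natural starting point because it expresses the bilinear consistency term through the discrete Laplacian $\Delta_h \vh$, which appears directly in the norm. The only factor not directly present in $\Vert\cdot\Vert_{V_h}$ is the full broken Hessian of $\uvol$, which enters through $\FOtermuh$; handling it is the main step, and \Cref{lem:discreteHessianEst} is exactly the tool for it.

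For the principal term, the pointwise bound $\vert \frobip{\Am}{\hessian \uvol} \vert \le \vert \Am \vert_F \vert \hessian \uvol \vert_F$ gives
\begin{align*}
   \vert (\FOtermuh,\Delta_h \vh)_{\Th} \vert \le \vert \Am \vert_F \Vert \hessian \uvol \Vert_{\Th} \Vert \Delta_h \vh \Vert_{\Th}.
\end{align*}
We then apply \eqref{eq:discreteHessianEst}:
\begin{align*}
   \Vert \hessian \uvol \Vert_{\Th} \le \CR \bigl( \Vert \Delta_h \uh \Vert_{\Th} + \Vert \uvol \Vert_{\Omega} \bigr) + \CE \vert \uh \vert_{\sterm_h} \le C \Vert \uh \Vert_{V_h}.
\end{align*}
This estimate holds for $h \le 1$. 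For $h>1$ only finitely many meshes are relevant, or one accepts an $h$-dependent factor there; alternatively, one can note that the $h\le 1$ restriction only enters through the $L^2$ enrichment bound. The constant may depend on $\gamma$, $\alpha$ and $\beta$, since the norm carries the weight $\gamma^{-1}$. Together with $\Vert \Delta_h \vh \Vert_{\Th} \le \gamma^{1/2}\Vert \vh \Vert_{V_h}$, this bounds the principal term.

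The remaining terms are routine. The stabilization satisfies $\stab \vert \sterm_h(\uh,\vh)\vert \le \stab \vert\uh\vert_{\sterm_h}\vert\vh\vert_{\sterm_h}$, and both seminorms are bounded by $\gamma^{1/2}$ times the $V_h$-norm. The gradient and $k^2$ mass terms are bounded by the $H^1$ part of the norm.

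For the Robin term we use the continuous trace inequality $\Vert \uvol \Vert_{\GammaR} \le C \Vert \uvol \Vert_{H^1(\Omega)}$. This is valid because $V_{\Th} \subset H^1(\Omega)$ and $\Omega$ is Lipschitz, so no discrete trace argument is needed. It gives $\theta \vert (\uvol,\vvol)_{\Fh^R}\vert \le C\theta \Vert \uh\Vert_{V_h}\Vert\vh\Vert_{V_h}$.

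Summing the bounds yields the claim, with $C$ depending on $\alpha$, $\beta$, $k$, $\theta$, $\stab$, $\CR$, $\CE$, and the shape regularity, but not on $h$.
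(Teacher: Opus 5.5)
Your proposal is correct and follows the paper's own argument, which is just \Cref{lem:discreteHessianEst} combined with Cauchy--Schwarz on each term. On your $h\le 1$ caveat, the cleanest fix is to note that $h\le \operatorname{diam}(\Omega)$. The factor $h^{3/2}$ in the $L^2$ enrichment bound is then uniformly bounded, so no meshes need to be excluded and no $h$-dependent factor arises.
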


\subsection{Asymptotic consistency}\label{subsec:NA:AsympCons}
The main goal of this section is to show that the sesquilinear form $a_h(\cdot,\cdot)$ is asymptotically consistent with $a(\cdot,\cdot)$ in the sense of \Cref{def:asympConsistency}. In preparation, we prove a discrete compactness result, which we use both in the proof of asymptotic consistency and in the verification of \Cref{ass:InclusionConsistentlyCompact}.  

\begin{theorem}\label{thm:discreteCompactnessResult}
   Let $\seqh{\ul{v}}$, $\vh \in V_h$, be uniformly bounded. Then, there exists $v \in V$ such that, up to subsequences, $\vvol \overset{L^2}{\rightharpoonup} v$, $\nabla \vvol \overset{[L^2]^d}{\rightharpoonup} \nabla v$, and $\Delta_h \vh \overset{L^2}{\rightharpoonup} \Delta v$. In particular, $\vh \overset{\pi_h}{\rightharpoonup} v$.
\end{theorem}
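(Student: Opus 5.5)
Uniform boundedness of $\Vert \vh \Vert_{V_h}$ gives uniform bounds on $\Vert \vvol \Vert_{H^1(\Omega)}$, $\Vert \Delta_h \vh \Vert_{\Th}$ and $\vert \vh \vert_{\sterm_h}$. Hence $\Vert \Delta \vvol \Vert_{\Th} \le \Vert \Delta_h \vh \Vert_{\Th} + \Clift \vert \vh\vert_{\sterm_h}$ is bounded. By \Cref{lem:discreteHessianEst}, $\Vert \hessian \vvol \Vert_{\Th}$ is bounded as well. By weak compactness in $L^2$, after passing to a subsequence there exist $v \in H^1(\Omega)$ with $v = 0$ on $\GammaD$ and $g \in L^2(\Omega)$ such that $\vvol \rightharpoonup v$ in $H^1(\Omega)$ (so $\vvol \to v$ strongly in $L^2$ by Rellich), $\nabla \vvol \rightharpoonup \nabla v$, and $\Delta_h \vh \rightharpoonup g$. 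The boundary condition survives because the trace is weakly continuous and $V_{\Th}$ carries it.

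To show $v \in H^2(\Omega)$, I would use the enrichment operator from \Cref{lem:EnrichmentOperator}. The sequence $E_h \vvol$ is bounded in $H^2(\Omega)$, since $\Vert \hessian E_h \vvol \Vert_\Omega \le \Vert \hessian \vvol \Vert_{\Th} + C\vert \vh \vert_{\sterm_h}$ and $\Vert \vvol - E_h \vvol\Vert_\Omega \le C h \vert\vh\vert_{\sterm_h}$. The latter estimate, together with the analogous $H^1$ estimate $\Vert \nabla(\vvol - E_h\vvol)\Vert_\Omega \le C h^{1/2}\cdots$, also shows that $E_h \vvol - \vvol \to 0$ in $L^2$. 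Hence a further subsequence of $E_h \vvol$ converges weakly in $H^2$, and its limit must be $v$. So $v \in H^2(\Omega)$, and with the trace condition $v \in V$. On Dirichlet parts I would check that $E_h$ preserves the vanishing trace; this holds for $H^1_0$ by \Cref{lem:EnrichmentOperator}, and for mixed boundary parts I would rely on the local construction.

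The key step, and the main obstacle, is identifying $g = \Delta v$. I would test against $\phi \in C_c^\infty(\Omega)$ and split $\phi = \Pi^{p-2}_h\phi + (\phi - \Pi^{p-2}_h\phi)$, where $\Pi^{p-2}_h$ is the elementwise $L^2$ projection. Using the definition of the lifting \eqref{eq:def:lifting} with $\psi_h = \Pi^{p-2}_h\phi$, then elementwise integration by parts, the identities
\begin{align*}
(\Delta_h \vh, \phi)_{\Th} &= (\Delta \vvol + \lift\vh, \phi)_{\Th} + (\lift\vh, \Pi^{p-2}_h\phi - \phi)_{\Th} \\
&= -(\nabla \vvol,\nabla\phi)_\Omega + (\dn \vvol, \phi)_{\partial\Th} - (\hdgjump{\dn v_h}, \Pi^{p-2}_h\phi)_{\partial\Th} + (\lift\vh,\Pi^{p-2}_h\phi-\phi)_{\Th}
\end{align*}
hold. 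Since $\phi$ is single-valued on facets, the facet unknown $\vfac$ contributes $\sum_T(\vfac,\phi)_{\partial T} = 0$ after summing over elements (the normals cancel). Therefore $(\dn\vvol,\phi)_{\partial\Th} = (\hdgjump{\dn v_h},\phi)_{\partial \Th}$, and the remainder is $(\hdgjump{\dn v_h}, \phi - \Pi^{p-2}_h\phi)_{\partial\Th}$. By Cauchy--Schwarz and trace inequalities this is bounded by $h^{1/2}\vert\vh\vert_{\sterm_h}\, \Vert \phi - \Pi^{p-2}_h\phi\Vert_{\partial\Th} \le C h^{1/2}\cdot h^{1/2}\Vert\phi\Vert_{H^1}\vert\vh\vert_{\sterm_h} \to 0$. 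The lifting term tends to zero by \eqref{eq:LiftBounded} and $\Vert \phi - \Pi^{p-2}_h\phi\Vert_\Omega \le Ch\Vert\phi\Vert_{H^1}$. Passing to the limit gives $(g,\phi)_\Omega = -(\nabla v,\nabla\phi)_\Omega = (\Delta v,\phi)_\Omega$, and density yields $g = \Delta v$.

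For the final claim I would use the definition of $\pih$. For $w\in V$,
\[
(\vh, \pih w)_{V_h} = \gamma^{-1}(\Delta_h\vh,\Delta w)_{\Th} + (\vvol, w)_{H^1(\Omega)},
\]
which follows from the defining relation of $\pih$ by conjugate symmetry. By the weak convergences just established, this converges to $(v,w)_V$, so $\vh \overset{\pi_h}{\rightharpoonup} v$.
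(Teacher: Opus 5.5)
Your proof is correct. The identification of the weak limit of $\Delta_h \vh$ follows the paper's argument: test with a compactly supported smooth function, split it with the elementwise $L^2$-projection onto $\mathbb{P}^{p-2}(\Th)$, use \eqref{eq:def:lifting}, and let the single-valued facet unknown cancel. Where you differ is in how you obtain $v \in V$.

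The paper only shows $\Delta v = q \in L^2(\Omega)$ in the distributional sense for $v \in H^1(\Omega)$, and then invokes \eqref{eq:ellipticReg}. You instead combine \Cref{lem:discreteHessianEst} with the enrichment operator of \Cref{lem:EnrichmentOperator}. This gives a uniform $H^2(\Omega)$-bound on $E_h \vvol$ and $E_h \vvol - \vvol \to 0$ in $L^2(\Omega)$, so $v$ is the weak $H^2$-limit of a subsequence of $E_h \vvol$. Only afterwards do you identify $g = \Delta v$.

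The paper's route is shorter and avoids the enrichment operator in this proof. However, its final step uses \eqref{eq:ellipticReg} as a regularity shift from $H^1$ to $H^2$. The estimate as stated does not provide this: it is an a priori bound for functions already in $V$. For $\GammaR \neq \emptyset$, such a shift would also require boundary information that the limit $v$ does not carry. Your route uses \eqref{eq:ellipticReg} only as an a priori bound, inside \Cref{lem:discreteHessianEst}, applied to $E_h \vvol \in H^2(\Omega)$. In that sense it is the more robust of the two. It also makes the final claim $\vh \overset{\pi_h}{\rightharpoonup} v$ explicit via the defining relation of $\pih$, which the paper leaves implicit.

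Two minor simplifications. The term $(\lift\vh, \Pi^{p-2}_h\phi - \phi)_{\Th}$ that you carry along and estimate is identically zero, since $\lift\vh \in \mathbb{P}^{p-2}(\Th)$ and $\Pi^{p-2}_h$ is the $L^2$-projection onto this space. Also, you do not need $E_h$ to preserve the Dirichlet trace, because $v = 0$ on $\GammaD$ already follows from the weak $H^1$-convergence of $\vvol$.
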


\begin{proof}
   The proof follows standard arguments \cite{BE08,BO09,DPE10,HLvB25}. Since the volume unknowns $\vvol \in V_{\Th} \subset H^1(\Omega)$ form a bounded sequence in $H^1(\Omega)$, there exists $v \in H^1(\Omega)$ such that $\vvol \overset{L^2}{\rightharpoonup} v$ and $\nabla \vvol \overset{[L^2]^d}{\rightharpoonup} \nabla v$. Further, $ \Vert \Delta_h \vh\Vert_{\Th} \le \gamma^{1/2} \Vert \vh \Vert_{V_h}$, so that the sequence $\seqh{\Delta_h \ul{v}}$ is uniformly bounded in $L^2$ and there exist $q \in L^2(\Omega)$ such that $\Delta_h \vh\overset{L^2}{\rightharpoonup} q$, up to subsequences. It remains to show that $q = \Delta v$. Let $\psi \in C_0^\infty(\Omega)$ and let $\psi_h$ be the element-wise $L^2$-projection of $\psi$ onto $\mathbb{P}^{p-2}(\Th)$. Since $\vvol \in H^1(\Omega)$, integration by parts yields that
   \begin{subequations}\label{eq:dCP:partialInt}
      \begin{align}
         (\vvol, \Delta \psi)_{\Th} &= -(\nabla \vvol, \nabla \psi)_{\Th} = (\Delta \vvol, \psi)_{\Th} - (\hdgjump{\dn v_h},\psi)_{\partial \Th} \\ 
          &= (\Delta \vvol, \psi - \psi_h)_{\Th} - (\hdgjump{\dn v_h},\psi - \psi_h)_{\partial \Th} + (\Delta_h \vh,\psi_h)_{\Th} \label{eq:dCP:partialInt:b}, 
       \end{align}
   \end{subequations}   
   where, in the second step, we used that $\psi \in H^1(\Omega)$ to shift in the facet component in the second step. Since $\lim_{h \to 0} \Vert \psi - \psi_h \Vert_{\Th} = 0$, the first two terms in \eqref{eq:dCP:partialInt:b} go to zero as $h \to 0$, and it follows that 
   \begin{align*}
      (q,\psi)_{\Th} = \lim_{h \to 0} (\Delta_h \vh, \psi)_{\Th} = \lim_{h \to 0} ((\Delta_h \vh, \psi - \psi_h)_{\Th} + (\Delta_h \vh, \psi_h)_{\Th}) \stackrel{\eqref{eq:dCP:partialInt}}{=} \lim_{h \to 0} (\vvol, \Delta \psi)_{\Th} = (v, \Delta \psi)_{\Omega}.
   \end{align*}
   Hence, we have that $q = \Delta v \in L^2(\Omega)$ in the sense of distributions, and \eqref{eq:ellipticReg} yields that $v \in V$.
\end{proof}

\begin{lemma}\label[lemma]{lem:asymptoticConsistencyah}
   The sesquilinear form $a_h(\cdot,\cdot)$ is asymptotically consistent with $a(\cdot,\cdot)$.
\end{lemma}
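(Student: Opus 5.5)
We have to show: for $u \in V$ and a uniformly bounded sequence $\vh \in V_h$ with $\vh \overset{\pi_h}{\rightharpoonup} v$, the limit $\lim_{h\to0} a_h(\pih u,\vh) = a(u,v)$ holds. I would work with the lifted representation \eqref{eq:ahLift} and split $a_h(\pih u,\vh) = a_h(\u,\vh) + a_h(\pih u - \u,\vh)$. Here $a_h(\u,\vh)$ is the form \eqref{eq:ahLift} evaluated on the tuple $\u = (u,\operatorname{tr}_{\Fh}\dn u)$, with $\Delta_h$ replaced by the exact Hessian and Laplacian of $u$, and with $\sterm_h(\u,\vh)=0$. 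The point of the splitting is that the consistency error becomes a difference term that \Cref{lem:pihtoZero} makes small. The limit term is then handled by the weak convergences of \Cref{thm:discreteCompactnessResult}.

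\textbf{Step 1 (difference term).} Set $\ul{e}_h \coloneqq \pih u - \u$. By \Cref{lem:pihtoZero}, $\Vert \ul{e}_h \Vert_{V_h} \to 0$. I want $|a_h(\ul{e}_h,\vh)| \le C \Vert \ul{e}_h\Vert_{V_h}\Vert \vh\Vert_{V_h}$, i.e.\ the continuity lemma extended to tuples $\ul e_h$ whose volume part lies in $V_{\Th}+V$.

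Every term is fine by Cauchy--Schwarz except the principal one, $(\Am:\hessian e_h,\Delta_h\vh)_{\Th}$, which needs a bound on $\Vert\hessian e_h\Vert_{\Th}$. The difficulty is that \eqref{eq:ahLift} was derived for discrete arguments, where $\Am:\hessian\uvol \in \mathbb{P}^{p-2}$ is an admissible test function in \eqref{eq:def:lifting}. So the rewriting step should simply be taken as the definition of the form on the extended space.

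To bound the Hessian, I would apply \Cref{lem:discreteHessianEst} to $\pih u - \Pih u$, which is discrete. I would then use \Cref{lem:PihApprox} with a density argument, exactly as in \Cref{lem:pihtoZero}, for $\u - \Pih u$: on the conforming part $\hessian u$ is controlled by \eqref{eq:ellipticReg}. Altogether this gives $\Vert\hessian e_h\Vert_{\Th}\to0$. Since $\Vert \vh\Vert_{V_h}$ is uniformly bounded, the difference term vanishes.

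\textbf{Step 2 (limit term).} Now
\begin{align*}
a_h(\u,\vh) = (\Am:\hessian u,\Delta_h\vh)_{\Omega} + (\nabla u,\nabla \vvol)_\Omega - k^2(u,\vvol)_\Omega - i\theta(u,\vvol)_{\GammaR}.
\end{align*}
This is a sum of fixed $L^2$ functions tested against sequences that converge weakly by \Cref{thm:discreteCompactnessResult}. Weak limits are unique, so the limit $v$ there coincides with the given $\pi_h$-weak limit, and every subsequence argument yields the same limit. This gives convergence of the first three terms to $(\Am:\hessian u,\Delta v)_\Omega + (\nabla u,\nabla v)_\Omega - k^2(u,v)_\Omega$.

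For the boundary term I would use that $\vvol$ is bounded in $H^1(\Omega)$ and the trace map $H^1(\Omega)\to L^2(\partial\Omega)$ is compact. Hence $\vvol|_{\GammaR}\to v|_{\GammaR}$ strongly, possibly along subsequences, with the same uniqueness argument. Summing the terms gives $a(u,v)$.

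\textbf{Main obstacle.} I expect the hardest step to be the careful treatment of the principal term in Step 1. One must make sense of the form on the non-discrete tuple $\u$ and bound $\hessian(\pi_h u - u)$ using only the $V_h$-norm convergence. My first attempt would be the splitting through $\Pih u$ combined with a smooth approximation $\tilde u$ of $u$, which uses only the uniform boundedness of $\pih$ and \Cref{lem:discreteHessianEst}.
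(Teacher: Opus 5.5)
Your proposal is correct and follows essentially the same route as the paper: you split off the difference $\pih u - \u$, which vanishes in $\Vert \cdot \Vert_{V_h}$ by \Cref{lem:pihtoZero}, and you pass to the limit in the remaining terms via the weak convergences of \Cref{thm:discreteCompactnessResult} together with the compactness of the trace. You are also more careful than the paper at two points that it only asserts: you control the broken Hessian of $\pi_h u - u$ through $\Pih u$, \Cref{lem:discreteHessianEst} and density, and you identify the subsequential limit with the given $\pi_h$-weak limit.
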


\begin{proof}
   Let $u \in V$ and let $\seqh{\ul{v}}$, $\vh \in V_h$, be a uniformly bounded sequence. By \Cref{thm:discreteCompactnessResult}, there exists $v \in V$ such that $\vvol \overset{L^2}{\rightharpoonup} v$, $\nabla \vvol \overset{L^2}{\rightharpoonup} \nabla v$, and $\Delta_h \vh\overset{L^2}{\rightharpoonup} \Delta v$ up to subsequences. We treat the terms of $a_h(\pih u, \vh)$ separately. For the boundary term, we estimate estimate that 
   \begin{align*}
      \vert (\pi_h u, \vvol)_{\Fh^R} \vert\le \vert  (\pi_h u - u, \vvol)_{\Fh^R} \vert  + (u, \vvol)_{\Fh^R}  \le C \Vert \pi_h u - u \Vert_{H^1(\Omega)} \Vert v_h \Vert_{V_h} + (u, \vvol)_{\Fh^R}, 
   \end{align*}
   where we used that $\pi_h u- u \in H^1(\Omega)$. Since $\vh$ is a bounded sequence, the first term converges to zero due to \Cref{lem:pihtoZero} and the second term converges to $(u,v)_{\GammaR}$ due to the compactness of the trace operator.
   For the volume terms, we use the same technique and estimate 
   \begin{align*}
      \vert (\FOterm{\pi_h u},\Delta_h \vh)_{\Th} + \stab \sterm_h(\pih u, \vh) \vert  \le C \Vert \pih u - \u \Vert_{V_h} \Vert \vh \Vert_{V_h} + (\FOtermu, \Delta_h \vh)_{\Th}.
   \end{align*}
   The first term on the right-hand side converges to zero by \Cref{lem:pihtoZero} and the second term converges to $(\FOtermu,\Delta v)_{\Omega}$ since $\Delta_h \vh \overset{L^2}{\rightharpoonup} \Delta v$. The lower order volume terms $(\nabla \pi_h u, \nabla \vvol)_{\Omega} - k^2(\pi_h u,\vvol)_{\Omega}$ can be treated analogously. Altogether, we obtain that 
   \begin{align*}
      \lim_{h \to 0} a_h(\pih u,\vh) = a(u,v),
   \end{align*}
   which shows that $a_h(\cdot,\cdot)$ is asymptotically consistent with $a(\cdot,\cdot)$.
\end{proof}

Finally, we show that the space $V_h = V_{\Th} \times V_{\Fh}$ defined by \eqref{eq:def:VhSpaces} satisfies \Cref{ass:InclusionConsistentlyCompact}.

\begin{lemma}\label[lemma]{lem:discreteIota}
   The sequence of discrete spaces $\seqh{V}$ satisfies \Cref{ass:InclusionConsistentlyCompact} with $W = L^2(\Omega)$. 
\end{lemma}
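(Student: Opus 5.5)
I will take $\iota_h : V_h \to L^2(\Omega)$ to be the projection onto the volume component, $\iota_h \vh \coloneqq \vvol$, and $J \coloneqq \iota : V \hookrightarrow L^2(\Omega)$ to be the compact Sobolev embedding. This is exactly the structure needed in the proof of \Cref{thm:discreteGarding}: the perturbation in \eqref{eq:Abstract:DiscrGarding} is $\Vert \vvol \Vert_\Omega^2$. Uniform boundedness of $\iota_h$ is immediate, since $\Vert \iota_h \vh \Vert_\Omega = \Vert \vvol \Vert_\Omega \le \Vert \vvol \Vert_{H^1(\Omega)} \le \Vert \vh \Vert_{V_h}$ by the definition of the discrete norm. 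It therefore remains to show that for every uniformly bounded sequence with $\vh \overset{\pi_h}{\rightharpoonup} v$ we have $\vvol \to v$ strongly in $L^2(\Omega)$.

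The key observation is that $V_{\Th} \subset H^1(\Omega)$ is conforming in $H^1$. A uniformly bounded sequence $\seqh{\ul{v}}$ in $V_h$ therefore gives a bounded sequence $(\vvol)_h$ in $H^1(\Omega)$, and Rellich's theorem applies directly. This avoids any discrete compactness argument for broken spaces. Any subsequence of $(\vvol)_h$ has a further subsequence converging strongly in $L^2(\Omega)$ to some $w \in H^1(\Omega)$.

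The main step is to identify $w = v$, where $v$ is the weak DAS-limit. By \Cref{thm:discreteCompactnessResult}, along a further subsequence there is $\tilde v \in V$ with $\vvol \rightharpoonup \tilde v$ in $L^2$, $\nabla \vvol \rightharpoonup \nabla \tilde v$ and $\Delta_h \vh \rightharpoonup \Delta \tilde v$, and moreover $\vh \overset{\pi_h}{\rightharpoonup} \tilde v$. Since weak limits in the sense of DAS are unique, $\tilde v = v$. Strong convergence implies weak convergence, so $w = \tilde v = v$.

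Since every subsequence has a further subsequence with $\vvol \to v$ in $L^2(\Omega)$, and the limit does not depend on the subsequence, the whole sequence converges: $\iota_h \vh \to Jv$ strongly in $L^2(\Omega)$.

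The only delicate point is the uniqueness step. It relies on the last claim of \Cref{thm:discreteCompactnessResult} that the limit obtained there coincides with the $\pi_h$-weak limit. This can be checked directly from the definition of $\pih$: we have $(\vh,\pih \phi)_{V_h} = \gamma^{-1}(\Delta_h \vh, \Delta \phi)_{\Th} + (\vvol,\phi)_{H^1(\Omega)}$, which converges to $(\tilde v,\phi)_V$ for all $\phi \in V$. Hence $\tilde v$ is indeed the DAS weak limit.
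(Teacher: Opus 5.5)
Your proof is correct and follows essentially the same route as the paper's: you take $\iota_h$ as the volume projection into $H^1(\Omega)$ followed by the compact embedding into $L^2(\Omega)$, you take $J$ as the embedding $V\hookrightarrow L^2(\Omega)$, and you use \Cref{thm:discreteCompactnessResult} to identify the limit. Your explicit identification of the Rellich limit with the prescribed $\pi_h$-weak limit $v$ (via uniqueness of weak limits) and your subsequence-of-subsequence argument make precise two steps that the paper's proof covers only with ``up to subsequences''.
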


\begin{proof}
   Let $\Pvol : V_h \to V_{\Th}$, $\uh = (u_h,\ufac) \mapsto u_h \in V_{\Th}$ denote the projection onto the volume component. We define $\iota_h \coloneqq \iota_{H^1 \to L^2} \circ \Pvol: V_h \to L^2$, where $\iota_{H^1 \to L^2}$ is the compact embedding map $H^1 \hookrightarrow L^2$. Since $\Vert \iota_h \uh \Vert_{\Omega} \le \Vert \Pvol \uh \Vert_{H^1(\Omega)} \le \Vert \uh \Vert_{V_h}$, the operators $\iota_h$ are uniformly bounded. Define $J \coloneqq \iota_{H^1 \to L^2} \circ \iota_{H^2 \to H^1} : V \to L^2(\Omega)$. For any uniformly bounded sequence $\seqh{\ul{v}}$, \Cref{thm:discreteCompactnessResult} provides $v \in V$ such that $\Pvol \vh \overset{H^1}{\rightharpoonup} \iota_{H^2 \to H^1} v$ up to subsequences. Since $\iota_{H^1 \to L^2}$ is compact, we thus obtain the strong convergence $$\iota_h \vh = \iota_{H^1 \to L^2} \Pvol \vh \to \iota_{H^1 \to L^2} \iota_{H^2 \to H^1} v = J v$$ in $L^2(\Omega)$, which yields the claim.
 \end{proof}

\subsection{Stability}\label{sec:DA:stability}
We now apply \Cref{thm:discreteGarding} to establish the stability of the discrete problem \eqref{eq:discrete_weak_form}. To this end, we show that, under the same assumptions as in \Cref{lem:weakcoercivity}, the discrete sesquilinear form satisfies \eqref{eq:Abstract:DiscrGarding} provided that the stabilization parameter $\stab > 0$ is sufficiently large. In particular, the restrictions on $\alpha$ and $\beta$ from \Cref{lem:alphabetaConditionImpliesCordes} also apply in the discrete setting.

\begin{theorem}\label{thm:uniformlyCoercive}
   Suppose that $\CR \vert \gamma \Am - \Idm \vert_{F} < 1$ and that $\stab > 0$ is sufficiently large. Then, there exist constants $\tilde{C}_V, \tilde{C}_{L^2} > 0$ independent of $h$ such that 
   \begin{align*}
      \Re a_h(\uh,\uh) \ge \tilde{C}_{V} \Vert \uh \Vert^2_{V_h} - \tilde{C}_{L^2} \Vert \uvol \Vert^2_{\Omega}, \qquad \forall \uh \in V_h.
   \end{align*}
\end{theorem}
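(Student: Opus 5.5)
We mimic the proof of \Cref{lem:weakcoercivity}, working with the lifted form \eqref{eq:ahLift} and replacing \eqref{eq:ellipticReg} by its discrete analogue \Cref{lem:discreteHessianEst}. Since the Robin term is purely imaginary, we have
\begin{align*}
   \Re a_h(\uh,\uh) = \Re (\FOtermuh,\Delta_h \uh)_{\Th} + \stab \vert \uh \vert^2_{\sterm_h} + \Vert \nabla \uvol \Vert^2_{\Omega} - k^2 \Vert \uvol \Vert^2_{\Omega}.
\end{align*}
We split the principal term as
\begin{align*}
   \Re (\FOtermuh,\Delta_h \uh)_{\Th} = \gamma^{-1} \Vert \Delta_h \uh \Vert^2_{\Th} + \gamma^{-1} \Re (\gamma \FOtermuh - \Delta \uvol, \Delta_h \uh)_{\Th} + \gamma^{-1}\Re(\Delta \uvol - \Delta_h \uh, \Delta_h \uh)_{\Th}.
\end{align*}
The last term equals $-\gamma^{-1}\Re(\lift \uh,\Delta_h\uh)_{\Th}$. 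By \eqref{eq:LiftBounded} it is bounded by $\gamma^{-1}\Clift \vert \uh\vert_{\sterm_h}\Vert \Delta_h\uh\Vert_{\Th}$.

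For the middle term we apply the pointwise Cordes estimate \eqref{eq:cordesEstimate} elementwise; it holds for piecewise $H^2$ functions since it is purely algebraic. This gives $\vert \gamma \FOtermuh - \Delta \uvol\vert \le \vert\gamma\Am-\Idm\vert_F \vert \hessian \uvol\vert_F$. We then insert \Cref{lem:discreteHessianEst}, which is valid for $h \le 1$; larger $h$ form only finitely many relevant cases, or one can absorb the powers of $h$ by enlarging constants. Writing $\delta\coloneqq\vert\gamma\Am-\Idm\vert_F$ and $\dCr=\CR\delta<1$, we obtain
\begin{align*}
   \gamma^{-1}\vert(\gamma \FOtermuh - \Delta \uvol, \Delta_h \uh)_{\Th}\vert \le \gamma^{-1}\left(\dCr \Vert\Delta_h\uh\Vert^2_{\Th} + \dCr\Vert\uvol\Vert_\Omega\Vert\Delta_h\uh\Vert_{\Th} + \delta\CE\vert\uh\vert_{\sterm_h}\Vert\Delta_h\uh\Vert_{\Th}\right).
\end{align*}

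Fix $\epsilon>0$ with $\dCr(1+2\epsilon)<1$. We treat the cross terms with weighted Young inequalities. The $\Vert\uvol\Vert_\Omega$ cross term contributes $\epsilon\dCr\Vert\Delta_h\uh\Vert^2 + \dCr(4\epsilon)^{-1}\Vert\uvol\Vert^2$. The two $\sterm_h$ cross terms contribute $\epsilon'\Vert\Delta_h\uh\Vert^2 + C_{\epsilon'}(\delta\CE+\Clift)^2\vert\uh\vert^2_{\sterm_h}$, with $\epsilon'$ small, for instance $\epsilon'=\epsilon\dCr$ when $\dCr>0$, or any small number otherwise. This yields
\begin{align*}
   \Re a_h(\uh,\uh) \ge \frac{1-\dCr(1+2\epsilon)}{\gamma}\Vert\Delta_h\uh\Vert^2_{\Th} + \Big(\stab - \frac{C_{\epsilon'}(\delta\CE+\Clift)^2}{\gamma}\Big)\vert\uh\vert^2_{\sterm_h} + \Vert\nabla\uvol\Vert^2_\Omega - \Big(k^2+\frac{\dCr}{4\gamma\epsilon}\Big)\Vert\uvol\Vert^2_\Omega.
\end{align*}
Choosing $\stab \ge \gamma^{-1}\big(1 + C_{\epsilon'}(\delta\CE+\Clift)^2\big)$ makes the stabilization coefficient at least $\gamma^{-1}$, which matches the $\sterm_h$ part of $\Vert\cdot\Vert_{V_h}$. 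Adding and subtracting $\Vert\uvol\Vert^2_\Omega$ then gives the claim with
\begin{align*}
   \tilde C_V=\min\{1-\dCr(1+2\epsilon),1\}, \qquad \tilde C_{L^2}=1+k^2+\dCr(4\gamma\epsilon)^{-1}.
\end{align*}

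The main obstacle is the middle (Cordes) term. The discrete Hessian bound carries the extra $\CE\vert\uh\vert_{\sterm_h}$, and the lifting mismatch between $\Delta\uvol$ and $\Delta_h\uh$ must be absorbed by the penalty. One must keep the coefficient of $\Vert\Delta_h\uh\Vert^2$ strictly below $1$ relative to $\gamma^{-1}$, with the same threshold $\dCr<1$ as in the continuous case. Accordingly, all $\sterm_h$-related products are sent to the stabilization side, and this is precisely why $\stab$ must be sufficiently large.
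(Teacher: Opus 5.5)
Your proposal is correct and is essentially the paper's proof. Like the paper, you use the lifted form \eqref{eq:ahLift}, apply the Cordes estimate elementwise together with \Cref{lem:discreteHessianEst}, push every $\sterm_h$ cross term onto the penalty with Young's inequality and a large $\stab$, and arrive at the same coefficient $1-\dCr(1+2\epsilon)$. The differences are cosmetic: you merge the $\CE$ and lifting cross terms into one Young inequality where the paper uses separate $\epsilon_1,\epsilon_2$. You also explicitly handle the restriction $h\le 1$ and the degenerate case $\dCr=0$, which the paper's constant $\dCe^2/(4\dCr\epsilon_1)$ silently ignores; in that case your $\tilde C_V$ should read $1-\epsilon'$ rather than $1$.
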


\begin{proof}
   We adapt the proof of \Cref{lem:weakcoercivity} to the discrete setting; the main difference is that the discrete Hessian estimate from \Cref{lem:discreteHessianEst} replaces its continuous counterpart. Throughout, we abbreviate $\dCr \coloneqq \CR \vert \gamma \Am - \Idm \vert_{F}$ and $\dCe \coloneqq \CE \vert \gamma \Am - \Idm \vert_{F}$ and note that $\dCr < 1$ by assumption.
   Starting from the rewriting \eqref{eq:ahLift} of the discrete sesquilinear form, we obtain for any $\epsilon_1 > 0$ that
   \begin{align*}
      \left\vert (\gamma \FOtermuh - \Delta \uvol, \Delta_h \uh)_{\Th} \right\vert &\overset{\eqref{eq:cordesEstimate}}{\le} \vert \gamma \Am - \Idm \vert_{F} \Vert \hessian \uvol \Vert_{\Th} \Vert \Delta_h \uh \Vert_{\Th} \\
      &\overset{\eqref{eq:discreteHessianEst}}{\le} \dCr (1 + 2 \epsilon_1) \Vert \Delta_h \uh \Vert^2_{\Th} + \frac{\dCr}{4 \epsilon_1} \Vert \uvol \Vert^2_{\Omega} + \frac{\dCe^2}{4 \dCr \epsilon_1} \vert \uh \vert^2_{\sterm_h},
   \end{align*}
   where the parameter $\epsilon_1$ stems from a weighted Young's inequality. Similarly, \eqref{eq:LiftBounded} yields for any $\epsilon_2 > 0$ that
   \begin{align}
      \Re (\Delta \uvol, \Delta_h \uh)_{\Th} \ge  (1 - \epsilon_2) \Vert \Delta_h \uh \Vert^2_{\Th} - \frac{\Clift^2}{4 \epsilon_2} \vert \uh \vert^2_{\sterm_h}.
   \end{align}
   Combining the two estimates, we obtain the discrete counterpart of \eqref{eq:ContCoercivity:AD2Estimate}:
   \begin{align*}
      \Re (\FOtermuh,\Delta_h \uh)_{\Th} \ge &\frac{1}{\gamma}(1-\dCr(1 + 2 \epsilon_1) - \epsilon_2) \Vert \Delta_h \uh \Vert^2_{\Th} - \frac{\dCr}{4 \gamma \epsilon_1} \Vert \uvol \Vert^2_{\Omega} \\
      &- \gamma^{-1} \left( \frac{\dCe^2}{4 \dCr \epsilon_1} + \frac{\Clift^2}{4 \epsilon_2} \right) \vert \uh \vert^2_{\sterm_h}.
   \end{align*}
   Since $\dCr < 1$, we can choose $\epsilon_1, \epsilon_2 > 0$ so small that $1 - \dCr(1 + 2\epsilon_1) - \epsilon_2 > 0$. If, in addition, $\stab$ is large enough that $\stab \gamma > \dCe^2 (4 \dCr \epsilon_1)^{-1} + \Clift^2 (4 \epsilon_2)^{-1}$, then
   \begin{align*}
      \Re a_h(\uh,\uh) &\ge \frac{C}{\gamma} \Vert \Delta_h \uh \Vert^2_{\Th}  + \Vert \nabla \uvol \Vert^2_{\Omega} + \Vert \uvol \Vert^2_{\Omega} + (\stab \gamma - \dCe^2 (4 \dCr \epsilon_1)^{-1} - \Clift^2(4 \epsilon_2)^{-1}) \gamma^{-1} \vert \uh \vert^2_{\sterm_h} \\
      &\quad - (1 + k^2 + \dCr(4 \gamma \epsilon_1)^{-1}) \Vert \uvol \Vert^2_{\Omega} \\
      &\ge \tilde{C}_{V} \Vert \uh \Vert^2_{V_h} - \tilde{C}_{L^2} \Vert \uvol \Vert^2_{\Omega},
   \end{align*}
   which proves the claim.
\end{proof}

Thus, the discrete sesquilinear form satisfies \eqref{eq:Abstract:DiscrGarding} which, together with \Cref{lem:discreteIota}, allows us to apply \Cref{thm:discreteGarding} to obtain the following result. 

\begin{theorem}[Stability \& convergence]\label{thm:Stability}
   Suppose that $\beta$ satisfies \eqref{eq:alphabetaCondition} and that \Cref{ass:injectivity} holds.
   If $\stab > 0$ is sufficiently large, then there exists $h_0 > 0$ such that for all $h \le h_0$ the discrete problem \eqref{eq:discrete_weak_form} has a unique solution $\uh \in V_h$. Furthermore, if $u \in V$ solves \eqref{eq:Cont:WF}, we have that 
   \begin{align}
      \lim_{h \to 0} \Vert \u - \uh \Vert_{V_h} = 0.
   \end{align}
\end{theorem}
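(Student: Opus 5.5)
The plan is to deduce the statement from \Cref{thm:discreteGarding}, checking its hypotheses one at a time with the results of \Cref{sec:weakForms} and \Cref{sec:NumAnalysis}. Convergence will then follow from stability together with asymptotic consistency, via a standard DAS argument.

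\emph{Continuous level.} Since $\beta$ satisfies \eqref{eq:alphabetaCondition}, \Cref{lem:alphabetaConditionImpliesCordes} gives $\CR \vert \gamma \Am - \Idm \vert_F < 1$. \Cref{lem:weakcoercivity} then yields \eqref{eq:Abstract:Garding} with $W = L^2(\Omega)$, and the embedding $V \hookrightarrow L^2(\Omega)$ is compact. Hence $A$ is Fredholm of index zero by \Cref{lem:GardingsInequality}. Together with \Cref{ass:injectivity}, the Fredholm alternative gives well-posedness of \eqref{eq:Cont:WF}.

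\emph{Discrete level.} The projections $\pih$ satisfy \eqref{eq:pihCondition} by \Cref{lem:pihtoZero}, since $\Vert \u \Vert_{V_h} = \Vert u\Vert_V$. \Cref{lem:asymptoticConsistencyah} gives asymptotic consistency, so $(a_h,\pih,V_h)$ is a DAS of $(a,V)$. \Cref{lem:discreteIota} verifies \Cref{ass:InclusionConsistentlyCompact} with $\iota_h = \iota_{H^1\to L^2}\circ\Pvol$. For $\stab$ large enough, \Cref{thm:uniformlyCoercive} provides \eqref{eq:Abstract:DiscrGarding}, because $\Vert \uvol\Vert_\Omega = \Vert \iota_h \uh\Vert_W$. \Cref{thm:discreteGarding} therefore yields $h_0>0$ such that \eqref{eq:Abstract:inf-sup} holds with a uniform constant $\sconst$ for all $h\le h_0$. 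In finite dimensions this is existence and uniqueness of $\uh$.

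\emph{Convergence.} We split $\Vert \u - \uh\Vert_{V_h} \le \Vert \u - \pih u\Vert_{V_h} + \Vert \pih u - \uh\Vert_{V_h}$; the first term tends to zero by \Cref{lem:pihtoZero}. For the second term, uniform stability lets us pick $\vh$ with $\Vert \vh\Vert_{V_h}=1$ and
\[
\sconst \Vert \pih u - \uh\Vert_{V_h} \le \vert a_h(\pih u - \uh,\vh)\vert = \vert a_h(\pih u,\vh) - (f,\vvol)_\Omega\vert .
\]
I argue by contradiction. Suppose a subsequence keeps this quantity bounded below by some $\delta>0$. By \Cref{thm:discreteCompactnessResult}, a further subsequence satisfies $\vh \overset{\pih}{\rightharpoonup} v$ and $\vvol \rightharpoonup v$ in $L^2$. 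Asymptotic consistency then gives $a_h(\pih u,\vh)\to a(u,v)$, and $(f,\vvol)_\Omega \to (f,v)_\Omega$. Since $u$ solves \eqref{eq:Cont:WF}, the difference tends to zero, which is a contradiction.

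The main obstacle is the convergence step. Asymptotic consistency gives only weak-type control along subsequences, with no rate, so the test functions $\vh$ must be handled through compactness instead of a C\'ea-type estimate. The subsequence-contradiction argument above is designed to avoid needing any regularity beyond $u\in V$. The other delicate point, uniformity of $\sconst$, is entirely delegated to \Cref{thm:discreteGarding}, i.e.\ \cite[Thm.~3]{HLS22H1}.
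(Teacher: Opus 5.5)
Your proposal is correct and follows the paper's proof essentially step by step. You verify the hypotheses of \Cref{thm:discreteGarding} with the same results (\Cref{lem:weakcoercivity}, \Cref{lem:pihtoZero}, \Cref{lem:asymptoticConsistencyah}, \Cref{lem:discreteIota}, \Cref{thm:uniformlyCoercive}), and you obtain convergence from the triangle inequality, uniform stability, \Cref{thm:discreteCompactnessResult}, and asymptotic consistency; your choice of an exact maximizer $\vh$ plus the subsequence-contradiction argument is a more careful rendering of the paper's ``up to subsequences'' step, which treats the supremum over $\wh$ somewhat loosely.
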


\begin{proof}
   By \Cref{lem:asymptoticConsistencyah}, $a_h(\cdot,\cdot)$ is asymptotically consistent with $a(\cdot,\cdot)$, and $\pih$ satisfies \eqref{eq:pihCondition} by \Cref{lem:pihtoZero}. Thus $(a_h,\pih,V_h)$ is a DAS of $(a,V)$. Moreover, $a_h(\cdot,\cdot)$ satisfies \eqref{eq:Abstract:DiscrGarding} by \Cref{thm:uniformlyCoercive}, \Cref{ass:InclusionConsistentlyCompact} holds by \Cref{lem:discreteIota}, and $a(\cdot,\cdot)$ satisfies \eqref{eq:Abstract:Garding} by \Cref{lem:weakcoercivity}. Hence, under \Cref{ass:injectivity}, \Cref{thm:discreteGarding} yields the existence of $h_0 > 0$ such that $a_h(\cdot,\cdot)$ is stable, and \eqref{eq:discrete_weak_form} is uniquely solvable for all $h \le h_0$.

   It remains to prove convergence. For $h \le h_0$, let $\uh \in V_h$, $h \le h_0$ be the unique solution of \eqref{eq:discrete_weak_form}. Then, the triangle inequality gives that 
   \begin{align}\label{eq:convergenceTriangle}
      \Vert \u - \uh \Vert_{V_h} \le \Vert \u - \pih u \Vert_{V_h} + \Vert \pih u - \uh \Vert_{V_h}.
   \end{align}
   The first term converges to zero by \Cref{lem:pihtoZero}. For the second term, stability implies that 
   \begin{align}\label{eq:convergenceStab}
      \Vert \pih u - \uh \Vert_{V_h} \le \sconst^{-1} \! \! \! \! \! \!\! \! \! \sup_{\wh \in V_h, \Vert \wh \Vert_{V_h} = 1} \vert a_h(\pih u - \uh, \wh) \vert.
   \end{align} 
   Since the sequence $\seqh{\ul{w}}$, $\wh \in V_h$, is uniformly bounded, \Cref{thm:discreteCompactnessResult} and the asymptotic consistency of $a_h(\cdot,\cdot)$ imply that there exists $w \in V$ such that $\wvol \overset{L^2}{\rightharpoonup} w$ and (up to subsequences) we have that 
   \begin{align*}
      a_h(\pih u - \uh, \wh) = a_h(\pih u, \wh) - (f,\wvol)_{\Omega} \to a(u,w) - (f,w)_{\Omega} = 0,
   \end{align*}
   where we used that $\uh$ solves \eqref{eq:discrete_weak_form} and $u$ solves \eqref{eq:Cont:WF}.
   Together with \eqref{eq:convergenceTriangle}, this proves the claim. 
\end{proof}

\subsection{Convergence estimate}\label{subsec:NA:ConvEst}
We close the analysis of the discrete problem with an estimate for the rate of convergence in \Cref{thm:Stability}, assuming that the exact solution $u \in V$ has additional regularity.

\begin{theorem}[Convergence estimate]\label{thm:convergenceEst}
   Suppose that $\beta$ satisfies \eqref{eq:alphabetaCondition} and that \Cref{ass:injectivity} holds.
   Let $u \in V \cap H^{2+s}(\Omega)$, $s > 1$, be the solution to \eqref{eq:Cont:WF}, let $\stab  > 0$ be sufficiently large, and let $h_0 > 0$ such that the discrete problem \eqref{eq:discrete_weak_form} has a unique solution $\uh \in V_h$ for all $h \le h_0$. Then, there exists a constant $C > 0$ independent of $h$ such that
   \begin{align}
      \Vert \u - \uh \Vert_{V_h} \le C h^{\min\{p-1,s\}} \Vert u \Vert_{H^{2+s}(\Omega)}.
   \end{align}
\end{theorem}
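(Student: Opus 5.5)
We follow a Strang-type argument built on the uniform inf-sup stability from \Cref{thm:Stability}. We compare $\uh$ not with $\pih u$ but with the interpolant $\Pih u$ from \Cref{lem:PihApprox}, whose approximation order is explicit. The triangle inequality gives
\begin{align*}
   \Vert \u - \uh \Vert_{V_h} \le \Vert \u - \Pih u \Vert_{V_h} + \Vert \Pih u - \uh \Vert_{V_h}.
\end{align*}
The first term is bounded by $C h^{\min\{p-1,s\}} \Vert u \Vert_{H^{2+s}(\Omega)}$ by \Cref{lem:PihApprox}. For the second term, stability for $h \le h_0$ gives
\begin{align*}
   \Vert \Pih u - \uh \Vert_{V_h} \le \sconst^{-1} \sup_{\Vert \wh \Vert_{V_h} = 1} \vert a_h(\Pih u - \uh, \wh) \vert .
\end{align*}
Using \eqref{eq:discrete_weak_form}, we write $a_h(\Pih u - \uh,\wh) = a_h(\Pih u - \u,\wh) + \bigl(a_h(\u,\wh) - (f,\wvol)_{\Omega}\bigr)$. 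Here $a_h(\u,\cdot)$ denotes the original (non-lifted) form evaluated on the tuple $\u = (u,\dn u)$. This is well defined because $s>1$ gives $u \in H^{3+}(\Omega)$, so $\FOtermu$ has an $L^2$ trace on facets and a normal derivative in $L^2(\partial\Th)$.

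\textbf{Consistency.} We show $a_h(\u,\wh) = (f,\wvol)_{\Omega}$ exactly, so the scheme is strongly consistent for smooth solutions. Starting from the strong form \eqref{eq:nematic_helmholtz_korteweg}, we multiply by $\wvol \in V_{\Th} \subset H^1$ and integrate by parts elementwise twice. The first integration by parts yields $-(\nabla \Delta(\FOtermu),\nabla\wvol)_{\Omega}$ plus boundary terms, which we handle with the Robin condition \eqref{eq:BNDCondsNew:a} and $\wvol=0$ on $\GammaD$. The second integration by parts is elementwise and yields $(\FOtermu,\Delta\wvol)_{\Th} - (\FOtermu,\dn \wvol)_{\partial\Th}$. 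Because $\FOtermu$ is single-valued on interior facets and vanishes on $\partial\Omega$ by \eqref{eq:BNDCondsNew:b}, and $\wfac$ is single-valued, we may subtract $(\FOtermu,\mu_F)_{\partial\Th}=0$. This gives $-(\FOtermu,\hdgjump{\dn \wvol})_{\partial\Th}$, and $\sterm_h(\u,\wh)=0$ since $\hdgjump{\dn\u}=0$. The remaining terms match directly. This step requires $s>1$: with only $H^3$ regularity the traces of $\FOtermu$ would be merely $H^{1/2}$, which is insufficient for the facet terms.

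\textbf{Boundedness on the interpolation error.} It remains to bound $\vert a_h(\Pih u - \u,\wh)\vert$ by $C(\Vert \u - \Pih u\Vert_{V_h} + \text{facet terms})\Vert \wh\Vert_{V_h}$. We expect this to be the main obstacle. The non-lifted form contains $(\mathbb{A}\!:\!\hessian(\Pi_h^p u - u),\hdgjump{\dn \wvol})_{\partial\Th}$, and $\mathbb{A}\!:\!\hessian(\Pi_h^p u-u)$ is no longer polynomial, so the lifting rewrite \eqref{eq:ahLift} does not apply. We will bound this term by
\begin{align*}
   \Vert h^{1/2}\mathbb{A}\!:\!\hessian(u-\Pi_h^p u)\Vert_{\partial\Th}\,\vert\wh\vert_{\sterm_h}.
\end{align*}
By the multiplicative trace inequality and \eqref{eq:LagrInterpolEst},
\begin{align*}
   \Vert h^{1/2}\mathbb{A}\!:\!\hessian(u-\Pi_h^p u)\Vert_{\partial\Th} \le C\bigl(\vert u-\Pi_h^pu\vert_{H^2(\Th)} + h\vert u-\Pi_h^pu\vert_{H^3(\Th)}\bigr) \le C h^{\min\{p-1,s\}}\Vert u\Vert_{H^{2+s}(\Omega)}.
\end{align*}
This again uses $s\ge1$.

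For the volume term, we rewrite $\Delta\wvol = \Delta_h\wh - \lift\wh$ and use \eqref{eq:LiftBounded}, which gives control by $\Vert\hessian(u-\Pi_h^pu)\Vert_{\Th}\Vert\wh\Vert_{V_h}$. The stabilization term is bounded via $\vert \Pih u\vert_{\sterm_h}$ exactly as in the proof of \Cref{lem:PihApprox}. The lower-order terms and the Robin term are bounded by the $H^1$ error together with a trace inequality. Collecting these bounds and dividing by $\sconst$ gives the claim.
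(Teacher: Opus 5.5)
Your proof is correct. Its skeleton is the same as the paper's: triangle inequality, uniform inf-sup stability for $h \le h_0$, exact consistency $a_h(\u,\wh) = (f,\wvol)_{\Omega}$, and a boundedness estimate for $a_h$ on the approximation error. You differ in the element you compare with and in how you obtain boundedness.

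The paper inserts the abstract projection $\pih u$ and controls $\Vert \u - \pih u \Vert_{V_h}$ by $\Vert \u - \Pih u \Vert_{V_h}$ using the best-approximation property from the proof of \Cref{lem:pihtoZero}. It then bounds $\vert a_h(\pih u - \u,\wh)\vert \le C \Vert \u - \pih u \Vert_{V_h} \Vert \wh \Vert_{V_h}$ by a one-line appeal to continuity of $a_h$. That continuity lemma, however, is only established on $V_h \times V_h$ via the lifted form \eqref{eq:ahLift}, which needs a polynomial first argument. In the non-lifted form, for which consistency holds exactly, the facet term $(\FOterm{(\pi_h u - u)}, \hdgjump{\dn \wvol})_{\partial \Th}$ involves facet traces of a non-polynomial Hessian. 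These are not controlled by $\Vert \u - \pih u \Vert_{V_h}$, so the paper's appeal to continuity glosses over this step.

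By working with the explicit interpolant $\Pih u$, you bound that facet term directly with the multiplicative trace inequality and the $H^3$ interpolation estimate. This makes the boundedness step rigorous. You give up the best-approximation shortcut, but that costs nothing, since \Cref{lem:PihApprox} already delivers the rate $h^{\min\{p-1,s\}}$. Your remaining term-by-term bounds are fine: the volume term via $\Delta \wvol = \Delta_h \wh - \lift \wh$, stabilization via $\vert \Pih u \vert_{\sterm_h}$, and the lower-order and Robin terms via $H^1$ and trace estimates.

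Two small corrections, neither of which affects the argument since $s>1$ is assumed. First, the first integration by parts should produce $-(\nabla(\FOtermu), \nabla \wvol)_{\Omega}$, not a term involving $\nabla \Delta$. Second, your remark that $H^3$ regularity would be insufficient for the facet terms is inaccurate. For $s \ge 1$ the trace of $\FOtermu$ lies in $H^{1/2}(\partial T) \subset L^2(\partial T)$, which suffices to pair with the polynomial jumps; your own interpolation bound for the facet term likewise only uses $s \ge 1$.
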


\begin{proof}
   Starting from \eqref{eq:convergenceTriangle}, we only have to estimate the discrete error $\Vert \pih u - \uh \Vert_{V_h}$ because we obtain the desired rate of convergence for the first term with the arguments from the proof of \Cref{lem:pihtoZero}.
   The regularity assumption on $u$ ensures that $\Delta u, \nHn{u} \in H^1(\Omega)$ and $\Delta^2 u, \nabla \cdot \nabla \nHn{u} \in H^{-1}(\Omega)$, so that $u$ can be inserted into the discrete sesquilinear form $a_h(\cdot,\cdot)$. Using integration by parts (in reverse) and removing the facet variable yields that
   \begin{align*}
      a_h(\u,\vh) &= \langle \Delta (\FOtermu), \vvol \rangle_{H^{-1},H^1} - (\Delta u,\vvol)_{\Omega} - k^2 (u,\vvol)_{\Omega} = (f,\vvol)_{\Omega} = a_h(\uh,\vh),
   \end{align*}
   that is, the sesquilinear form is consistent in the classical sense: $ a_h(\u - \uh,\vh) = 0$ for all $\vh \in V_h$.
   Combining stability as in \eqref{eq:convergenceStab}, this Galerkin orthogonality, and the continuity of $a_h(\cdot,\cdot)$ yields that
   \begin{align*}
      \sconst \Vert \pih u - \uh \Vert_{V_h} \le \! \sup_{\wh \in V_h} \frac{\vert a_h(\pih u - \uh, \wh) \vert}{\Vert \wh \Vert_{V_h}} = \! \! \sup_{\wh \in V_h} \frac{\vert a_h(\pih u - \u, \wh) \vert}{\Vert \wh \Vert_{V_h}} \le C_{\text{{cont}}}\Vert u - \pih u \Vert_{V_h}.
   \end{align*}
   A second application of \Cref{lem:PihApprox} concludes the proof.
\end{proof}

\section{Numerical experiments}\label{sec:numex}
In this section, we present numerical examples. First, we verify the convergence rates predicted by \Cref{thm:convergenceEst} numerically and further investigate the stability of the discrete problem. Afterwards, we showcase the anisotropic effect of the nematic field in two and three dimensions. 
All experiments are implemented using the finite element software \texttt{Netgen/NGSolve} \cite{Sch97,Sch14}. 
Replication data are available in \cite{ReplicationData}.
For readability, we leave out renormalization factors in front of the nematic fields $\bm{n}$.

\subsection{Convergence study \& stability}
We consider the unit disk $\Omega = \{ \bm{x} \in \mathbb{R}^2 : \Vert \bm{x} \Vert_{2} \le 1 \} \subset \mathbb{R}^2$ and set sound-soft boundary conditions everywhere, that is, $\GammaD = \partial \Omega$. 
As an exact solution, we consider a plane wave of the form
\begin{equation}
   u_{\text{ex}}(\bm{x}) = e^{i(\bm{k}\cdot\bm{x})}, \qquad \bm{k}\in \mathbb{R}^d \label{eq:planewave}, 
\end{equation}
where the wave-vector $\bm{k}$ satisfies the following dispersion relation: 
\begin{equation}
   \alpha \kappa^4 + \beta \kappa^2 (\bm{k}^{T}\bm{n})^2+ \kappa^2 - k^2 = 0, \qquad \kappa = \vert \bm{k} \vert. \label{eq:dispersion}
\end{equation}
Then, \eqref{eq:planewave} is a solution to \eqref{eq:nematic_helmholtz_korteweg} with right-hand side zero \cite{FZ24,FvBZ25}. We homogenize the problem to obtain sound-soft boundary conditions.

Since we consider pure sound-soft boundary conditions and $\Omega \subset \mathbb{R}^2$ is convex, the discrete problem is stable for all choices of $\alpha > 0$ and $\beta > - \alpha$. Moreover, since \eqref{eq:planewave} is sufficiently smooth, we expect convergence of order $\mathcal{O}(h^{p-1})$ by \Cref{thm:convergenceEst}.  
In \Cref{fig:numex:convergenceCircle}, we compare the errors in the broken $\Vert \cdot \Vert_{V}$-norm for polynomial degrees $p \in \{ 2,3,4,5\}$. We fix the nematic field $\bm{n} = (1,0)^{\top}$, the wave number $k = 30$, the stabilization parameter $\stab = 50 p^2$, and the parameter $\alpha = 10^{-2}$. Then, we consider the cases $\beta \in \{-10^{-3}, 0, 10^{-3}, 1\}$. In most cases, we observe that the errors decay with the predicted rate of $\mathcal{O}(h^{p-1})$. Exceptions include minor fluctuations for $p = 2$ and a slight error increase in the final refinement step with $\beta = 1$ and $p = 5$. These deviations may be attributed to the generic choice of $\stab$ across all values of $\beta$; a better choice of the parameter $\stab$ would depend on $\gamma$ and $\vert \gamma \Am - \Idm \vert_{F}$.

\begin{figure}[!htbp]
  \centering
  \includegraphics[width=0.95\textwidth]{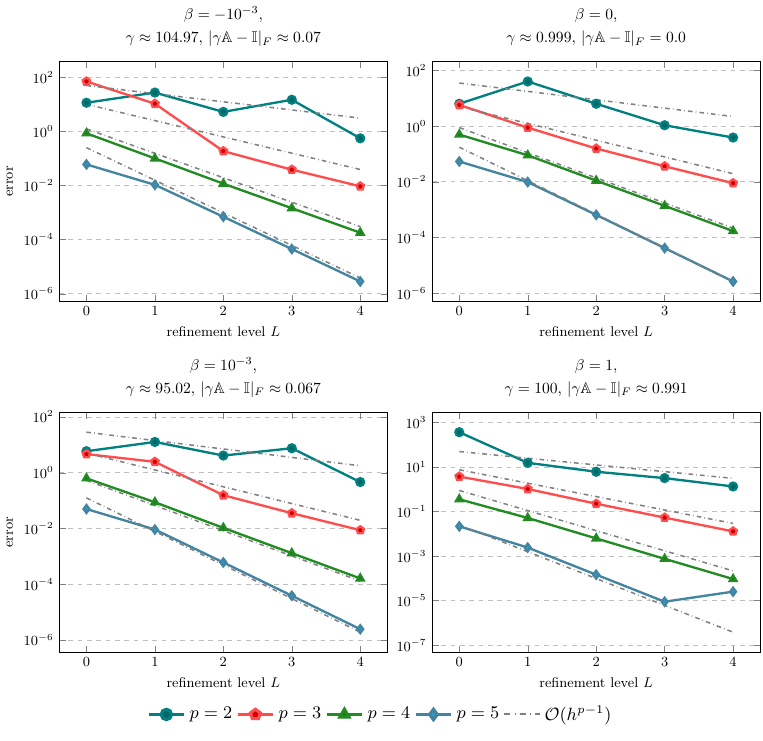}
  \caption{We consider the nematic director $\bm{n} = (1,0)^{\top}$, the wave number $k = 30$, stabilization parameter $\stab = 50 p^2$, and fix the parameter $\alpha = 10^{-2}$. We compare the cases $\beta = -10^{-3}$ (top left), $\beta = 0$ (top right), $\beta = 10^{-3}$ (bottom left), and $\beta = 1$ (bottom right). We compute the error in the broken $\Vert \cdot \Vert_{V}$-norm on a sequence of uniformly refined meshes with $h \approx 0.2 \cdot 2^{-L}$, where $L \in \{0,1,2,3,4\}$ is the refinement level. For polynomial degrees $p \in \{2,3,4,5\}$, we observe convergence with order $\mathcal{O}(h^{p-1})$ as expected by \Cref{thm:convergenceEst}.}
  \label{fig:numex:convergenceCircle}
\end{figure}

To further investigate the stability of the method with respect to the smallness assumptions on $\alpha$ and $\beta$, we consider the unit ball $\Omega = \{ \bm{x} \in \mathbb{R}^d : \Vert \bm{x} \Vert \le 1 \}$ and set sound-soft boundary conditions $\GammaD = \partial \Omega$. \Cref{thm:Stability} together with \eqref{eq:alphabetaCondition} implies stability for $\beta \in (-\alpha, \infty) $ in the two-dimensional case and for $\beta \in (-\alpha, 3\alpha)$ in the three-dimensional case. In \Cref{fig:numex:stability}, we estimate the discrete inf-sup constant on a fixed mesh for $\beta \in (-1,4]$, $\alpha = 1$, $k = 5$, and $p = 2$. When $\beta$ approaches $-1$, the inf-sup constant seems to approach zero, i.e., the method is not stable. Curiously, the inf-sup constant remains positive in the three-dimensional case, even for $\beta  > 3 \alpha$, illustrating that the condition \eqref{eq:alphabetaCondition} is sufficient, but not necessary.

\begin{figure}[!htbp]
   \begin{center}
      \includegraphics[width=0.85\textwidth]{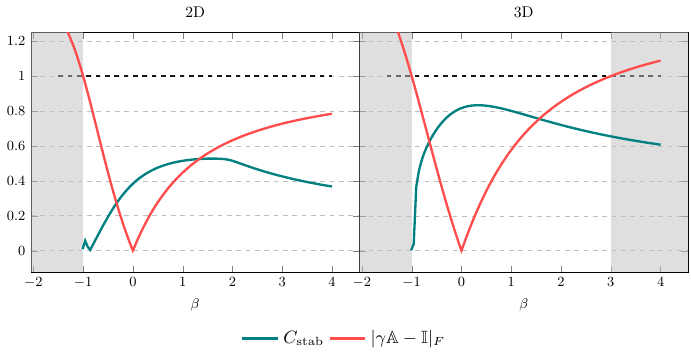}
   \end{center}
   \caption{On a fixed triangulation of the unit ball in $\mathbb{R}^d$, $d = 2,3$, we estimate the discrete inf-sup constant $C_{\text{stab}}$ for $\beta \in (-1,4]$ with $\alpha = 1$ and $k = 5$ fixed. We also plot $\vert \gamma \Am - \Idm \vert_{F}$ as a function of $\beta$ and indicate the regions (in gray) where the condition $\vert \gamma \Am - \Idm \vert_{F} < 1$ is violated. For $\beta \to -1$, the inf-sup constant seems to approach zero.}
   \label{fig:numex:stability}
\end{figure}

\begin{remark}[Computational aspects]
   The main motivation for hybridization of the $C^0$-interior penalty ($C^0$-IP) method is the reduction in computational costs due to static condensation.
   In \Cref{tab:numex:crunching}, we compare the degrees of freedom (\texttt{dofs}) and non-zero entries (\texttt{nzes}) of the system matrices for $C^0$-HIP and $C^0$-IP discretizations. The hybridization leads to significantly lower computational costs if the polynomial degree is large enough. This is in line with our expectations, since static condensation only eliminates higher-order interior bubbles. For a detailed comparison of both methods for the biharmonic problem, we refer to \cite{DE24C0HHO}.   
   \begin{table}[!htbp]
      \resizebox{0.5\textwidth}{!}{
      \begin{tabular}{c|cc|cc}%
         & \multicolumn{2}{c|}{$C^{0}$-hybrid IP}& \multicolumn{2}{c}{$C^{0}$-IP} \\ \hline
         $p$ & \texttt{dofs} & \texttt{nzes}  & \texttt{dofs} & \texttt{nzes}
         \begin{filecontents*}{standalones/numbercrunching.csv}
order,HDGndofs,HDGncdofs,HDGnzes,HDGne,HDGkappa,HDGkappaDiag,DGndofs,DGncdofs,DGnzes,DGne,DGkappa,DGkappaDiag,HDGnoLSPndofs,HDGnoLSPncdofs,HDGnoLSPnzes,HDGnoLSPne,HDGnoLSPkappa,HDGnoLSPkappaDiag
2,6257,3265,54881,944,188391.61,1547.01,1969,1809,46801,944,-24411.5,3831.66,4881,4721,101969,944,21829.85,1605.06
3,10113,6097,163297,944,44554.07,2450.95,4369,3185,171889,944,4231.29,1041.37,8737,7553,238865,944,7676.61,1832.13
5,20657,11761,551009,944,28676.3,2334.66,12001,5937,986401,944,6522.64,1517.97,19281,13217,683537,944,27463.11,3041.83
7,34977,17425,1166561,944,27056.37,2457.68,23409,8689,3293361,944,10359.16,2059.06,33601,18881,1356049,944,37064.5,3661.27
9,53073,23089,2009953,944,27343.88,2909.1,38593,11441,8291233,944,14150.16,2619.44,51697,24545,2256401,944,44946.52,4255.62
\end{filecontents*}
\csvreader[head to column names]{standalones/numbercrunching.csv}{}
         {\\ \rule{0pt}{2ex} \order & \num{\HDGnoLSPndofs} & \num{\HDGnoLSPnzes} & \num{\DGndofs} & \num{\DGnzes} } \\ \hline
      \end{tabular}
      }
      \vspace*{0.3cm}
      \caption{We compare the degrees of freedom (\texttt{dofs}) and the non-zero entries (\texttt{nzes}) of the system matrix of a $C^{0}$-HIP discretization with a $C^{0}$-IP discretization. The numbers are computed on the unit square with mesh size $h = 0.05$ (i.e., $944$ elements) for polynomial degrees $p \in \{2,3,5,7,9\}$. }
      \label{tab:numex:crunching}
   \end{table}
\end{remark}

\subsection{Anisotropic wave propagation in two dimensions}
The nematic field introduces an anisotropy in the wave propagation, as already shown in \Cref{fig:intro:waveguide}. 
In this example, we demonstrate this phenomenon in a scattering scenario. Inside a rectangle $D_0 = (0,1.35)^2$, we place an obstacle $D_1$ defined as the interior of a polygon with vertices 
\begin{align*}
   \{ (0.5,     0.4),
   (0.5475,  0.3725),
   (0.5285,  0.4),
   (0.5475,  0.4275) \}.
\end{align*}
Then, we set $\Omega \coloneqq D_0 \setminus D_1 \subset \mathbb{R}^2$ with $\GammaD = \partial D_1$ and $\GammaR = \partial D_0$.
For the source term, we choose a Gaussian pulse centered at $(0.25,0.5)$, that is,
\begin{align*}
   f(\bm{x}) = \sqrt{500/\pi} \exp \left( -500((x_1-0.25)^2 + (x_2-0.5)^2) \right), \quad \bm{x} \in \mathbb{R}^2.  
\end{align*}

In \Cref{fig:numex:NematicScattering}, we compare the real part of the computed solutions for three nematic fields. As a baseline case without anisotropy, we consider $\bm{n} = (0,0)^{\top}$. We then compare configurations in which the nematic field is aligned with the two "wings" of the obstacle, that is, the two sides of the obstacle facing the source. Since the wave propagates faster in the direction of the nematic field, this configuration accelerates the wave propagation on one side of the obstacle, while slowing it down on the other.
Indeed, we observe that the resulting scattering patterns differ significantly.

\begin{figure}[!htbp]
   \centering
   \includegraphics[width=0.98\textwidth]{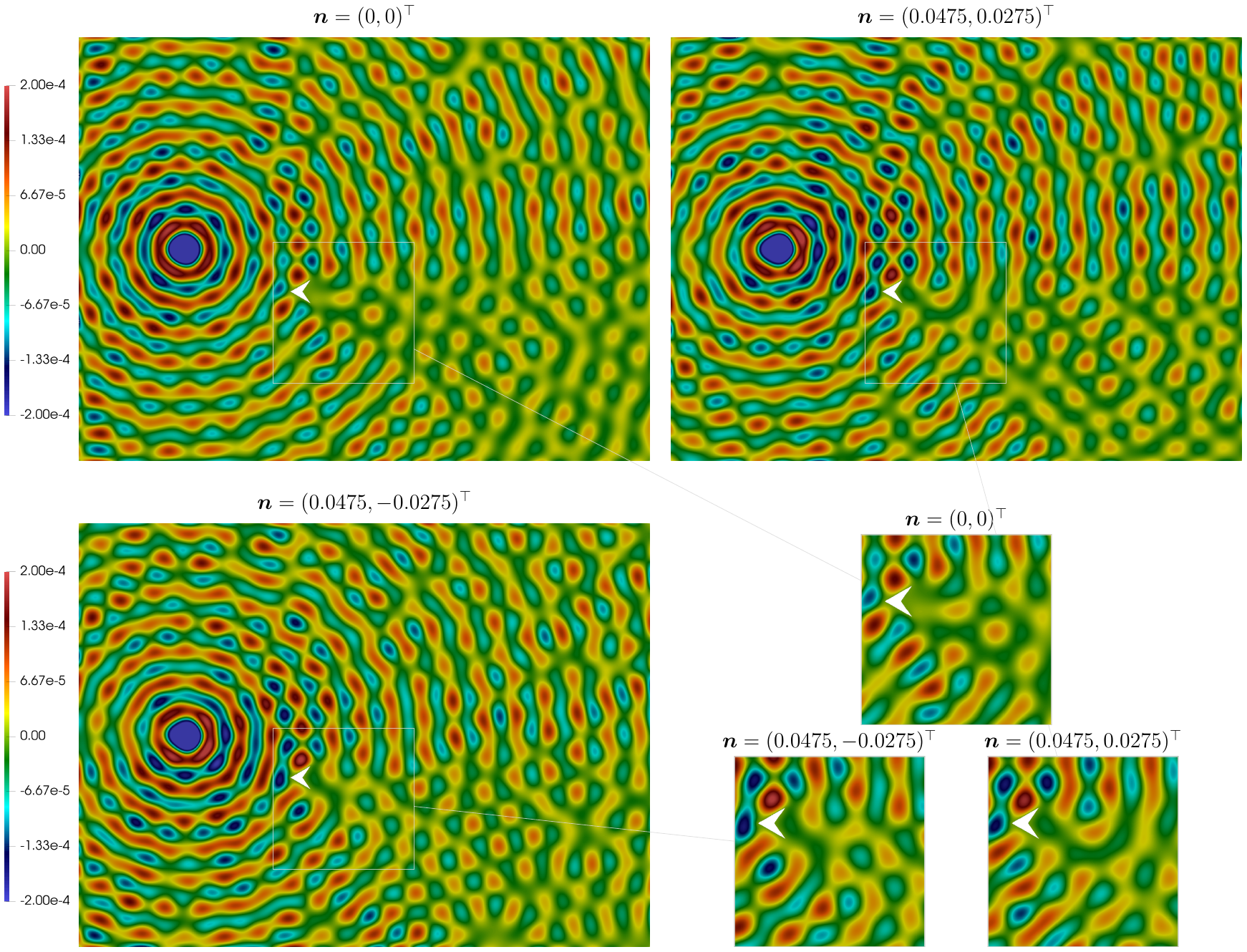}
   \caption{We consider the scattering of a Gaussian pulse by a sound-soft obstacle with the parameters $\alpha = 5 \cdot 10^{-4}$, $\beta = 10^{-4}$ and $k = \theta = 200$. We compute with polynomial degree $p = 9$ on a mesh with mesh size $h \approx 0.02$ (giving $7686$ elements). For the nematic fields $\bm{n} = (0,0)^{\top}$, $\bm{n} = (0.0475,0.0275)^{\top}$, and $\bm{n} = (0.0475,-0.0275)^\top$, we compare the real part of the computed solution and highlight the difference in the respective scattering patterns.}
   \label{fig:numex:NematicScattering}
\end{figure}

\subsection{Anisotropic wave propagation in three dimensions}
The flexibility of the $C^0$-HIP discretization allows us to consider three-dimensional problems.
The goal of this section is to showcase the anisotropic effect of the nematic field already observed in the previous section in the three-dimensional setting. 
We choose $\Omega$ to be a ball of radius $0.85$ centered at the origin and cut out two balls of radius $0.2$ centered at $(0,0.4,0)$ and $(0,-0.1,0)$. Then, we set impedance boundary conditions at the exterior boundary and sound-soft boundary conditions at the interior and consider a Gaussian pulse as a right-hand side
\begin{align*}
   f(\bm{x}) := \sqrt{10^3/\pi} \exp \left( -10^3((x_1-0.4)^2 + x_2^2 + x_3^2) \right).
\end{align*}

\begin{figure}[!htbp]
   \begin{center}
     \includegraphics[width=0.98\textwidth]{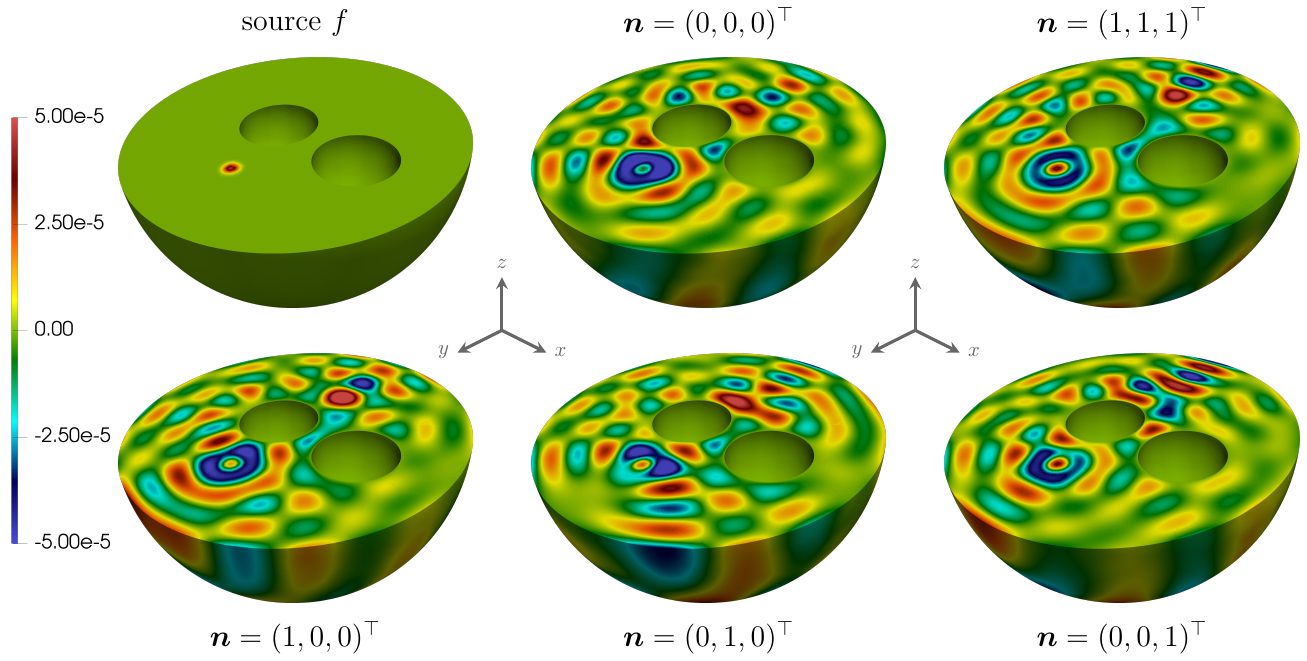}
   \end{center}
   \caption{We compare the real part of the computed solution with parameters $\alpha = 3 \cdot 10^{-2}$, $\beta = 10^{-2}$, and $k = \theta = 135$ for five different nematic fields. The solutions are computed with polynomial degree $p =5$ on a curved mesh with roughly 9500 elements (mesh size $h = 0.1$). The colorbar only applies to the computed solutions, not the picture of the source term.}
   \label{fig:numex:3dScattering}
\end{figure}

\Cref{fig:numex:3dScattering} displays the computed solution with five different nematic fields. The different nematic directors lead to drastically different scattering patterns. For $\bm{n}$ being the three unit vectors, we highlight this in \Cref{fig:numex:3DEvaluation}.

\begin{figure}[!htbp]
   \centering
   \includegraphics[width=0.98\textwidth]{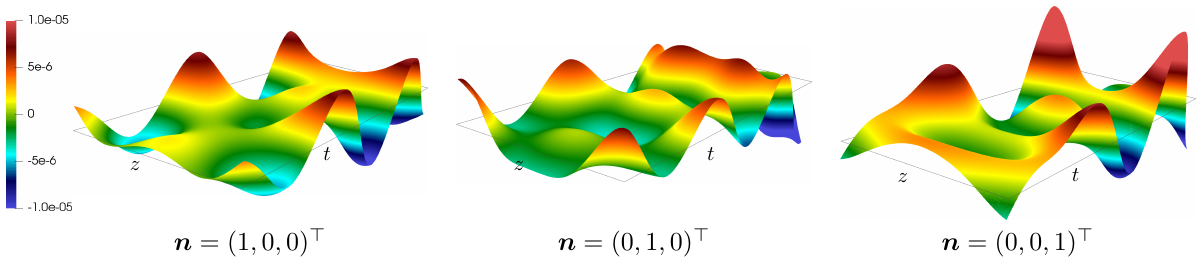}
   \caption{We highlight the difference in the scattering patterns for different nematic fields. For the cases where $\bm{n}$ is one of the three unit vectors, we show evaluations of the solutions measured at the cross-section $(0.7 \sin(t),0.7 \cos(t)) \times [-0.2,0.2]$, $t \in [-\pi/4, \pi/4]$.}
   \label{fig:numex:3DEvaluation}
\end{figure}

\section*{Acknowledgements}
The author would like to thank Christoph Lehrenfeld and Umberto Zerbinati for helpful discussions.

\appendix
\section{Details on \Cref{thm:discreteGarding}}\label{appendix:Tcomp}
For completeness, we state the result from \cite[Thm.~3]{HLS22H1} that implies \Cref{thm:discreteGarding}. In addition to the setting and definitions from \Cref{sec:prelim}, we say that a bounded sesquilinear form $a(\cdot,\cdot) :  V \times V \to \mathbb{C}$ is \emph{T-coercive} \cite{C25Tcoerc}, if there exists a bijection $T \in \mathcal{L}(V)$ such that $a(T \cdot,\cdot)$ is coercive. We call a compact perturbation of a T-coercive problem \emph{weakly T-coercive}. 

The following theorem shows the stability of the discrete problem if the weakly T-coercive structure can be transferred to the discrete level in a suitable manner. We state the result for the associated (sequences of) bounded linear operators and note that the notion of asymptotic consistency from \Cref{def:asympConsistency} reads in operator form as 
\begin{align}
   \lim_{h \to 0} \Vert A_h p_h u - p_h A u \Vert_{V_h} = 0 \qquad \forall u \in V.
\end{align}
Further, we require the following notion of compactness for sequences of operators defined on the finite dimensional spaces $V_h$. 

\begin{definition}[Compactness]\label[definition]{def:SeqCompactness}
   We call a sequence of operators $\seqh{K}$, $K_h \in \mathcal{L}(V_h)$ \emph{compact} if for every uniformly bounded sequence $\seqh{v}$, $v_h \in V_h$, the sequence $\seqh{K_h v}$ has a convergent subsequence.
\end{definition}

\begin{theorem}\label{thm:appendix:compatibility}
   Let $A \in \mathcal{L}(V)$ be bijective and $\seqh{A}$, $A_h \in \mathcal{L}(V_h)$, be asymptotically consistent with $A$. 
   Suppose there exists $\seqh{A}$, $\seqh{T}$, $\seqh{B}$, $\seqh{K}$ and operators $B, T \in \mathcal{L}(V)$ such that 
   \begin{enumerate}[label=(\roman*)]
      \item we have that $A_h T_h = B_h + K_h$, 
      \item $\seqh{T}$ is stable and $\seqh{T}$ is asymptotically consistent with $T$, 
      \item $\seqh{B}$ is stable, $B \in \mathcal{L}(V)$ is bijective and $\seqh{B}$ is asymptotically consistent with $B$,
      \item $\seqh{K}$ is compact, 
   \end{enumerate}
   Then $\seqh{A}$ is asymptotically stable. 
\end{theorem}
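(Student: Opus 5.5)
The statement is the abstract result of \cite[Thm.~3]{HLS22H1}: weak T-coercivity transferred consistently to the discrete level gives asymptotic stability. I would prove it by contradiction, combining the stability of $T_h$ and $B_h$ with the compactness of $K_h$ and the bijectivity of $A$.

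Suppose $\seqh{A}$ is not asymptotically stable. Then there is a subsequence $h \to 0$ and elements $u_h \in V_h$ with $\Vert u_h \Vert_{V_h} = 1$ and $\Vert A_h u_h \Vert_{V_h} \to 0$. Since $T_h$ is stable, hence uniformly invertible, we set $w_h \coloneqq T_h^{-1} u_h$. This gives a uniformly bounded sequence with $\Vert w_h \Vert_{V_h} \ge c > 0$, because $T_h$ is uniformly bounded. By (i),
\[
B_h w_h = A_h u_h - K_h w_h.
\]
By (iv), after passing to a subsequence, $K_h w_h$ converges, say $K_h w_h \overset{\pi_h}{\to} g$ for some $g \in V$; for this we use that discrete convergence in the DAS sense identifies limits in $V$. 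Since $B_h$ is stable and $B$ is bijective, we let $w \coloneqq -B^{-1} g$. Asymptotic consistency of $B_h$ gives $B_h \pi_h w - \pi_h B w \to 0$. Hence
\[
B_h(w_h - \pi_h w) = A_h u_h - K_h w_h + \pi_h g + o(1) \to 0,
\]
and uniform stability of $B_h$ yields $w_h \overset{\pi_h}{\to} w$ strongly. In particular, $\Vert w \Vert_V = \lim \Vert w_h \Vert_{V_h} \ge c$, so $w \neq 0$.

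Next we identify the limit of $u_h$. Set $u \coloneqq T w$. Asymptotic consistency of $T_h$ gives $T_h \pi_h w - \pi_h T w \to 0$. Together with $w_h - \pi_h w \to 0$ and the uniform boundedness of $T_h$, this yields $u_h = T_h w_h \overset{\pi_h}{\to} u$. Now we use the asymptotic consistency of $A_h$ with $A$:
\[
A_h u_h = A_h \pi_h u + A_h(u_h - \pi_h u) = \pi_h A u + o(1),
\]
since $A_h$ is uniformly bounded. Because $A_h u_h \to 0$, we get $\Vert \pi_h A u \Vert_{V_h} \to 0$, and by \eqref{eq:pihCondition} this means $\Vert A u \Vert_V = 0$. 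Injectivity of $A$ then gives $u = 0$. But $T$ is injective, since it is the limit operator of the stable $T_h$; in the application $T$ is bijective. So $u = Tw = 0$ forces $w = 0$, a contradiction.

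The main obstacle is making the compactness step rigorous: we must show that the convergent subsequence of $K_h w_h$ has a limit representable as $\pi_h g$ for some $g \in V$, and that uniform stability plus consistency transfer limits correctly. I would handle this with the DAS completeness property (\cite[Lem.~1]{HLS22H1}), namely that discrete Cauchy/convergent sequences have limits in $V$. For \Cref{thm:discreteGarding} itself, I would choose $T_h = \mathrm{id}$, let $B_h$ be the Riesz operator of $a_h + \widetilde{C}_W(\iota_h\cdot,\iota_h\cdot)_W$, which is uniformly coercive by \eqref{eq:Abstract:DiscrGarding}, and let $K_h$ be the Riesz operator of $-\widetilde{C}_W(\iota_h\cdot,\iota_h\cdot)_W$. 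Its compactness follows from \Cref{ass:InclusionConsistentlyCompact}: weak convergence implies $\iota_h v_h \to Jv$ strongly, so $K_h v_h$ converges to the Riesz representative of $-\widetilde{C}_W(Jv, J\cdot)_W$.
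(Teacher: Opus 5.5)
Your argument is correct. Since the paper gives no proof of its own here and only defers to \cite[Thm.~3]{HLS22H1} and \cite[Lem.~1]{HLS22H1}, yours is a genuinely self-contained alternative. It is a direct contradiction argument that unpacks the standard mechanism of discrete approximation schemes: a stable sequence, perturbed by a compact one and consistent with an injective limit, is asymptotically stable.

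Your route makes explicit which hypotheses are actually used:
\begin{itemize}
\item injectivity of $A$;
\item bijectivity of $B$;
\item uniform boundedness of $A_h$ and $T_h$, needed for $A_h(u_h-\pi_h u)\to 0$ and $T_h(w_h-\pi_h w)\to 0$.
\end{itemize}
The uniform bound is a standing assumption of the framework that the statement leaves implicit. In the application it is supplied by the continuity lemma and by $T_h=\operatorname{id}_{V_h}$. The citation route instead inherits the general machinery without re-deriving it.

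Two simplifications are available. First, the step you call the main obstacle is not one. ``Convergent'' in \Cref{def:SeqCompactness} already means $\pi_h$-convergence to some element of $V$ in the sense of \eqref{eq:abstr:convergence}, so no completeness property is needed. Second, the detour through $\Vert w_h\Vert_{V_h}\ge c$ and the injectivity of $T$ is unnecessary. Your claim there is true: passing to the limit in $\Vert T_h\pi_h v\Vert_{V_h}\ge c\Vert \pi_h v\Vert_{V_h}$ gives $\Vert Tv\Vert_V\ge c\Vert v\Vert_V$. But more directly, $u_h\overset{\pi_h}{\to}u$ together with \eqref{eq:pihCondition} gives $\Vert u\Vert_V=\lim\Vert u_h\Vert_{V_h}=1$, which already contradicts $u=0$.

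Your closing remark on \Cref{thm:discreteGarding} lies outside this statement, but as written it is incomplete in two places. From $\iota_h v_h\to Jv$ you only get weak convergence $K_h v_h\overset{\pi_h}{\rightharpoonup} w$. Strong convergence needs \Cref{ass:InclusionConsistentlyCompact} a second time, applied to the bounded sequence $K_h v_h$, to obtain $\Vert K_h v_h\Vert_{V_h}^2\to\Vert w\Vert_V^2$, as in \Cref{lem:appendix:AssumpImpliesCompactness}. In addition, consistency of $B_h$ with a coercive limit $B$ requires the same constant in front of the compact term at both levels. This is why the paper replaces both constants by $\max\{C_W,\widetilde{C}_W\}$.
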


\begin{proof}
   We refer to \cite[Thm.~3]{HLS22H1} and \cite[Lem.~1]{HLS22H1}.
\end{proof}

By setting $T = \operatorname{id}_V$ and $T_h = \operatorname{id}_{V_h}$, we can apply the result to non-conforming approximations of compact perturbations of coercive operators. To show that we can apply this to the setting from \Cref{lem:GardingsInequality} and \Cref{thm:discreteGarding}, we first show that \Cref{ass:InclusionConsistentlyCompact} implies the compactness of the sequence of operators associated with the perturbation in \eqref{eq:Abstract:DiscrGarding}.

\begin{lemma}\label[lemma]{lem:appendix:AssumpImpliesCompactness}
   Let \Cref{ass:InclusionConsistentlyCompact} be satisfied. Then the sequence of operators $\seqh{K}$, $K_h \in \mathcal{L}(V_h)$, defined by 
   \begin{align*}
      (K_h u_h,v_h)_{V_h} \coloneqq (\iota_h u_h, \iota_h v_h)_{W} \qquad \forall u_h,v_h \in V_h, 
   \end{align*}
   is compact in the sense of \Cref{def:SeqCompactness}.
\end{lemma}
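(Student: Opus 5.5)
Given a uniformly bounded sequence $(v_h)_{h\in\mathcal{H}}$, $v_h \in V_h$, we must find a convergent subsequence of $(K_h v_h)$ in the sense of DAS, i.e.\ an element $z \in V$ with $\Vert K_h v_h - \pi_h z\Vert_{V_h} \to 0$ along the subsequence. Since $(v_h)$ is bounded, the weak compactness of DAS gives a subsequence with $v_h \overset{\pi_h}{\rightharpoonup} v$ for some $v \in V$. \Cref{ass:InclusionConsistentlyCompact} then gives $\iota_h v_h \to Jv$ strongly in $W$. The natural candidate limit is $z := K v$, where $K \in \mathcal{L}(V)$ is defined by $(Ku,w)_V := (Ju, Jw)_W$. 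This is the continuous counterpart of $K_h$ and is well defined because $J$ is bounded.

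We bound $\Vert K_h v_h - \pi_h K v\Vert_{V_h}$ by testing with $e_h := K_h v_h - \pi_h K v$:
\begin{align*}
\Vert e_h\Vert_{V_h}^2 = (\iota_h v_h, \iota_h e_h)_W - (\pi_h K v, e_h)_{V_h}.
\end{align*}
The sequence $(e_h)$ is uniformly bounded, because $\Vert K_h v_h\Vert_{V_h} \le \sup_h\Vert \iota_h\Vert^2 \,\Vert v_h\Vert_{V_h}$ and $\pi_h$ is uniformly bounded. Passing to a further subsequence, $e_h \overset{\pi_h}{\rightharpoonup} e$ for some $e \in V$, and again $\iota_h e_h \to Je$ strongly in $W$.

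We then pass to the limit term by term. For the first term, the product of two strongly convergent sequences in $W$ gives $(\iota_h v_h,\iota_h e_h)_W \to (Jv, Je)_W = (Kv, e)_V$. For the second term we need $(\pi_h w, e_h)_{V_h} \to (w, e)_V$ for fixed $w \in V$. This is exactly the definition of weak convergence $e_h \overset{\pi_h}{\rightharpoonup} e$, after taking complex conjugates. Hence $\Vert e_h\Vert_{V_h}^2 \to (Kv,e)_V - (Kv,e)_V = 0$ along the subsequence, so $K_h v_h \overset{\pi_h}{\to} Kv$. This proves compactness in the sense of \Cref{def:SeqCompactness}.

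The main obstacle is bookkeeping rather than analysis. The two limits are taken along nested subsequences, so we must check that the subsequence extracted for $e_h$ still has $v_h \overset{\pi_h}{\rightharpoonup} v$; it does, because this property is inherited by subsequences. We must also pin down what ``convergent'' means in \Cref{def:SeqCompactness}, namely $\pi_h$-convergence to an element of $V$. If a discrete Cauchy-type notion were intended instead, the same computation applied to $K_h v_h - K_{h'} v_{h'}$ would not be directly available, so we adopt the $\pi_h$-convergence interpretation consistent with \cite{HLS22H1}.
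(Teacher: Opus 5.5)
Your proof is correct and takes essentially the same route as the paper. Both use the same candidate limit $Kv$ (the paper's $w$), apply \Cref{ass:InclusionConsistentlyCompact} a second time to a bounded sequence built from $K_h v_h$, and conclude with a squared-norm argument in $V_h$. The only difference is in that second step. You apply the assumption directly to the error $e_h = K_h v_h - \pi_h K v$ and let its unidentified weak limit cancel, at the cost of one more subsequence extraction. The paper instead first identifies the weak limit of $K_h v_h$ as $w$, using $\iota_h \pi_h \phi \to J\phi$, and then shows $\Vert K_h v_h \Vert_{V_h} \to \Vert w \Vert_V$ before expanding $\Vert K_h v_h - \pi_h w \Vert_{V_h}^2$.
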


\begin{proof}
   Let $\seqh{v}$, $v_h \in V_h$, be uniformly bounded. Passing to a subsequence, there exists $v \in V$ such that $v_h \overset{\pi_h}{\rightharpoonup} v$; due to \Cref{ass:InclusionConsistentlyCompact}, there then exists $J \in \mathcal{L}(V,W)$ such that $\iota_h v_h \to Jv$ strongly in $W$. Let $w \in V$ be defined by $(w,\phi)_{V} = (Jv,J\phi)_{W}$ for all $\phi \in V$. Then, we have that
   \begin{align*}
      (K_h v_h,\pi_h \phi)_{V_h}  = (\iota_h v_h,\iota_h(\pi_h \phi))_{W} \to (Jv,J \phi)_{W} = (w,\phi)_{V} \qquad \forall \phi \in V, 
   \end{align*}
   since $\pi_h \phi \overset{\pi_h}{\to} \phi$. In other words, $\seqh{K_h v} \overset{\pi_h}{\rightharpoonup} w$. By \Cref{ass:InclusionConsistentlyCompact}, this implies that $\iota_h K_h v_h \to Jw$ strongly in $W$ and thus 
   \begin{align*}
      \Vert K_h v_h \Vert^2_{V_h} = (\iota_h v_h, \iota_h K_h v_h)_{W} \to (Jv,Jw)_{W} = (w,w)_{V} = \Vert w \Vert^2_{V}.
   \end{align*}
   Therefore, we obtain the compactness of $\seqh{K}$ in the sense of \Cref{def:SeqCompactness}, since
   \begin{align*}
      \Vert K_h v_h - \pi_h w \Vert^2_{V_h} = \Vert K_h v_h \Vert^2_{V_h} - 2 \Re (K_h v_h, \pi_h w)_{V_h} + \Vert \pi_h w \Vert^2_{V_h} \to 0.
   \end{align*}
\end{proof}

This allows us to prove \Cref{thm:discreteGarding}. 

\begin{proof}[Proof of \Cref{thm:discreteGarding}]
   By assumption, the continuous operator $A \in \mathcal{L}(V)$ can be written as $A = B + K$, where $B \in \mathcal{L}(V)$ is coercive and $K \in \mathcal{L}(V)$ is compact, and the sequence of discrete operators $\seqh{A}$ may be written as $A_h = B_h + K_h$, where $\seqh{B}$ is uniformly coercive due to \eqref{eq:Abstract:DiscrGarding} and $\seqh{K}$ is compact in the sense of \Cref{def:SeqCompactness} by \Cref{lem:appendix:AssumpImpliesCompactness}. Replacing the constants in front of $\Vert \cdot \Vert^2_{W}$ in \eqref{eq:Abstract:Garding} and \eqref{eq:Abstract:DiscrGarding} by $\max\{C_W,\tilde{C}_W\}$, we obtain that $K_h$ is asymptotically consistent with $K$ and that $B_h$ is asymptotically consistent with $B$. Thus, \Cref{thm:appendix:compatibility} yields the asymptotic stability of $\seqh{A}$.
\end{proof}

\bibliography{main}

\end{document}